\documentclass[onefignum,onetabnum]{siamart171218}
\pdfoutput=1
\usepackage[top=1.4in,bottom=1.275in,left=1.4in,right=1.4in]{geometry}

\usepackage[utf8]{inputenc}
\usepackage[T1]{fontenc}
\usepackage[english]{babel}\usepackage{csquotes}\usepackage{microtype}\usepackage{hyphenat}\usepackage[expproduct=\cdot,binary-units=true]{siunitx}

\usepackage{amsfonts}
\usepackage{amssymb}
\usepackage{amscd}
\usepackage{mathtools}
\usepackage[mathscr]{eucal}
\usepackage{calligra}
\DeclareMathAlphabet{\mathcalligra}{T1}{calligra}{m}{n}
\DeclareFontShape{T1}{calligra}{m}{n}{<->s*[1.1]callig15}{}
\usepackage{bm}
\usepackage{autonum}\usepackage[nice]{nicefrac}

\usepackage{tikz}
\usepackage{pgfplots}
\usepackage{pgfplotstable}
\pgfplotsset{compat=1.5}

\usepackage{comment}
\usepackage{graphicx}
\usepackage[colorinlistoftodos,prependcaption,textsize=small]{todonotes}

\usepackage{cleveref}
\crefname{equation}{}{}
\usepackage{csvsimple}
\usepackage{booktabs}
\usepackage{environ}
\usepackage{bookmark}

\newtheorem{remark}{Remark}[section]

\let\inf\relax \DeclareMathOperator*\inf{\vphantom{p}inf}
\let\min\relax \DeclareMathOperator*\min{\vphantom{p}min}
\let\max\relax \DeclareMathOperator*\max{\vphantom{p}max}
\let\subset\relax \DeclareMathOperator{\subset}{\subseteq}

\DeclareMathOperator*{\supp}{supp}
\DeclareMathOperator*{\argmin}{arg\,min}

\let\div\relax \DeclareMathOperator{\div}{div}

\let\tilde\widetilde
\newcommand{\sspace}{\hspace{0.25pt}}
\newcommand{\msspace}{\hspace{-0.25pt}}

\newcommand{\weaksol}[1]{#1}
\newcommand{\fesol}[1]{{#1}_{h}}
\newcommand{\approxsol}[1]{{\tilde{#1}}_{h}}
\newcommand{\norm}[2][]{\left\|#2\right\|_{#1}}
\newcommand{\honenorm}[1]{\norm[1]{#1}}

\renewcommand{\a}{a}
\newcommand{\asurr}{\tilde{a}}
\newcommand{\poly}{\mathcal{P}}

\newcommand{\R}{\mathbb{R}} \newcommand{\N}{\mathbb{N}} \newcommand{\Z}{\mathbb{Z}}     \newcommand{\mesh}{\mcT} \newcommand{\dd}{\,\mathrm{d}} \newcommand{\bdry}{\partial}    \newcommand{\ext}{{\mathrm{ext}}}

\newcommand{\sfc}{\mathsf{c}}
\newcommand{\sfd}{\mathsf{d}}

\newcommand{\sff}{\mathsf{f}}

\newcommand{\sfu}{\mathsf{u}}

\newcommand{\sfx}{\mathsf{x}}

\newcommand{\sfA}{\mathsf{A}}
\newcommand{\sfB}{\mathsf{B}}
\newcommand{\sfC}{\mathsf{C}}
\newcommand{\sfD}{\mathsf{D}}

\newcommand{\sfM}{\mathsf{M}}
\newcommand{\sfN}{\mathsf{N}}

\newcommand{\mcI}{\mathcal{I}}

\newcommand{\mcP}{\mathcal{P}}

\newcommand{\mcS}{\mathcal{S}}
\newcommand{\mcT}{\mathcal{T}}

\newcommand{\mcZ}{\mathcal{Z}}

\newcommand{\scD}{\mathscr{D}}

\newcommand{\bbL}{\mathbb{L}}

\newcommand{\bbX}{\mathbb{X}}

\newcommand{\bbZ}{\mathbb{Z}}

\newcolumntype{?}{!{\vrule width 1.2pt}}

\makeatletter
\newsavebox{\measure@tikzpicture}
\NewEnviron{scaletikzpicturetowidth}[1]{	\def\tikz@width{#1}	\def\tikzscale{1}\begin{lrbox}{\measure@tikzpicture}		\BODY
	\end{lrbox}	\pgfmathparse{#1/\wd\measure@tikzpicture}	\edef\tikzscale{\pgfmathresult}	\BODY
}
\makeatother

\definecolor{color1}{rgb}{0, 0.4470, 0.7410}
\definecolor{color2}{rgb}{0.8500, 0.3250, 0.0980}
\definecolor{color3}{rgb}{0.9290, 0.6940, 0.1250}
\definecolor{color4}{rgb}{0.4940, 0.1840, 0.5560}
\definecolor{color5}{rgb}{0.4660, 0.6740, 0.1880}
\definecolor{color6}{rgb}{0.3010, 0.7450, 0.9330}
\definecolor{color7}{rgb}{0.6350, 0.0780, 0.1840}

\title{The surrogate matrix methodology: {a priori} error estimation}

\author{Daniel~Drzisga\thanks{Lehrstuhl für Numerische Mathematik, Fakultät für Mathematik (M2), Technische Universität München, Garching bei München (\email{drzisga@ma.tum.de}, \email{keith@ma.tum.de}, \email{wohlmuth@ma.tum.de})}
\and Brendan~Keith\footnotemark[1]
\and Barbara~Wohlmuth\footnotemark[1]}

\begin{document}

\maketitle

\begin{abstract}
We give the first mathematically rigorous analysis of an emerging approach to finite element analysis (see, e.g., Bauer et al. [Appl. Numer. Math., 2017]), which we hereby refer to as \emph{the surrogate matrix methodology}.
This methodology is based on the piece-wise smooth approximation of the matrices involved in a standard finite element discretization.
In particular, it relies on the projection of smooth so-called stencil functions onto high-order polynomial subspaces.
The performance advantage of the surrogate matrix methodology is seen in constructions where each stencil function uniquely determines the values of a significant collection of matrix entries.
Such constructions are shown to be widely achievable through the use of locally-structured meshes.
Therefore, this methodology can be applied to a wide variety of physically meaningful problems, including nonlinear problems and problems with curvilinear geometries. Rigorous \textit{a priori} error analysis certifies the convergence of a novel surrogate method for the variable coefficient Poisson equation.
The flexibility of the methodology is also demonstrated through the construction of novel methods for linear elasticity and nonlinear diffusion problems.
In numerous numerical experiments, we demonstrate the efficacy of these new methods in a matrix-free environment with geometric multigrid solvers.
In our experiments, up to a twenty-fold decrease in computation time is witnessed over the classical method with an otherwise identical implementation.
\end{abstract}

\begin{keywords}
  Surrogate numerical methods, finite element methods, matrix-free, high performance computing, a priori analysis, low order, geometric multigrid.
\end{keywords}

\begin{AMS}
            65D05,
  65M60,
  65N30,
  65Y05,
  65Y20.
\end{AMS}

\section{Introduction} \label{sec:introduction}

In the field of computational science, major funding initiatives in North America, Europe, and Asia have thrust high performance computing (HPC) to ascendancy.
In anticipation of future exascale computers, much work in this discipline involves the deep and careful reconstruction of long-established computing practices.
An important characteristic of numerical algorithms aimed for these computers, is the floating-point operation (FLOP) per byte ratio.
In order to achieve optimal performance and power efficiency on future machines, the time spent on FLOPs relative to memory transfer needs to be substantial.

Most traditional finite element softwares assemble global stiffness matrices by looping over elements and adding the corresponding local contributions to the global matrix.
Storing the resulting sparse matrices requires significantly more memory than just storing the degrees of freedom.
However, memory consumption is certainly not the only obstacle at the computational frontier.
Indeed, at such scales, the memory traffic and latency involved in loading indices and entries for matrix vector products (MVPs) also presents critical challenges.

Since iterative solvers only require MVPs, it is not necessary to store all of the nonzeros of the global matrix in memory.
Instead, it is sufficient to compute the nonzero entries on-the-fly, i.e., matrix-free.
Different tactics exist to implement matrix-free methods, but the predominant candidate for low-order finite elements is the
element-by-element approach \cite{Arbenz:2008:IJMME,Bielak:2005:CMES,Carey:1986:CANM,Flaig:2012:LSSC,Rietbergen:1996:IJMME}, wherein local stiffness matrices are multiplied by local vectors and later added to the global solution vector.
These local stiffness matrices may either be stored in memory\textemdash{}which actually requires {more} memory than storing the global matrix\textemdash{}or computed on-the-fly. When using high-order finite elements, the weak forms can be integrated on-the-fly using standard or reduced quadrature formulas \cite{Brown:2010:JSC,Kronbichler:2012:CAF,Ljungkvist:2017:HPC17,Ljungkvist:2017:TechRep,May:2015:CMAME}.
This is a well-suited tactic for future machines because of its large arithmetic intensity \cite{loffeld2017arithmetic}.

Significant performance gains are often attributed to a problem, scale, and architecture-specific balance between FLOPs and memory traffic.
As a matter of course, exploiting symmetries in a problem or discretization can significantly improve the time to solution.
This is often the cause of enormous speed-ups in computations on structured meshes.
Likewise, high performance of matrix-free methods can be most easily achieved in homogeneous problems with simple geometries.
Nevertheless, most geometries coming from significant real-world problems cannot be adequately approximated by fully-structured meshes.
A possible trade-off is to use locally-structured meshes like hierarchical hybrid grids (HHG) where initially unstructured coarse grids are locally refined in a uniform way.
This local structure allows the application of stencil-based finite element procedures which operate similar to finite difference methods.
By using these grids, efficient stencil-based methods have been successfully applied to a wide range of problems \cite{bergen2005hierarchical,bergen2004hierarchical,bergen2007hierarchical,Engwer:2017:CCPE,gmeiner2015towards}.
A related approach, suitable for low-order finite element discretizations of scalar elliptic partial differential equations (PDEs) with variable coefficients, based on scaling of reference stencils is discussed in \cite{bauer2017stencil}.

In this paper, we revisit the classical lowest-order Bubnov--Galerkin finite element method and analyze a modification of it which is strongly amenable to stencil-based matrix-free computation.
In our approach, a macro-mesh, which is not required to have any global structure, is used to triangulate the model geometry.
This macro-mesh is then uniformly refined a large number of times, resulting in a fine-scale locally-structured mesh.
For each macro-element, a \emph{local approximation} of the fine-scale global matrix delivers a fine-scale \emph{surrogate matrix} which maintains the convergence properties of the fine-scale discrete solution, up to the original order of the approximation.
A related investigation \cite{bauer2017two} illustrated the promise of this methodology and provided numerical evidence for the convergence rates which are proven here rigorously.
This work considered only Poisson's equation.
Later on, Stokes flow (with variable viscosity) was considered in two follow-up articles \cite{bauer2018large,bauer2018new}.
Each of these initial studies focused on the massively parallel high performance computing aspects of their respective methods.
These studies used the HHG software framework \cite{bergen2005hierarchical,bergen2004hierarchical,bergen2007hierarchical} in their experiments.
Here, the finite-element software framework HyTeG \cite{Kohl2018HyTeGfinite} is used.

In this paper, we recast the central features of the original work as a methodology complete with a mathematical framework suitable for rigorous analysis.
The principal novelty is the mathematical foundation developed here, which can be used to analyze further incarnations of the methodology.
In total, we consider three specific mathematical models; namely, the variable coefficient Poisson equation, linear elastostatics, and $p$-Laplacian diffusion.
Although our presentation demonstrates that the surrogate matrix methodology applies to each of these models equally well, we only employ a complete \textit{a priori} analysis of the simplest model, the variable coefficient Poisson equation.
In our numerical experiments, we carefully verify the proven \textit{a priori} convergence rates with the variable coefficient Poisson equation.
We also include proof-of-concept demonstrations from numerical experiments with the linear elastostatics and $p$-Laplacian diffusion problems.

\section{Notation and outline} \label{ssub:overview}

Let $V$ be a reflexive Banach space over $\R$, the field of real numbers, and let $V_h\subsetneq V$ be a finite-dimensional subspace.
Consider a continuous and weakly coercive bilinear form $a:V\times V \to \R$ and a bounded linear functional $F\in V^\ast$, the topological dual of $V$.

In this paper, we are concerned with the solutions $u$, $u_h$, and $\tilde{u}_h$ of the following three abstract variational problems.
\begin{subequations}
\label{eq:VariationalFormulations}
\begin{alignat}{3}
	&
	\text{Find } u \in V \text{ satisfying }
	&
	\quad
	a(u,v)
	&=
	F(v)
		\quad
	&&\text{for all } v\in V
	\,.
\label{eq:ContinousVF}
	\\
	&
	\text{Find } u_h \in V_h \text{ satisfying }
	&
	\quad
	a(u_h,v_h)
	&=
	F(v_h)
		\quad
	&&\text{for all } v_h\in V_h
	\,.
\label{eq:DiscreteVF}
	\\
	&
	\text{Find } \tilde{u}_h \in V_h \text{ satisfying }
	&
	\quad
	\tilde{a}(\tilde{u}_h,v_h)
	&=
	F(v_h)
		\quad
	&&\text{for all } v_h\in V_h
	\,.
\label{eq:SurrogateVF}
\end{alignat}
\end{subequations}
In~\cref{eq:SurrogateVF}, a \emph{surrogate} bilinear form $\tilde{a}:V_h\times V_h \to \R$ has been introduced.
In order to properly define $\tilde{a}(\cdot,\cdot)$, some additional assumptions on $a(\cdot,\cdot)$ are still required; see \Cref{sec:surrogate_stiffness_matrix}.

The discrete variational problems~\cref{eq:DiscreteVF,eq:SurrogateVF} induce matrix equations for coefficients $\sfu,\tilde{\sfu}$ in some $\R^N$,
\begin{equation}
	\sfA\sfu = \sff\qquad \text{ and } \qquad\tilde{\sfA}\tilde{\sfu} = \sff,
\label{eq:DiscreteVariationalProblems}
\end{equation}
respectively.
In the first case, fix a basis for $V_h$, say $\{\phi_i\}_{i=1}^N$.
For this basis, each $(i,j)$-component of the stiffness matrix $\sfA$ is simply $\sfA_{ij} = a(\phi_j,\phi_i)$.
In the following section, we present a methodology to construct a \emph{surrogate} stiffness matrix $\tilde{\sfA}\approx \sfA$ which can be used in place of the \emph{true} stiffness matrix $\sfA$.
This methodology stands apart from technical details, such as differences in quadrature formulas.
\Cref{sec:examples} provides a short (non-comprehensive) list of examples fitting into our framework.
In \Cref{sec:the_zero_row_sum_property}, we discuss the incorporation of non-homogeneous boundary conditions and what we hereon refer to as \emph{the zero row sum property}.
In \Cref{sec:stability_of_the_surrogate_operator}, sufficient conditions for the discrete stability of surrogate bilinear forms $\tilde{a}(\cdot,\cdot)$ are briefly discussed.
Next, in \Cref{sec:a_priori_error_estimation}, we perform a rigorous \emph{a priori} error analysis of our approach applied to the variable coefficient Poisson equation.
A brief description of our implementation is given in \Cref{sec:implementation}.
Then, in \Cref{sec:numerical_experiments}, we document several numerical experiments.
Here, a thorough verification of each error estimate in \Cref{sec:a_priori_error_estimation} is given.
This is complemented by performance measurements for the additional examples.

Throughout this article, we assume that $\Omega\subset\R^n$ is a bounded Lipschitz domain.
For matrices $\sfM \in \R^{l\times m}$, define the $\ell^\infty$-, and ${\max}$-norms, $\|\sfM\|_\infty = \max_{i}\sum_{j}|\sfM_{ij}|$ and $\|\sfM\|_{\max} = \max_{i,j}|\sfM_{ij}|$.
Likewise, for any function $v:\Omega\to \R$, we will use the similar notation, $\|v\|_0$, $\|v\|_1$, and $\|v\|_2$, for the canonical $L^2(\Omega)$-, $H^1(\Omega)$-, and $H^2(\Omega)$-norms, respectively.
When dealing with a subset $T\subset\Omega$, denote the related $L^2(T)$-, $H^1(T)$-, and $H^2(T)$-norms by $\|v\|_{0,T}$, $\|v\|_{1,T}$, and $\|v\|_{2,T}$, respectively.
For any simplex $T$ and integer $0\leq q <\infty$, we denote the space of polynomials of degree at most $q$ as $\mcP_q(T)$. 
All remaining notation will be defined as it arises.

\section{Surrogate stiffness matrices} \label{sec:surrogate_stiffness_matrix}

In this section, we present the constitutive elements of the surrogate matrix methodology.
Our approach here is to gradually introduce the necessary concepts, all the while maintaining a clear sense of generality.
In order to arrive at a tractable framework for our problems of interest, we gradually refine the presentation from general $n$-dimensional spaces to only $n=1,2$ or $3$ and from general Banach spaces to only $W^{1,p}(\Omega)$ (or products thereof), where $1<p<\infty$.
The intention of proceeding in this way is to indicate that the methodology can be applied to an extremely broad set of problems and, specifically, to most problems where finite element methods are traditionally applied.

\subsection{Preliminary assumptions} \label{sub:assumptions_and_preliminaries}

Given a bounded domain $\Omega\subset\R^n$, assume that the true bilinear form can be expressed as
\begin{subequations}
\begin{equation}
	a(u,v) = \int_\Omega G(x,u(x),v(x)) \dd x
	\qquad \text{for all } u, v\in V.
\label{eq:BilinearFormStructure}
\end{equation}
Additionally, upon defining $\supp(u) = \{ y \in \Omega : u(y) \neq 0 \}$ for smooth functions, make the following sparsity assumption:
\begin{equation}
	G(x,u(y),v(y)) = 0
	\qquad
	\text{whenever }
			y\notin\supp(u)\cap\supp(v)
		.
\label{eq:SupportAssumption}
\end{equation}
\end{subequations}

These assumptions permit us to consider the discretization of most classical differential operators.
Indeed, in the assumptions above, the integrand $G(x,u,v)$ may induce distributional derivatives on its second and third arguments.
Meanwhile, the first argument can be identified with the spatial argument of any associated variable coefficients.
For example, in the weak form of a Poisson-type equation, $-\div(K\nabla u) = f$, with a variable, symmetric positive-definite diffusion tensor $K(x)$ (cf.~\Cref{sub:poisson_s_equation}), we have the bilinear form
\begin{equation}
	a_1(u,v)
	=
	\int_\Omega \nabla u(x)^\top K(x)\sspace \nabla v(x) \dd x
		\qquad \text{for all } u, v\in V=H^1(\Omega)
	\,.
\label{eq:BilinearFormPoisson}
\end{equation}
Here, taking any point $x\in\Omega$, the integrand in~\cref{eq:BilinearFormStructure} reduces to $G(x,u,v) = {\nabla u^\top K(x)\sspace \nabla v}$.
Evidently, this $G$ satisfies the sparsity assumption~\cref{eq:SupportAssumption}.

\subsection{Stencil functions} \label{sub:stencil_functions}

Let $\phi\in V$ be a test function with compact support in $\Omega$ and, for any fixed $y\in \R^n$, define $\phi_y(x) = \phi(x-y)$.
Now, consider any fixed set of ordered points $\bbX = \{x_i\}$ in $\Omega$ and recall~\cref{eq:BilinearFormStructure}.
Assuming that both $\phi_{x_i},\phi_{x_j}\in V$, observe (via a simple change of variables) that
\begin{equation}
	\begin{aligned}
						a(\phi_{x_j},\phi_{x_i})
				=
		\int_\Omega G(y,\phi_{x_j}(y),\phi_{x_i}(y)) \dd y
												&=
		\int_{\Omega_{\delta}} \!\!G(x_i+y,\phi_{\delta}(y),\phi(y)) \dd y
		\,,
	\end{aligned}
\label{eq:ChangeOfVars}
\end{equation}
where $\delta = x_j-x_i$ and $\Omega_{\delta} = \supp(\phi)\cap\supp(\phi_{\delta})$.
In the second equality, passing from an integral over $\Omega$ to an integral over the subset $\Omega_{\delta}\subset \Omega$ follows immediately from the sparsity assumption~\cref{eq:SupportAssumption}. 
For each fixed $x_i$, the affine structure of the identity above may be illuminated by collecting each contributing translation $\delta$ into the set $\scD(x_i) = \{x_j-x_i \,:\, x_j\in\bbX,\, a(\phi_{x_j},\phi_{x_i}) \neq 0\}$ and defining a \emph{stencil function}
\begin{equation}
	\Phi_i^\delta(x)
		=
	\int_{\Omega_\delta} G(x+y,\phi_{\delta}(y),\phi(y)) \dd y
		\qquad
	\text{for each } \delta \in \scD(x_i).
	\label{eq:StencilFunction}
\end{equation}

We have just reduced the computation of any $a(\phi_{x_j},\phi_{x_i})$ to the evaluation of scalar-valued functions enumerated by affine coordinates $(x_i,x_j-x_i)$.
Indeed,
\begin{equation}
	a(\phi_{x_j},\phi_{x_i})
	=
	\begin{cases}
	\Phi_i^\delta(x_i)
		\,,\quad & \text{if } \delta = x_j-x_i \in\scD(x_i),\\
	0\,, & \text{otherwise.}
	\end{cases}
	\label{eq:StencilFunction2StiffnessMatrixGENERAL}
\end{equation}
In the present scenario, there may be a different set of translations $\scD(x_i)$ for every point $x_i$.
However, if each point is drawn from a point lattice, most of the sets $\scD(x_i)$ are identical.
This observation is the subject of the following subsection and a foundational principle in our approach.

\begin{remark}
\label{rem:Symmetry}
	In the scenario that the bilinear form is symmetric, $a(u,v) = a(v,u)$, it is natural to assume that the stencil functions~\cref{eq:StencilFunction} will inherit a similar symmetry.
													Indeed, under the equivalent symmetry condition $G(x,u,v) = G(x,v,u)$ almost everywhere, one may easily verify that if $\delta = x_j - x_i$, then
																\begin{equation}
		\Phi^\delta_i(x_i)
		=
		\int_{\Omega_{\delta}} \!\!G(x_i+y,\phi(y),\phi_{\delta}(y)) \dd y
		=
				\int_\Omega G(x_j+y,\phi_{-\delta}(y),\phi(y)) \dd y
		=
		\Phi^{-\delta}_j(x_j)
									\label{eq:SymmetricStencilFunction2StiffnessMatrixGENERAL}
	\end{equation}
	or, equivalently, $\Phi^\delta_i(x_i) = \Phi^{-\delta}_j(x_i + \delta)$.
													\end{remark}

\subsection{Local stencil functions and locally-structured meshes} \label{sub:stencil_functions_on_uniformly_structured_meshes}

An affine point lattice $\bbL$, from here on referred to only as a \emph{lattice}, is a regularly spaced array of points in $\R^n$ where every point $x_i\in\bbL$ belongs to a neighborhood containing no other points in $\bbL$.
In this paper, each (possibly finite) lattice is determined by a finite linearly independent set of translations in $\R^n$; i.e., $\bbL \subset \{ \delta_0 + a_1 \delta_1 + \cdots + a_l \delta_l \,:\, a_1,\ldots,a_l\in\bbZ\}$.

Assuming that the test function $\phi\in V$ is sufficiently localized and each point $x_i$ is drawn from a lattice $\bbL\subset\Omega$, then each $\scD(x_i)$ is a subset of a small number of admissible translations $\scD(\bbL) = \bigcup\{\scD(x_i): x_i\in\bbL\}$, determined solely by the lattice structure.
In such a scenario, every stencil function is closely related; i.e., $\Phi_i^\delta(x) = \Phi_j^\delta(x)$, whenever both are defined.
Therefore, it is prudent to drop the subscript and define only one common stencil function $\Phi^\delta(x)$ for each $\delta \in \scD(\bbL)$.
Clearly,
\begin{equation}
	a(\phi_{x_j},\phi_{x_i})
	=
	\begin{cases}
	\Phi^\delta(x_i)\,,\quad & \text{if } \delta = x_j-x_i \in\scD(\bbL),\\
	0\,, & \text{otherwise.}
	\end{cases}
	\label{eq:StencilFunction2StiffnessMatrixSTRUCTURED}
\end{equation}

We are interested in exploiting \cref{eq:StencilFunction2StiffnessMatrixSTRUCTURED} for solving a wide variety of PDEs with curvilinear geometries.
Toward this end, the following examples help motivate our construction further.

\begin{remark}
\label{rem:ReflexiveSobolevSpaces}
From now on, it is useful to let $V$ be a closed subset of $W^{1,p}(\Omega)$, for some $1<p<\infty$.
This will allow us to use a basis for $V_h \subset V$ consisting of finite element vertex functions \cite{ern2013theory}.
Other problems where $V\subset \big[W^{1,p}(\Omega)\big]^k$, $k\in\N$, can be handled similarly (see, e.g., \Cref{sub:linearized_elasticity}), by employing a basis consisting of the same vertex functions in each component.
\end{remark}

\subsubsection{The one-dimensional setting} \label{ssub:the_one_dimensional_setting}

Let $V=H^1_0(\Omega)$, where $\Omega=(0,1)\subset\R$, and fix a small translation $dx = 1/(N+1)$.
Consider the scenario where each point, ${x_i = x_{i-1} + dx}$, evenly divides $\Omega$ and $\phi$ is the piecewise-linear hat function defined $\phi(x) = \max(1-\frac{|x|}{dx},0)$.
Let $V_h = \{v \in H^1_0(\Omega) : v|_t \in \mcP_1(t),\,t = (x_i, x_{i+1}), \text{ for each } 1\leq i\leq N \}$ and identify each shifted hat function with the standard basis, $\phi_{x_i} = \phi_i\in V_h$.
Here, we may define $\bbL = \{x_i\}$.
In this case, for each $i\geq 1$, the value $a(\phi_i,\phi_j)$ can either be computed directly from \eqref{eq:BilinearFormStructure}, in the standard way, or evaluated using~\cref{eq:StencilFunction2StiffnessMatrixSTRUCTURED}, assuming that each $\Phi^\delta$ is available at the onset of computation.
Note that $\scD(x_1) = \{0,dx\}$ and $\scD(x_N) = \{-dx,0\}$, but $\scD(x_i) = \{-dx,0,dx\}$ for each $2\leq i \leq N-1$.
Therefore, $\scD(\bbL) = \{-dx,0,dx\}$.
Ultimately, defining each structured stencil function $\Phi^\delta(x) = \Phi((x_i,\delta);x)$ from an arbitrary candidate point $x_i$, one may verify that
\begin{equation}
	a(\phi_j,\phi_i)
	=
	\begin{cases}
	\Phi^\delta(x_i)\,,\quad & \text{if } \delta = x_j-x_i \in \{dx,0,dx\},\\
	0\,, & \text{otherwise,}
	\end{cases}
\end{equation}
which is clearly the same format as~\cref{eq:StencilFunction2StiffnessMatrixSTRUCTURED}.

Recall \cref{rem:Symmetry}.
If $a(\cdot,\cdot)$ is symmetric, then, by~\cref{eq:SymmetricStencilFunction2StiffnessMatrixGENERAL}, $\Phi^{dx}(x_i) = \Phi^{-dx}(x_i+dx)$ and the number of required stencil functions can be reduced to two.
In some situations\textemdash{}e.g., when the zero row sum property can be employed (see \Cref{sec:the_zero_row_sum_property})\textemdash{}only a single stencil function is actually required.

\subsubsection{Locally-structured meshes with triangles} \label{ssub:hierarchical_hybrid_grids_with_triangles}
Let $m\in\N$.
Beginning with a scaled Cartesian lattice $\bbL_m = 2^{-m}\Z^n$, it is useful to define its intersection with the closure of the right-angled reference simplex $\hat{T} = \{\hat{x}\in\R^n : \|\hat{x}\|_1 < 1,\, \hat{x}\cdot e_i > 0,\,\forall i=1,\ldots,n\}$.
This simplicial lattice, $\hat{T}_m = \bbL_m \cap \overline{\hat{T}}$, can easily be transformed into a similar simplicial lattice $T_m$ for any arbitrary simplex $T\subset\Omega$ via an affine transformation (see, e.g., \Cref{fig:Subdivision}).
Indeed, fixing the unique $A\in\R^{n\times n}$ and $b\in\R^{n}$ such that $T = \{A\hat{x}+b : \hat{x}\in\hat{T}\}$, the corresponding lattice is clearly $T_m = \{ A\hat{x}_i+b : \hat{x}_i\in \hat{T}_m\}$.
Note that there are no interior points, $\mathring{T}_m = T_m\cap T = \emptyset$, if $m<2$.
We also define the set of boundary points $\bdry T_m = T_m \setminus \mathring{T}_m$, which is always non-empty.

Let $\Omega\subset\R^n$, where $n=2,3$.
The utility of this transformation is evident upon considering a \emph{macro}-triangulation of $\Omega$, say $\mesh_H$, where each \emph{macro}-element $T\in\mesh_H$ is endowed with a simplicial lattice $T_m$, as defined above.
Here, $H = \max_{T\in\mesh_H} H_T$, where each $H_T = \mathrm{diam}(T)$ denotes the diameter of $T$.
Notice that, for any fixed level $m\geq0$, all interface points $x_i\in \bdry T_m\cap\Omega$ are coincident with an interface lattice point on some neighbouring simplex.
We may now define the set of all vertices on level $m$: $$\bbX_m = \bigcup \{T_m:T\in\mesh_H\}.$$

For every $\bbX_m$, there is a corresponding finite element mesh such that each vertex function within a fixed macro-element $T\in\mesh_H$ is self-similar.
In the cases $n=2$ or $3$, we are left to define each triangle or tetrahedron whose vertices coincide with points in $\bbX_m$.
Let $[y_0,y_1,\ldots,y_k]\subset\R^n$ denote the convex combination of the points $y_0,y_1,\ldots,y_k\in\Omega$.
When $n=2$, the natural construction begins by considering the following uniform subdivision of a triangle $T = [y_0,y_1,y_2] \in\mesh_H$ into a set of four equal-volume triangles:
\begin{equation}
	\mcS(T)
	=
	\big\{[y_0,\tfrac{y_0+y_1}{2},\tfrac{y_0+y_2}{2}],[\tfrac{y_0+y_1}{2},y_1,\tfrac{y_1+y_2}{2}],[\tfrac{y_0+y_2}{2},\tfrac{y_1+y_2}{2},y_2],[\tfrac{y_0+y_1}{2},\tfrac{y_1+y_2}{2},\tfrac{y_0+y_2}{2}]\big\}
	.
\end{equation}
For an illustration of this $n=2$ case, see \cref{fig:Subdivision}.
For $n=3$, see the construction in \cite{bey1995tetrahedral}.
Further subdivisions can then be defined recursively, \textit{viz.},
\begin{equation}
	\mcS^{m+1}(T)
	= 
	\bigcup\{\mcS(t) : t \in \mcS^{m}(T)\}
		\qquad \text{ for all } m\geq 1\,.
\end{equation}

\begin{figure}\centering
\includegraphics[width=\textwidth]{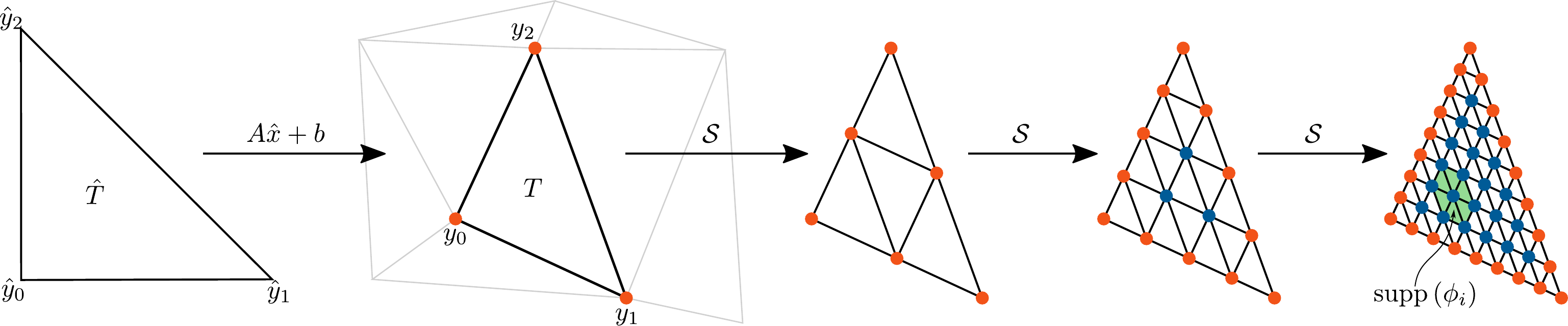}
\caption{\label{fig:Subdivision}Illustration of three refinement steps of a single macro-element $T$ for $n=2$ with the corresponding vertex lattices $T_0$, $T_1$, $T_2$, and $T_3$. The interior lattice points $\mathring{T}_m$ are colored blue and the boundary lattice points $\partial T_m = T_m \setminus \mathring{T}_m$ are colored in orange. Additionally, the support of an exemplary vertex function $\phi_i$ is shaded in green.}
\end{figure}

The set of all vertices in a given subdivision $\mcS^{m}(T)$ forms an evenly spaced set of points inside $T$.
The set of vertices in $\mcS^{1}(T)$ clearly coincides with $T_1$ and one can easily verify from the recursive definition that the set of vertices in $\mcS^{m}(T)$ also coincides with $T_m$.
We may finally define the sequence of \emph{locally-structured meshes}:
\begin{equation}
	\mcS^m(\mesh_H)
	=
	\bigcup \{\mcS^m(T) : T\in\mesh_H\},
	\qquad \text{ for all } m\geq 1\,.
\label{eq:FineScaleMesh}
\end{equation}
Notice that for each $m$, $\bbX_m$ coincides with the set of all vertices in $\mcS^m(\mesh_H)$.
Therefore, each point $x_i$ in $\bbX_m$ can be identified with a vertex function $\phi_i$ supported by only the neighboring elements appearing in $\mcS^m(\mesh_H)$; see \cref{fig:Subdivision}.

Assume that the fine mesh level, $m$, is chosen large enough that one may find several points in $\bbX_m$ which lie in the interior of some macro-element $T$.
If $x_i\in \mathring{T}_m$ is any such point, then the translation set $\scD(x_i)=\scD(\mathring{T}_m)$ will only contain translations aligned with edges appearing in the original subdivision $\mcS^1(T)$.
Identifying $\phi_{x_i}=\phi_i$ we see that for every $x_i\in \mathring{T}_m$ and $x_j\in T_m$,
\begin{equation}
	a(\phi_{j},\phi_{i})
	=
	\begin{cases}
	\Phi^\delta_{T}(x_i)\,,\quad & \text{if } \delta = (x_j-x_i) \in\scD(\mathring{T}_m),\\
		0\,, & \text{otherwise,}
	\end{cases}
	\label{eq:StencilFunction2StiffnessMatrixHHG}
\end{equation}
where $\Phi^\delta_{T}:\mathrm{conv}(\mathring{T}_m)\to\R$ is a local stencil function for the current level $m$ and macro-element $T$ and $\mathrm{conv}(\mathring{T}_m)$ is the convex hull of $\mathring{T}_m$.
In general, notice that $\Phi^\delta_{T} \neq \Phi^{2\delta}_{T}$ are two different stencil functions, corresponding to the same direction but different mesh levels.

Consider the bilinear form~\cref{eq:BilinearFormPoisson} with a variable diffusion coefficient.
A visualization of several corresponding local stencil functions, coming from locally-structured meshes used in our numerical experiments, is given in \Cref{fig:stencilweight_surfaceplot}.
It is clear from this figure that each $\Phi^\delta_{T}$ has the potential to be a smooth function.
We now come to the final essential component of our surrogate methodology; the approximation of $\Phi^\delta_{T}$.

\begin{figure}	\centering
	\begin{minipage}{0.48\linewidth}
		\centering
		\includegraphics[width=\linewidth]{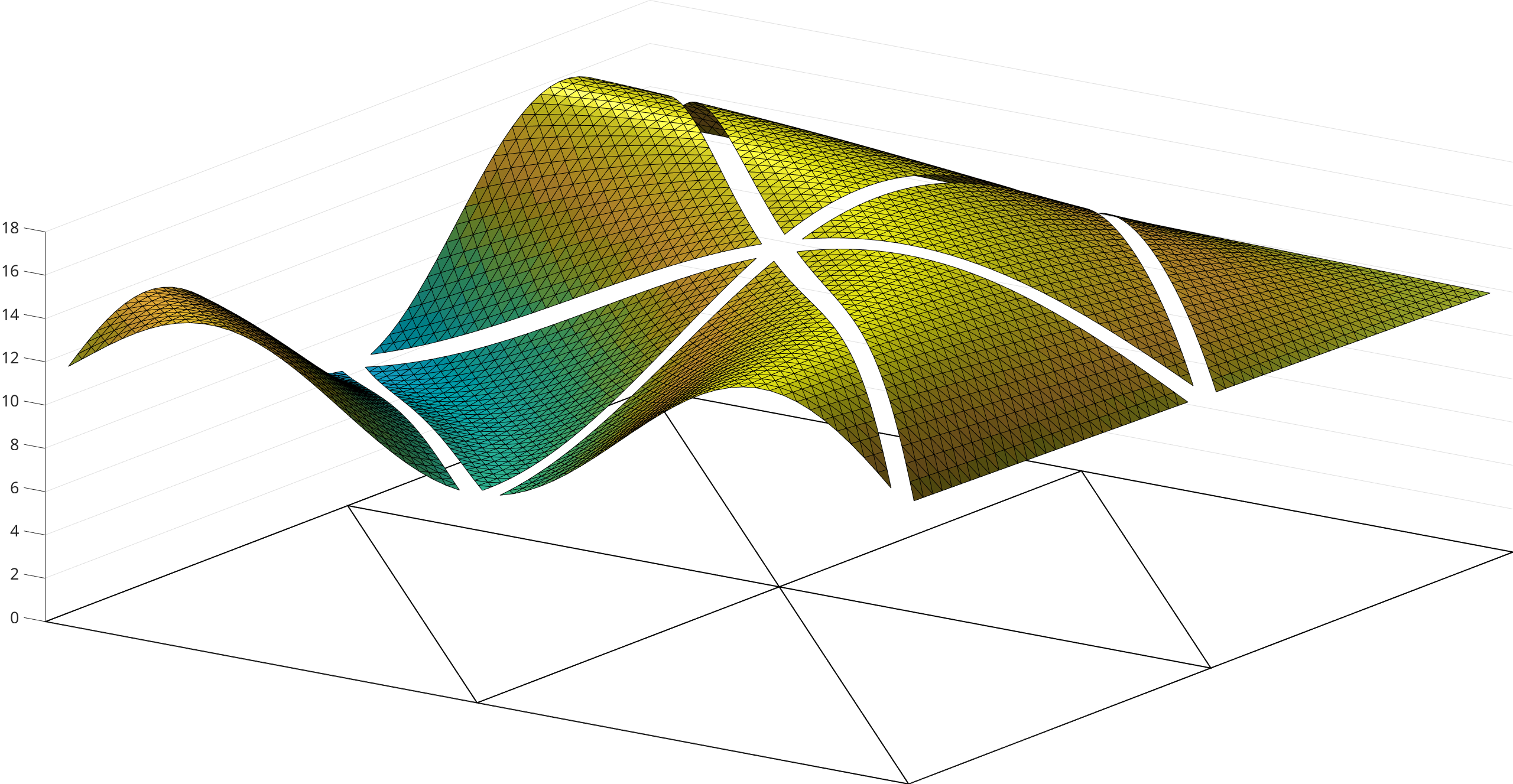}
	\end{minipage}\hfill
	\begin{minipage}{0.48\linewidth}
		\centering
		\includegraphics[width=\linewidth]{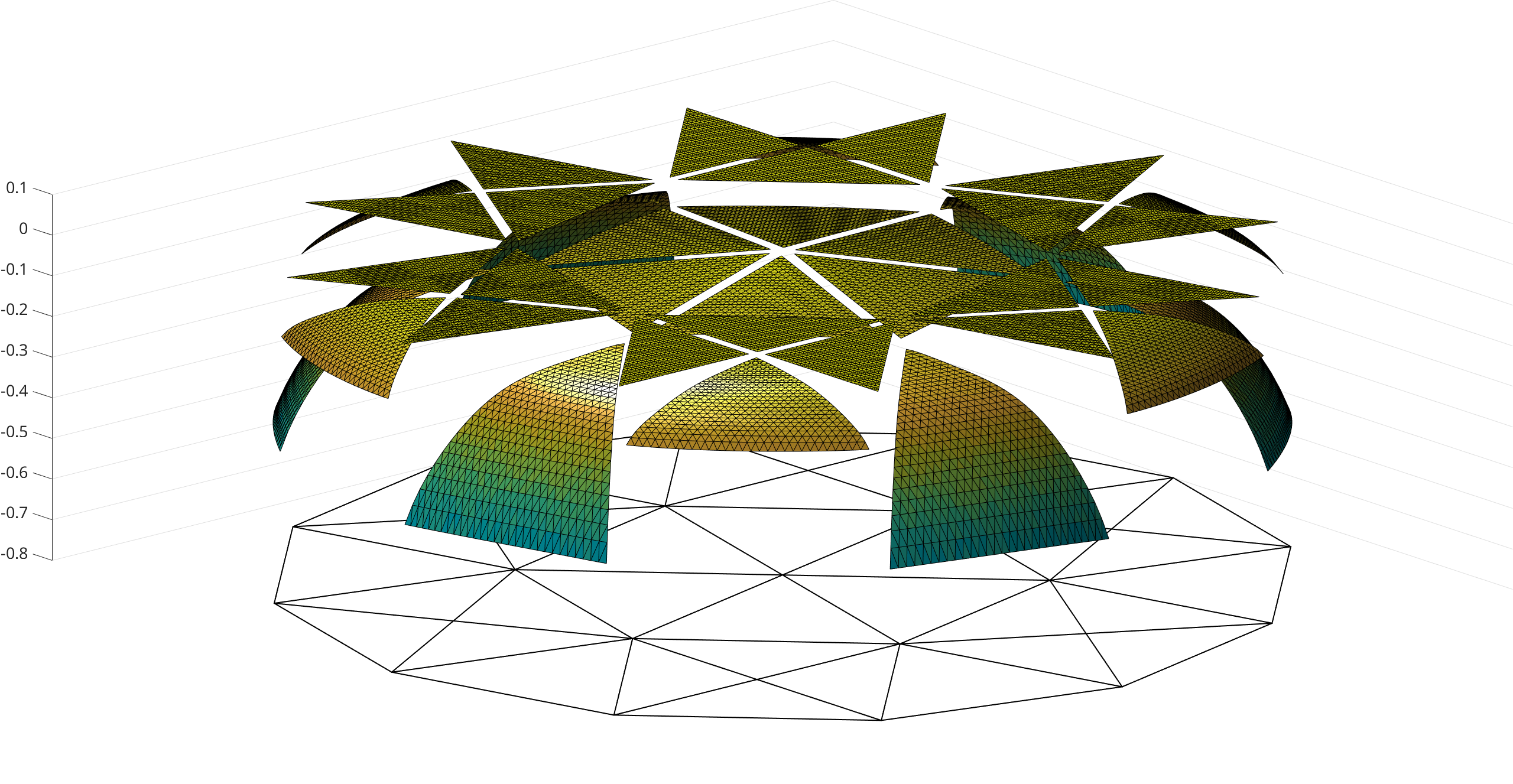}
	\end{minipage}
	\caption{\label{fig:stencilweight_surfaceplot}Left: Surface plots of the local stencil functions $\Phi^\delta_{T}(x)$, for the degenerate direction $\delta=0$ and level $m = 5$, from the numerical experiments recounted in \Cref{sec:scalar_coefficient_benchmark}.
	Here, the function is plotted over each subset $\mathrm{conv}(\mathring{T}_m)\subset T\in\mesh_H$.
	In this case, it clearly appears that each stencil function can be related to the restriction of a globally continuous function $\Phi^\delta_{T}(x)$. This is a result of the structure of the macro-mesh only.
	Right: Surface plots of the stencil functions $\Phi^\delta_{T}(x)$ after the first time step from the experiment recounted in \Cref{sec:plaplacian_diffusion_example}, for the eastern direction $\delta$ relative to each macro-element and level $m = 5$. Moreover, although they are clearly related, it is evident that the corresponding stencil functions lack any global smoothness property.}
\end{figure}

\begin{remark}
\label{rem:Geometry}
	The lattice structure of locally-structured meshes is destroyed under smooth, non-affine transformations $\hat{T} \to \tilde{T}\subset\Omega$.
	This offers a possible impediment to our construction in the case of non-polygonal domains $\overline{\Omega} \neq \overline{\Omega}_H = \bigcup_{T\in\mesh_H} \overline{T}$.
	In fact, if a globally continuous transformation $\varphi:\overline{\Omega}_H \to \overline{\Omega}$ is available, with $\varphi|_T$ a smooth bijection for every $T\in\mesh_H$, then an equivalent method can be found using local pull-backs of $\varphi$.
							For example, the bilinear form in~\cref{eq:BilinearFormPoisson}, $a_1:H^1(\Omega)\times H^1(\Omega) \to \R$, simply transforms to $a_{1,H}:H^1(\Omega_H)\times H^1(\Omega_H) \to \R$, where
	\begin{equation}
		a_{1,H}(u,v)
		=
		\int_{\Omega_H} \nabla u(x)^\top K_{H}(x) \nabla v(x) \dd x
		\,,
						\qquad
		K_{H} = \frac{D\varphi^{-1}\sspace (K\circ\varphi)\sspace D\varphi^{-\top}}{|\det{\left(D\varphi^{-1}\right)}|}
		\,.
	\label{eq:BilinearFormPoissonTensor}
	\end{equation}
	Therefore, from now on, we operate under the assumption that the macro-mesh $\mesh_H$ is geometrically conforming, $\overline{\Omega} = \overline{\Omega}_H$.
\end{remark}

\subsection{The inherited regularity of stencil functions} \label{sub:the_inherited_regularity_of_stencil_functions}

Our approach is to locally project each stencil function $\Phi^\delta_{T}$ in~\cref{eq:StencilFunction2StiffnessMatrixHHG} onto a high-dimensional space of polynomials and later use this projection to compute approximate values of the stiffness matrix $\sfA$. Let $\mesh_H$ be a shape-regular simplicial mesh and consider a locally-structured mesh of level $m$ subordinate to this mesh, $\mcS^m(\mesh_H)$.
Before moving on, observe that definition~\cref{eq:StencilFunction2StiffnessMatrixHHG} in fact holds for any $x_i,x_j\in T_m$ if $x_i$ or $x_j \in \mathring{T}_m$.
Therefore, due to the structure of the vertex functions in a locally-structured mesh, the domain of each $\Phi^\delta_{T}$ can actually be extended to a set $\overline{T}_\delta$ lying between $\mathrm{conv}(\mathring{T}_m)$ and $\overline{T}$.
Indeed, identifying the test function in~\cref{eq:StencilFunction} with the $i$-th vertex function, $\phi(x) = \phi_i(x+x_i)$, for an arbitrary vertex $x_i\in\mathring{T}_m$, define
\begin{equation}
	T_\delta
	=
		\{x\in T : x + y \in T, \text{ for all } y\in\Omega_\delta = \supp(\phi)\cap\supp(\phi_\delta)\}
	.
\end{equation}
From now on, we assume $\Phi^\delta_{T}:\overline{T}_\delta\to\R$.
See~\cref{fig:DomainsofTheStencilFunctions} for a depiction of the sets in a triangular mesh and note that $T_{-\delta} = T_\delta + \delta$, for every $\delta \in \scD(T_m)$.

\begin{figure}\centering
\includegraphics[width=0.8\textwidth]{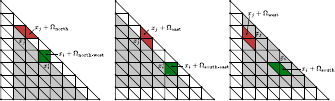}
\caption{\label{fig:DomainsofTheStencilFunctions}Illustrations of the domains $T_\delta$ in gray and $\Omega_{\delta}$ for six exemplary directions $\delta$. Left: Northern and north-western direction. Middle: Eastern and south-eastern direction. Right: Western and southern direction.}
\end{figure}

For any $T\in\mesh_H$, let $\mcP_q(T_\delta)$ denote the space of polynomials of degree at most $q$ on the simplex $\overline{T}_\delta$ and let $\Pi^\delta_T:C^0(\overline{T}_\delta)\to \mcP_q(T_\delta)$ be an $L^\infty$-continuous projection operator, $\Pi^\delta_T\circ \Pi^\delta_T = \Pi^\delta_T$.
For each macro-element $T\in\mesh_H$ and level $m\in\N$, define the \emph{surrogate stencil function} ${\tilde{\Phi}^\delta_{T}\!:{T}_\delta\to\R}$ to be the corresponding polynomial projection of $\Phi^\delta_{T}$.
Namely,
\begin{equation+}
	\tilde{\Phi}^\delta_{T}
	=
	\Pi^\delta_T\, \Phi^\delta_{T}
	.
	\label{eq:StencilInterpolation}
\end{equation+}

In order to correctly argue that a polynomial approximation of $\Phi^\delta_{T}$ is feasible, it is necessary to classify its regularity depending on the problem at hand.
In the following proposition, we show, under the modest assumptions above, that if $G(\cdot|_T,\cdot,\cdot)$ is a polynomial in its first argument, then $\Phi^\delta_{T}$ is also a polynomial of the same degree.

\begin{lemma}
\label{prop:Interpolation}
	Fix a simplex $T\in\mesh_H$.
		 	Assume that the bilinear form $a(\cdot,\cdot)$ in~\cref{eq:ContinousVF} satisfies assumptions~\cref{eq:BilinearFormStructure,eq:SupportAssumption}, where the integrand $G(\cdot|_T,\cdot,\cdot)$ is a polynomial of at most degree $q$ in its first argument.
	Then, for any locally-structured mesh $\mcS^m(\mesh_H)$, as defined above, every local stencil function $\Phi^\delta_{T} \in \mcP_q(T_\delta)$ is a polynomial of the same degree.
\end{lemma}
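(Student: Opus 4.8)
The plan is to trace the definition~\cref{eq:StencilFunction} of the local stencil function $\Phi^\delta_T$ and show that the dependence on the spatial shift variable $x$ enters only through the first argument of $G$, so that polynomial dependence of $G(\cdot|_T,\cdot,\cdot)$ in that argument is inherited directly. Recall from~\cref{eq:StencilFunction} and the extension discussion preceding the lemma that, for $x\in T_\delta$,
\begin{equation*}
	\Phi^\delta_T(x)
	=
	\int_{\Omega_\delta} G(x+y,\phi_\delta(y),\phi(y)) \dd y,
\end{equation*}
where $\phi$ is the (fixed) reference vertex function, $\phi_\delta(y) = \phi(y-\delta)$, and $\Omega_\delta = \supp(\phi)\cap\supp(\phi_\delta)$ is a fixed bounded set independent of $x$. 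The key structural observation is that the integration variable $y$ ranges over $\Omega_\delta$, which does not depend on $x$, and the only appearance of $x$ is the shift $x+y$ in the first slot of $G$; the second and third slots $\phi_\delta(y)$, $\phi(y)$ (which may carry distributional derivatives, per the remarks after~\cref{eq:SupportAssumption}) are independent of $x$.

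First I would fix the multi-index expansion: by hypothesis there is, for the given $T$, a representation $G(z,\xi,\eta) = \sum_{|\alpha|\le q} z^\alpha\, c_\alpha(\xi,\eta)$ valid for $z$ in a neighborhood of $T$, where each $c_\alpha$ is a (possibly distributional) functional in the remaining two arguments. One must check that $x+y$ indeed stays in that neighborhood: for $x\in T_\delta$ and $y\in\Omega_\delta$ we have $x+y\in T$ by the very definition of $T_\delta$, so the polynomial representation applies pointwise in the integrand. Substituting and using the multinomial expansion $(x+y)^\alpha = \sum_{\beta\le\alpha}\binom{\alpha}{\beta} x^\beta y^{\alpha-\beta}$, I would interchange the finite sum with the integral (legitimate since the sum is finite) to obtain
\begin{equation*}
	\Phi^\delta_T(x)
	=
	\sum_{|\alpha|\le q}\sum_{\beta\le\alpha}\binom{\alpha}{\beta}\, x^\beta
	\int_{\Omega_\delta} y^{\alpha-\beta}\, c_\alpha\big(\phi_\delta(y),\phi(y)\big)\dd y.
\end{equation*}
Each inner integral is a fixed real number (independent of $x$), and the power $x^\beta$ appearing carries $|\beta|\le|\alpha|\le q$, so $\Phi^\delta_T$ is a polynomial in $x$ of degree at most $q$, i.e. $\Phi^\delta_T\in\mcP_q(T_\delta)$. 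This proves the claim.

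The only genuinely delicate point — and the one I would treat most carefully — is the meaning of the "integrals" $\int_{\Omega_\delta} y^{\alpha-\beta} c_\alpha(\phi_\delta(y),\phi(y))\dd y$ when $G$ induces distributional derivatives on its second and third arguments (as in the Poisson example~\cref{eq:BilinearFormPoisson}, where $G$ involves $\nabla\phi_\delta$ and $\nabla\phi$). Since $\phi$ is only Lipschitz (a hat function), these are not classical integrals but must be read via the action of $G(\cdot,\cdot,\cdot)$ as a bilinear pairing on $W^{1,p}$; the point to make is that the multinomial expansion in the first variable is algebraic and commutes with that pairing, and the finiteness of each resulting pairing $\langle\,y^{\alpha-\beta}c_\alpha(\cdot,\cdot)\,\rangle$ against the fixed pair $(\phi_\delta,\phi)$ follows from the continuity of $a(\cdot,\cdot)$ together with the sparsity assumption~\cref{eq:SupportAssumption}. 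Concretely, I would phrase the argument so that one never needs to manipulate distributions explicitly: the map $x\mapsto \Phi^\delta_T(x)$ is, by construction, a finite linear combination of the fixed bilinear-form evaluations $a(\phi_{x_j},\phi_{x_i})$ up to the change of variables~\cref{eq:ChangeOfVars}, and replacing $x+y$ by its polynomial expansion reorganizes this as $\sum_{|\beta|\le q} x^\beta\, a_\beta$ for fixed coefficients $a_\beta\in\R$, which is the desired conclusion. Everything else — the interchange of finite sum and integral, the multinomial identity — is routine.
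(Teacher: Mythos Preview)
Your proof is correct and follows essentially the same approach as the paper: expand $G$ as a polynomial in its first argument, apply the multinomial identity to $(x+y)^\alpha$, and use linearity of the integral over $y\in\Omega_\delta$ (together with the fact that $x\in T_\delta$ forces $x+y\in T$) to conclude that $\Phi^\delta_T$ is a polynomial of degree at most $q$ in $x$. The paper's version is slightly terser---it writes the coefficients directly as $c_\alpha(y)$ rather than $c_\alpha(\xi,\eta)$ and does not dwell on the distributional interpretation---but the argument is the same.
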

\begin{proof}
	Recall definition~\cref{eq:StencilFunction}.
	For every $x_i,x_j\in\bbX_m$, every stencil function $\Phi((x_i,x_j-x_i),x)$ is defined with a test function $\phi\in V$.
	Fixing any arbitrary vertex $x_i\in\mathring{T}_m$, identify this test function with the $i$-th vertex function, $\phi(x) = \phi_i(x+x_i)$.
		Let $\alpha = (\alpha_1,\ldots,\alpha_n) \in \N_0$ denote a standard multi-index, $|\alpha| = \sum_{i=1}^n |\alpha_i|$ and $x^\alpha = x_{\scriptscriptstyle 1}^{\alpha_1}\cdots x_{\scriptscriptstyle n}^{\alpha_n}$.
	By assumption, we may express $G(x, \phi_\delta(y),\phi(y)) = \sum_{|\alpha|\leq l} c_\alpha(y) x^\alpha$, where each coefficient function $c(y)$ has support only in $\Omega_\delta = \supp(\phi)\cap\supp(\phi_\delta)$.
		Moreover, if $x+y\in T$, then
	\begin{equation}
		G(x+y, \phi_\delta(y),\phi(y))
		=
		\sum_{|\alpha|\leq l} c_\alpha(y) (x+y)^\alpha
		=
		\sum_{|\alpha|\leq l}\sum_{|\nu|\leq \alpha}\binom{\alpha}{\nu} c_\alpha(y)y^\nu x^{\alpha-\nu}
			\label{eq:PolyExpansion}
	\end{equation}
	is clearly an equal degree polynomial in the variable $x$.
	The proof is completed by noting that the integral in~\cref{eq:StencilFunction} is performed only over the variable $y\in\Omega_\delta$ and so the stencil function acts like a convolution.
	Indeed, under the assumption $x\in T$, only the subset of points $x\in T_\delta = \{x\in T : x+y \in T \text{ for all } y\in \Omega_\delta\}$ guarantee that~\cref{eq:PolyExpansion} holds at every point of integration $y$.
	In this case, by linearity of integration, $\Phi^\delta_{T}$ is a member of $\mcP_q(T_\delta)$, with its coefficients defined by the associated integrals of the $y$-dependent functions in the right-hand side of~\cref{eq:PolyExpansion}.
\end{proof}

\begin{corollary}
\label{cor:Interpolation}
	In the setting of \Cref{prop:Interpolation},
	 		$
		\tilde{\Phi}^\delta_{T}
		=
		\Phi^\delta_{T}
	$.
			\end{corollary}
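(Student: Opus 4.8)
The plan is to derive the corollary directly from \Cref{prop:Interpolation} together with the defining property of the projection operator $\Pi^\delta_T$. First I would recall that, by hypothesis, we are in the exact setting of \Cref{prop:Interpolation}: the bilinear form satisfies~\cref{eq:BilinearFormStructure,eq:SupportAssumption} and $G(\cdot|_T,\cdot,\cdot)$ is a polynomial of degree at most $q$ in its first argument. Hence \Cref{prop:Interpolation} applies and yields $\Phi^\delta_{T} \in \mcP_q(T_\delta)$.

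Next I would invoke the definition of the surrogate stencil function in~\cref{eq:StencilInterpolation}, namely $\tilde{\Phi}^\delta_{T} = \Pi^\delta_T\, \Phi^\delta_{T}$, and recall that $\Pi^\delta_T:C^0(\overline{T}_\delta)\to \mcP_q(T_\delta)$ is a projection onto $\mcP_q(T_\delta)$, i.e.\ $\Pi^\delta_T\circ\Pi^\delta_T = \Pi^\delta_T$. The key observation is that a projection fixes its range pointwise: if $p\in\mcP_q(T_\delta)$, then $p$ is itself in the image of $\Pi^\delta_T$ (since $\Pi^\delta_T$ maps into $\mcP_q(T_\delta)$ and is the identity on that image by idempotency), so $\Pi^\delta_T p = p$. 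Applying this with $p = \Phi^\delta_{T}$, which lies in $\mcP_q(T_\delta)$ by the previous step, gives $\tilde{\Phi}^\delta_{T} = \Pi^\delta_T\,\Phi^\delta_{T} = \Phi^\delta_{T}$, which is exactly the claim.

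One small technical point worth addressing explicitly is that the statement "$\Pi^\delta_T$ is a projection onto $\mcP_q(T_\delta)$" should be read as saying that $\mcP_q(T_\delta)$ is the range and $\Pi^\delta_T$ restricts to the identity there; if the paper's convention only guarantees $\Pi^\delta_T\circ\Pi^\delta_T = \Pi^\delta_T$ with image contained in $\mcP_q(T_\delta)$, one still concludes the fixed-point property for every element of $\mcP_q(T_\delta)$ provided that image equals $\mcP_q(T_\delta)$ — which is the natural reading and is implicit in calling $\tilde\Phi^\delta_T$ the "polynomial projection." I do not anticipate a genuine obstacle here: the proof is a two-line consequence of \Cref{prop:Interpolation} and the idempotency of $\Pi^\delta_T$, and the only thing to be careful about is stating the fixed-point-on-the-range property of a projection cleanly so that the logical chain $\Phi^\delta_T\in\mcP_q(T_\delta)\Rightarrow\Pi^\delta_T\Phi^\delta_T = \Phi^\delta_T$ is unambiguous.
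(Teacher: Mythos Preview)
Your proposal is correct and matches the paper's own proof essentially verbatim: the paper simply notes that since $\Phi^\delta_{T} \in \mcP_q(T_\delta)$ by \Cref{prop:Interpolation}, the projection property gives $\Pi^\delta_T\,\Phi^\delta_{T} = \Phi^\delta_{T}$. Your additional remarks about the range of $\Pi^\delta_T$ are accurate but more than the paper bothers to spell out.
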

\begin{proof}
	Since $\Phi^\delta_{T} \in \mcP_q(T_\delta)$, we immediately see that $\Pi^\delta_T\, \Phi^\delta_{T} = \Phi^\delta_{T}$.
\end{proof}

\subsection{Surrogate stiffness matrices} \label{sub:the_surrogate_stiffness_matrix}

The main goal of the entire effort above is to guide us in reducing the vast majority of the finite element assembly process to the evaluation of a small set of functions which can, in fact, be locally approximated by polynomials.
As in \Cref{ssub:hierarchical_hybrid_grids_with_triangles}, let $\mesh_H$ be a shape-regular triangulation of a domain $\Omega$ into disjoint \emph{macro}-simplices $T$.

Our construction of the surrogate matrix $\tilde{\sfA}$ is obviously built to exploit the local lattice structure of locally-structured meshes.
As argued previously, a surrogate stencil function $\tilde{\Phi}^\delta_{T}$ can be used to approximate any matrix entry $\sfA_{ij}$ coming from a locally-structured mesh if at least one of the corresponding vertices $x_i$ or $x_j$ belongs to $\mathring{T}_m$.
This leaves us to define only the nonzero matrix entries coming from the mutual interaction of vertex functions at the boundaries of the macro-elements.
Although these entries could also be approximated by surrogate stencil functions \textemdash{} in this case, these additional functions would be defined on each subsimplex of the macro-mesh $\mesh_H$ \textemdash{} let us assume that they are computed directly.
Because the growth of the macro-mesh boundary and interface interactions grow at an order of magnitude less than the interior interactions, computing these matrix entries directly does not affect to the asymptotic performance of the methodology.
Finally, letting $\bdry\bbX_m= \bigcup_{T\in\mesh_H} \bdry T_m$ denote the union of all macro-mesh boundary vertices, we define the general surrogate stiffness matrix
\begin{equation}
	\tilde{\sfA}_{ij}
	=
	\begin{cases}
	\int_\Omega G(y,\phi_j(y),\phi_i(y)) \dd y\,,\quad & \text{if both } x_i \text{ and } x_j \in \bdry\bbX_m \\
	\tilde{\Phi}^\delta_{T}(x_i)\,,\quad & \text{if } \delta = (x_j-x_i) \in\scD(\mathring{T}_m) \text{ and } x_i \text{ or } x_j \in \mathring{T}_m,\\
	0\,, & \text{otherwise.}
	\end{cases}
	\label{eq:SurrogateStiffnessMatrix}
\end{equation}

\begin{remark}
\label{rem:SurrogateSymmetry}
Due to the presence of the surrogate stencil functions $\tilde{\Phi}^\delta_T$, even if $\sfA$ is symmetric, $\tilde{\sfA}$ will generally not be.
However, recalling~\cref{eq:SymmetricStencilFunction2StiffnessMatrixGENERAL}, observe that if $a(\cdot,\cdot)$ is symmetric, then $\Phi^\delta_{T}(x) = \Phi^{-\delta}_{T}(x+\delta)$.
Therefore, if we use related projection operators
\begin{equation}
	\Big[\Pi^\delta_T \Phi^\delta_{T}\Big](x) = \Big[\Pi^{-\delta}_T \Phi^{-\delta}_{T}\Big](x+\delta)\,,
\label{eq:SymmetricInterpolants}
\end{equation}
for each opposing direction $\delta$ and $-\delta$, then $\tilde{\sfA}$ will be symmetric.
Indeed, if $\delta = x_j-x_i$, then
\begin{equation}
	\tilde{\sfA}_{ij}
	=
	\Pi^\delta_T \Phi^\delta_{T}(x_i)
	=
	\Pi^{-\delta}_T \Phi^{-\delta}_{T}(x_i+\delta)
	=
	\Pi^{-\delta}_T \Phi^{-\delta}_{T}(x_j)
	=
	\tilde{\sfA}_{ji}
	\,.
\end{equation}
\end{remark}

\section{Examples} \label{sec:examples}

In this section, we present three example problems which easily fit into the framework above.

\subsection{The variable coefficient Poisson equation} \label{sub:poisson_s_equation}

Consider the Poisson-type equation ${-\mathrm{div}(K\nabla u) = f}$ in $\Omega$, $u = 0$ on $\bdry\Omega$, with a load $f\in L^2(\Omega)$ and a variable, symmetric positive-definite tensor $K$.
Furthermore, assume that for each index $a,b$, $K_{ab}\in \mcP_q(\Omega)$.
Recall~\cref{eq:BilinearFormPoisson} and note that we have already shown that the weak form of this problem can be cast into the framework above.

Assume that, within some set $T\subset\Omega$, each vertex function $\phi_i$ is a translation of a fixed test function $\phi(x) = \phi_i(x-x_i)$.
Then for each $\phi_i,\phi_j$, the stiffness matrix entry
\begin{equation}
	\sfA_{ij}
		=
	\int_{\Omega} \nabla \phi_i(x)^\top K(x)\sspace \nabla \phi_j(x)\hspace{0.5pt} \dd x
		\label{eq:TrueStiffnessMatrixPoisson}
\end{equation}
can equally well be expressed as the evaluation (at the point $x_i$) of a stencil function, which, by \Cref{prop:Interpolation}, is simply a polynomial of the same degree as the diffusion tensor $K$.
In the case of locally-structured meshes, there is a locally defined stencil function $\Phi^{\delta}_{T}:\Omega\to\R$ for each macro-element $T$ and level $m$.
In this case, each $\Phi^{\delta}_{T}:\overline{T}_\delta\to\R$ is a polynomial (of degree at most $q$) on $T_\delta$.

\subsection{Linearized elasticity} \label{sub:linearized_elasticity}

Let $\vec{\nabla}$ and $\mathrm{Div}$ denote the row-wise distributional gradient and divergence, respectively.
Now define $\epsilon(u) = \frac{1}{2}[\vec{\nabla} u + (\vec{\nabla}u)^\top]$ to be the symmetric gradient operator $\epsilon:H^1(\Omega)^n \to L^2(\Omega)^n$, where $n\geq 2$.
Consider the following standard PDE model for the displacement $u\in H^1_0(\Omega)^n$ of a linearly elastic isotropic material: $-\mathrm{Div}\, \sigma = \vec{f}$, where the stress $\sigma = 2\mu \epsilon(u) + \lambda I \mathrm{div}\,u$ and the load $f\in L^2(\Omega)^n$.

The weak form of this equation is well known in the literature \cite{ciarlet1988three} and the associated bilinear form is simply
\begin{equation}
	a_2(u,v) 
	=
	\int_{\Omega} 2\mu \epsilon(u):\epsilon(v) + \lambda\hspace{0.75pt} \mathrm{div}(u)\, \mathrm{div}(v)\hspace{0.5pt} \dd x
	\qquad \text{for all } u, v\in \big[H^1_0(\Omega)\big]^n
	.
\label{eq:BilinearFormLinearElasticity}
\end{equation}
This bilinear form obviously satisfies assumptions~\cref{eq:BilinearFormStructure,eq:SupportAssumption}.
If we assume that the Lam\'e parameters $\mu,\lambda:\Omega\to\R$ are piecewise polynomials on a collection of disjoint subdomains $T\in\mcT_H$, then each associated stencil function is also a piecewise polynomial.

\subsection[p-Laplacian diffusion]{$p$-Laplacian diffusion} \label{sub:p-Laplacian_diffusion}

For any $1<p<\infty$, let $\Delta_p\, u = \div(|\nabla u|^{p-2}\nabla u)$ be the $p$-Laplacian operator.
Fix a valid parameter $p$ and consider the nonlinear diffusion equation $\frac{\partial u}{\partial t} - \Delta_p u = f$, where $f\in L^p(\Omega)$.
A simple Euler time-stepping scheme replaces the time derivative $\frac{\partial u}{\partial t}$ by the quotient $\frac{u_{k+1}-u_k}{d t}$, where $dt>0$ is a fixed time-step parameter.
Choosing backward Euler time-stepping and defining $f_k = f(k\cdot dt)$, we arrive at a semi-discrete nonlinear elliptic PDE for the solution variable $u_k$, which must be solved at each step $k\in\N$: $u_{k} - dt\hspace{0.5pt}  \Delta_p u_{k} = dt\hspace{0.5pt} f_k + u_{k-1}$.
Upon fixed point linearization of the weak form of this equation, we uncover the following bilinear form:
\begin{equation}
	b(u,v)
	=
	\int_{\Omega} dt\, |\nabla \tilde{u}|^{p-2}\, \nabla u\cdot\nabla v + u \, v\hspace{0.5pt} \dd x
	,
	\qquad
	\text{for all } u,v\in W^{1,p}(\Omega)
	\,.
\label{eq:BilinearFormpLaplace}
\end{equation}
Here, the variable coefficient $\tilde{u}\in W^{1,p}(\Omega)$ is usually identified with the previous solution iteration in the associated fixed point algorithm (cf. \Cref{sec:plaplacian_diffusion_example}).

The bilinear form $b(\cdot,\cdot)$ can easily be placed into the form of~\cref{eq:BilinearFormStructure}
and each matrix entry can therefore be superceded by stencil function evaluations, $\Phi^\delta_T(x)$, as in~\cref{eq:StencilFunction}.
Alternatively, one may split $b(\cdot,\cdot)$ into a mass term $m(\cdot,\cdot)$ and a $dt$-weighted stiffness term $a_3(\cdot,\cdot)$.
Specifically, $b(u,v) = m(u,v) + dt\cdot a_3(u,v)$, where
\begin{equation}
	m(u,v)
	=
	\int_{\Omega} u \, v\hspace{0.5pt} \dd x
	\qquad
	\text{and}
	\qquad
	a_3(u,v)
	=
	\int_{\Omega} |\nabla \tilde{u}|^{p-2}\, \nabla u\cdot\nabla v\hspace{0.5pt} \dd x
	\,.
\label{eq:SplitBilinearFormspLaplace}
\end{equation}
With this observation in hand, we see that $b(u,v)$ may be discretized by a linear combination of independent surrogates; one for $m(\cdot,\cdot)$ and one for $a_3(\cdot,\cdot)$ (cf. \Cref{sec:plaplacian_diffusion_example}).
In either approach, the variable coefficient $|\nabla \tilde{u}(x)|^{p-2}$ will generally not remain a polynomial in a subdomain of $\Omega$ and the accuracy of a surrogate stencil function $\tilde{\Phi}^\delta_T$ will reflect the local regularity of the solution from the previous iteration, $\tilde{u}$.

\section{Boundary conditions and the zero row sum property} \label{sec:the_zero_row_sum_property}

It is generally appropriate to define the surrogate stiffness matrix component-wise by the rule given in~\cref{eq:SurrogateStiffnessMatrix}.
Nevertheless, in some problems the operator to be discretized has a kernel which is not guaranteed to be respected by the surrogate.
In such scenarios, it is possible that better performance and accuracy can be achieved if elements of this kernel are incorporated into the construction of the surrogate.
This occurrence is most easily illustrated with the Poisson example from \Cref{sub:poisson_s_equation}.

Consider the bilinear form $a_1:H^1(\Omega)\times H^1(\Omega)\to \R$, defined in~\cref{eq:BilinearFormPoisson}.
Define $V_h^\ext = \{ v \in H^1(\Omega) : v|_t \in \mcP_1(t) \text{ for each } t\in\mcS^m(\mesh_H)\}$
and $V_{h} = \{ v \in H^1_0(\Omega) : v|_t \in \mcP_1(t) \text{ for each } t\in\mcS^m(\mesh_H)\} \subset V_h^\ext$.
Let the corresponding vertex function bases be $\{\phi_i\} \subset \{\phi_i^\ext\}$, with $\phi_i = \phi^\ext_i$ for $1\leq i\leq N$.
In most finite element software, a space like $V_h^\ext$ is used to impose Dirichlet boundary conditions.
Indeed, a ``lift'' of the Dirichlet data, say $u_h^\ext = \sum_i \sfu^\ext_i\phi_i^\ext$, is generally constructed from a linear combination of the set $\{\phi_i^\ext\}\setminus\{\phi_i\}$.
Then, taking $\sfA^\ext_{ij} = a(\phi_j^\ext,\phi_i)$ for each valid $i,j$, a modified load vector $\sff^\ext = \sff - \sfA^\ext\sfu^\ext$ is used in computation.

It is obvious that $\nabla\, 1 = 0$ and so $a_1(1,v) = 0$ for any $v\in H^1\Omega)$.
Therefore, by the partition of unity property $\sum_i \phi_i^\ext = 1$, the zero row sum of the matrix $\sfA^\ext$ also vanishes.
Namely,
$
	\sum_j \sfA_{ij}^\ext
		=
	0
	$.
This property may be induced in the corresponding surrogate matrix if we simply define
$
			\tilde{\sfA}_{ii}^\ext
	=
	- \sum_{j\neq i} \tilde{\sfA}_{ij}^\ext
	$,
for every $x_i \in \bbX_m$, where $\tilde{\sfA}_{ij}^\ext = \sfA_{ij}^\ext$ for every $j$ where $\sfA_{ij}$ is not defined, and $\tilde{\sfA}_{ij}^\ext = \tilde{\sfA}_{ij}$ otherwise.
With this extra condition, the surrogate matrix~\cref{eq:SurrogateStiffnessMatrix} actually requires one fewer independent stencil function; i.e., $\Phi^0_T = -\sum_{\delta \in \scD(T_m)\setminus\{0\}} \Phi^\delta_T$, for every $T\in\mesh_H$.
By this definition, although $\tilde{\sfA}$ does not satisfy the zero row sum property, the matrix $\sfA -\tilde{\sfA}$ does.
Indeed,
\begin{equation}
\label{eq:ZeroRowSumPropertySurrogateDifference}
	\sfA_{ii} - \tilde{\sfA}_{ii}
			=
	-
	\sum_{j\neq i} \big(\sfA_{ij}^\ext - \tilde{\sfA}_{ij}^\ext\big)
	=
	-
	\sum_{j\neq i} \big(\sfA_{ij} - \tilde{\sfA}_{ij}\big)
	\,.
\end{equation}
In this way, the stiffness matrix coming from linearized elasticity~\cref{eq:BilinearFormLinearElasticity} is similar to the stiffness matrix coming from the Laplacian.
Indeed, the zero row sum property can be incorporated into its surrogate via a straight-forward generalization.

\section{Analyzing the surrogate discretization} \label{sec:stability_of_the_surrogate_operator}

In this section, we define and motivate what we see as some the most essential features in the analysis of our surrogate methods.
We begin with a review of discrete stability in the context of~\cref{eq:SurrogateVF}.
We then touch on the concept of spectral convergence of the surrogate matrix $\tilde{\sfA}\to\sfA$, which helps us motivate the need to control $\|\sfA-\tilde{\sfA}\|_{\max}$.
This specific quantity will repeatedly appear in the \textit{a priori} error analysis in \Cref{sec:a_priori_error_estimation}.

\subsection{Discrete stability} \label{sub:stability}
Let $S = \{v\in V \,:\, \|v\|_V = 1\}$ be the surface of the unit ball in $V$.
Recall~\cref{eq:DiscreteVariationalProblems} and assume that the discretization $\sfA\sfu = \sff$ is stable.
In the present context, this is equivalent to the existence of a constant $\alpha>0$ such that
\begin{equation}
	\sup_{v_h\in V_h\cap S} a(w_h,v_h) \geq \alpha \|w_h\|_V
	\quad
	\text{for all } w_h\in V_h
	\,.
			\end{equation}
Likewise, in order for the surrogate discretization $\tilde{\sfA}\tilde{\sfu} = \sff$ to be stable, we must show that there exists a constant $\tilde{\alpha}>0$ such that 
\begin{equation}
	\sup_{v_h\in V_h\cap S} \tilde{a}(w_h,v_h) \geq \tilde{\alpha}\|w_h\|_V
	\quad
	\text{for all } w_h\in V_h
	\,.
			\label{eq:DiscreteStability}
\end{equation}
Inequality~\cref{eq:DiscreteStability} guarantees that $\tilde{\sfA}\tilde{\sfu} = \sff$ has a unique solution and  that $\|\tilde{u}_h\|_V \leq \tilde{\alpha}^{-1}\|F\|_{V^\ast}$.
Equally important, however, it is a necessary precursor to Strang's First Lemma, which in some cases can be used, in part, to show that $\tilde{u}_h$ converges to the exact solution $u$ (see, e.g., \Cref{sub:convergence_rates_of_the_solution_in_the_h_1_norm}).

\subsection{Spectral convergence} \label{sub:general_principles}

By analyzing the singular values of $\sfA$ directly,~\cref{eq:DiscreteStability} can sometimes be proven by showing that the spectrum of $\tilde{\sfA}$ converges to the spectrum of $\sfA$ at a fast enough rate.
The main takeaway from this section is that spectral convergence can be guaranteed by showing that $\tilde{\sfA}\to \sfA$ in the matrix maximum norm, $\|\cdot\|_{\max}$.
Before moving on, denote the $k$-smallest eigenvalue of a matrix $\sfM\in\R^{N\times N}$ as $\lambda_k(\sfM)$ and let $\ell(\sfM) = \max_{1\leq i \leq N}\,\#\{\sfM_{ij} \neq 0 \text{ where } 1\leq j \leq N\}$ be the maximum number of nonzero components in $\sfM$, taken across all individual rows.

\begin{proposition}
\label{prop:SpectralConvergence}
	Let $\sfM,\sfN\in\R^{N\times N}$ be symmetric matrices.
	Then, for each $k=1,\ldots,N$, it holds that
	\begin{equation}
		|\lambda_k(\sfM) - \lambda_k(\sfN)|
		\leq
		\|\sfM-\sfN\|_{\infty}
		\,.
	\end{equation}
\end{proposition}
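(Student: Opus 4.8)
The plan is to reduce the claim to two classical facts about symmetric matrices: the Courant--Fischer min--max characterization of eigenvalues (equivalently, Weyl's perturbation inequality) and the elementary bound $\|\sfE\|_2 \le \|\sfE\|_\infty$ valid whenever $\sfE$ is symmetric, where $\|\sfE\|_2 = \sup_{\|v\|_{\ell^2}=1}\|\sfE v\|_{\ell^2}$ denotes the spectral norm.

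First I would set $\sfE = \sfM - \sfN$, which is symmetric, and record Weyl's inequality: for symmetric $\sfN$ and $\sfE$ and every $k$,
\[
\lambda_k(\sfN) + \lambda_1(\sfE) \;\le\; \lambda_k(\sfN+\sfE) \;\le\; \lambda_k(\sfN) + \lambda_N(\sfE),
\]
with eigenvalues ordered increasingly. This is immediate from the Courant--Fischer formula $\lambda_k(\sfN+\sfE) = \min_{\dim U = k}\max_{0\neq v\in U} \frac{v^\top(\sfN+\sfE)v}{v^\top v}$ together with the two-sided bound $\lambda_1(\sfE)\,v^\top v \le v^\top \sfE v \le \lambda_N(\sfE)\,v^\top v$. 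Since $\sfM = \sfN+\sfE$, it follows that $\lambda_k(\sfM)-\lambda_k(\sfN)$ lies between $\lambda_1(\sfE)$ and $\lambda_N(\sfE)$, hence $|\lambda_k(\sfM)-\lambda_k(\sfN)| \le \max\{|\lambda_1(\sfE)|,|\lambda_N(\sfE)|\} = \|\sfE\|_2$, the last equality because $\sfE$ is symmetric.

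Second I would bound $\|\sfE\|_2$ by $\|\sfE\|_\infty$. Because $\sfE$ is symmetric, $\|\sfE\|_2$ equals the spectral radius of $\sfE$, so it suffices to show $|\mu|\le\|\sfE\|_\infty$ for every eigenvalue $\mu$ of $\sfE$. By Gershgorin's circle theorem there is a row index $i$ with $|\mu - \sfE_{ii}| \le \sum_{j\neq i}|\sfE_{ij}|$, whence $|\mu| \le \sum_j |\sfE_{ij}| \le \max_i \sum_j |\sfE_{ij}| = \|\sfE\|_\infty$. (Alternatively one may use $\|\sfE\|_2 \le \sqrt{\|\sfE\|_1\|\sfE\|_\infty}$ together with $\|\sfE\|_1 = \|\sfE\|_\infty$ for symmetric $\sfE$.) Chaining the two estimates yields $|\lambda_k(\sfM)-\lambda_k(\sfN)| \le \|\sfE\|_2 \le \|\sfE\|_\infty = \|\sfM-\sfN\|_\infty$.

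There is essentially no serious obstacle here; the only points requiring a little care are getting the direction of the Courant--Fischer inequalities right so as to obtain a genuinely two-sided bound on $\lambda_k(\sfM)-\lambda_k(\sfN)$, and recalling that for symmetric matrices the operator $2$-norm coincides with the spectral radius and is controlled by the more readily computable $\ell^\infty$-norm. The result is, in effect, Weyl's inequality combined with this norm comparison.
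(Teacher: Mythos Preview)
Your proof is correct and essentially the same as the paper's: both obtain the two-sided bound $\lambda_1(\sfE)\le\lambda_k(\sfM)-\lambda_k(\sfN)\le\lambda_N(\sfE)$ via Courant--Fischer (which you package as Weyl's inequality) and then bound the extreme eigenvalues of $\sfE$ by $\|\sfE\|_\infty$ through Gershgorin. The only cosmetic difference is that you route the final step through the spectral norm $\|\sfE\|_2$, whereas the paper applies Gershgorin directly to $\lambda_1(\sfE)$ and $\lambda_N(\sfE)$.
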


The proof is placed in \Cref{app:appendix}.
Because $\sfA$ and $\tilde{\sfA}$ have the same sparsity pattern, the next result follows trivially.
Note that when $n=2$, $\ell(\sfA-\tilde{\sfA}) \leq 7$, and when $n=3$, $\ell(\sfA-\tilde{\sfA}) \leq 15$,
whereas $\ell(\sfA)$ can be larger depending on the structure of the macro-mesh.

\begin{corollary}
\label{cor:BandwidthBound}
	Let $\sfA$ and $\tilde{\sfA}$ be the true and surrogate stiffness matrices in~\cref{eq:DiscreteVariationalProblems}, respectively.
	If both $\sfA$ and $\tilde{\sfA}$ are real symmetric matrices, then
		\begin{equation}
		|\lambda_k(\sfA) - \lambda_k(\tilde{\sfA})|
		\leq
		\ell(\sfA-\tilde{\sfA})\cdot
		\|\sfA-\tilde{\sfA}\|_{\max}
		\,,
		\qquad
				k=1,\ldots,N
		.
	\label{eq:BandwidthBound}
	\end{equation}
														\end{corollary}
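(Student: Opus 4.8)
The plan is to combine \Cref{prop:SpectralConvergence} with a straightforward bound relating the $\ell^\infty$-norm of a sparse matrix to its $\max$-norm. Since $\sfA$ and $\tilde{\sfA}$ are both assumed real symmetric, so is their difference $\sfM := \sfA - \tilde{\sfA}$. Applying \Cref{prop:SpectralConvergence} with this choice of $\sfM$ (and $\sfN = 0$, or rather directly with $\sfM = \sfA$, $\sfN = \tilde{\sfA}$) gives, for each $k = 1,\ldots,N$,
\begin{equation}
	|\lambda_k(\sfA) - \lambda_k(\tilde{\sfA})|
	\leq
	\|\sfA - \tilde{\sfA}\|_\infty
	\,.
\end{equation}

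The remaining step is to bound $\|\sfA - \tilde{\sfA}\|_\infty$ by $\ell(\sfA - \tilde{\sfA}) \cdot \|\sfA - \tilde{\sfA}\|_{\max}$. This follows from the definitions: for any matrix $\sfM \in \R^{N \times N}$,
\begin{equation}
	\|\sfM\|_\infty
	=
	\max_i \sum_j |\sfM_{ij}|
	=
	\max_i \sum_{j \,:\, \sfM_{ij} \neq 0} |\sfM_{ij}|
	\leq
	\max_i \Big( \#\{j : \sfM_{ij} \neq 0\} \cdot \max_j |\sfM_{ij}| \Big)
	\leq
	\ell(\sfM) \cdot \|\sfM\|_{\max}
	\,,
\end{equation}
where in the last inequality I use that the number of nonzero entries in any single row is at most $\ell(\sfM)$ and that each entry is at most $\|\sfM\|_{\max}$ in absolute value. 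Chaining the two displays with $\sfM = \sfA - \tilde{\sfA}$ yields~\cref{eq:BandwidthBound}.

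There is essentially no obstacle here; the result is, as the text says, a trivial corollary. The only point worth a sentence of care is why $\sfA$ and $\tilde{\sfA}$ share a sparsity pattern, so that $\ell(\sfA - \tilde{\sfA})$ is well controlled — but this is given by the construction in~\cref{eq:SurrogateStiffnessMatrix}, where $\tilde{\sfA}_{ij}$ is nonzero only where $\sfA_{ij}$ is (the surrogate replaces nonzero stencil values with polynomial approximations and leaves structural zeros untouched), and it is precisely the fact recorded in the remark preceding the corollary that $\ell(\sfA - \tilde{\sfA}) \leq 7$ for $n = 2$ and $\leq 15$ for $n = 3$. Thus the proof is just the two inequalities above, invoking \Cref{prop:SpectralConvergence} for the first.
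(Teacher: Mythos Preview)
Your proof is correct and is precisely the argument the paper has in mind: the paper simply states that the corollary ``follows trivially'' from \Cref{prop:SpectralConvergence} and the shared sparsity pattern of $\sfA$ and $\tilde{\sfA}$, and you have spelled out exactly that two-step chain ($|\lambda_k(\sfA)-\lambda_k(\tilde{\sfA})|\leq\|\sfA-\tilde{\sfA}\|_\infty\leq\ell(\sfA-\tilde{\sfA})\cdot\|\sfA-\tilde{\sfA}\|_{\max}$).
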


\begin{remark}
	As stated above, if $\|\sfA-\tilde{\sfA}\|_{\max} \to 0$ fast enough, then \Cref{cor:BandwidthBound} can be used in proving the stability condition~\cref{eq:DiscreteStability}.
		However,~\cref{eq:BandwidthBound} is generally a very pessimistic bound and, when available, we recommend using more direct means to prove discrete stability (see, e.g., \Cref{cor:Coercivity}).
	Nevertheless, this result illustrates the importance of controlling $\|\sfA-\tilde{\sfA}\|_{\max}$, which is a central feature in all of the coming analysis.
\end{remark}

\subsection[Controlling the consistency error in the max norm with the variable coefficient Poisson equation]{Controlling $\|\sfA-\tilde{\sfA}\|_{\max}$ with the variable coefficient Poisson equation}

Before we begin, some new notation is required.
For any tensor $K:\Omega \to \R^{n\times n}$, define $\|K\|_{L^\infty(\Omega)} = \max_{a,b} \|K_{ab}\|_{L^\infty(\Omega)}$, likewise, for any $r\geq 0$, define $|K|_{W^{r+1,\infty}(T)} = \max_{a,b} |K_{ab}|_{W^{r+1,\infty}(T)}$.
From now on, the notation $A \lesssim B$ will be used when two mesh-dependent quantities $A$ and $B$ satisfy an inequality $A \leq C B$, where $C$ some positive $H$-independent constant.
Likewise, when $A \lesssim B$ and $B \lesssim A$, we write $A \eqsim B$.
Recall that for a macro-mesh $\mesh_H$, the diameter of a single element $T\in\mesh$ is denoted $H_T$ and the mesh size is denoted $H = \max_{T\in\mesh_H} H_T$.
We also denote the fine-scale element diameter $h_T = 2^{-m}H_T$, for each $T \in \mesh_H$ and $h = 2^{-m}H$.
\begin{lemma}
\label{lem:InfinityNormConvergence}
	Let $\sfA$ and $\tilde{\sfA}$, respectively, be the true and surrogate stiffness matrices corresponding to the bilinear form~\cref{eq:BilinearFormPoisson}.
	Namely, let each component of $\sfA$ be given by~\cref{eq:TrueStiffnessMatrixPoisson} and each component of $\tilde{\sfA}$ be defined by~\cref{eq:SurrogateStiffnessMatrix} with $G(x,u(y),v(y)) := {\nabla u(y)^\top K(x) \nabla v(y)}$.
	Fix $T\in \mesh_H$ and $0\leq r\leq q$.
	If $K_{ab}|_T\in W^{r+1,\infty}(T)$ for each index $a,b$, then
	\begin{subequations}
	\begin{equation}
		\big\|\sfA - \tilde{\sfA}\big\|_{\max,T_m}
		\lesssim
		h_T^{n-2} H_T^{r+1}|K|_{W^{r+1,\infty}(T)}
		\,,
	\label{eq:InfinityNormBoundLocal}
	\end{equation}
	where $\|\sfC\|_{{\max},T_m} = \max\big\{ |\sfC_{ij}| \,:\, x_i, x_j \in T_m \big\}$, for any matrix $\sfC$.
	Moreover, if each component $K_{ab}\in W^{r+1,\infty}(\mesh_H) = \prod_{T\in\mesh_H} W^{r+1,\infty}(T)$, then
	\begin{equation}
		\big\|\sfA - \tilde{\sfA}\big\|_{\max}
		\lesssim
		h^{n-2} H^{r+1}|K|_{W^{r+1,\infty}(\mesh_H)}
		\,.
	\label{eq:InfinityNormBoundGlobal}
	\end{equation}
	\end{subequations}
	\end{lemma}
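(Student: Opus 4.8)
The plan is to reduce the matrix bound to a pointwise estimate on the difference between a stencil function and its polynomial projection, and then invoke a standard polynomial approximation result. First I would fix an arbitrary macro-element $T\in\mesh_H$ and a level $m$, and recall from~\cref{eq:StencilFunction2StiffnessMatrixHHG} and~\cref{eq:SurrogateStiffnessMatrix} that for every pair $x_i,x_j\in T_m$ with $x_i$ or $x_j\in\mathring{T}_m$ and $\delta=x_j-x_i\in\scD(\mathring{T}_m)$, we have $\sfA_{ij}=\Phi^\delta_T(x_i)$ and $\tilde{\sfA}_{ij}=\tilde{\Phi}^\delta_T(x_i)=\Pi^\delta_T\Phi^\delta_T(x_i)$. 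Since the number of admissible directions $\#\scD(\mathring{T}_m)$ is bounded independently of $m$ and $H$ (at most $7$ when $n=2$, at most $15$ when $n=3$), it suffices to bound $\max_{\delta}\,\bigl\|\Phi^\delta_T-\Pi^\delta_T\Phi^\delta_T\bigr\|_{L^\infty(\overline{T}_\delta)}$. (The boundary-vs-boundary entries, first case of~\cref{eq:SurrogateStiffnessMatrix}, contribute nothing to $\sfA-\tilde{\sfA}$ since there $\tilde{\sfA}_{ij}=\sfA_{ij}$ exactly.)

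Next I would estimate the projection error using the $L^\infty$-continuity of $\Pi^\delta_T$ together with a Bramble--Hilbert / Deny--Lions argument. Because $\Pi^\delta_T$ is a projection onto $\mcP_q(T_\delta)$ and is $L^\infty$-bounded with an $H$-independent constant (by shape-regularity of $\mesh_H$, after scaling to a reference configuration), one has for any $0\le r\le q$
\begin{equation*}
	\bigl\|\Phi^\delta_T-\Pi^\delta_T\Phi^\delta_T\bigr\|_{L^\infty(\overline{T}_\delta)}
	\lesssim
	H_T^{\,r+1}\,|\Phi^\delta_T|_{W^{r+1,\infty}(T_\delta)}\,,
\end{equation*}
the diameter of $T_\delta$ being comparable to $H_T$. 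So the remaining task is to control the seminorm $|\Phi^\delta_T|_{W^{r+1,\infty}(T_\delta)}$ in terms of the data $|K|_{W^{r+1,\infty}(T)}$ and the fine-scale length $h_T$. Returning to the defining integral~\cref{eq:StencilFunction}, with $G(x,u,v)=\nabla u^\top K(x)\nabla v$ and $\phi$ identified with a fine-scale vertex function on level $m$ supported on $\Omega_\delta$, differentiating under the integral sign $r+1$ times in $x$ moves all the derivatives onto $K$, giving a pointwise bound
\begin{equation*}
	\bigl|\partial^\beta_x\Phi^\delta_T(x)\bigr|
	\le
	|K|_{W^{r+1,\infty}(T)}\int_{\Omega_\delta}\bigl|\nabla\phi_\delta(y)\bigr|\,\bigl|\nabla\phi(y)\bigr|\dd y
	\qquad\text{for }|\beta|=r+1\,.
\end{equation*}
The key scaling fact is then that this last integral is $O(h_T^{\,n-2})$: the gradients of the fine-scale hat functions scale like $h_T^{-1}$, while $|\Omega_\delta|\eqsim h_T^{\,n}$, so the product integrates to $h_T^{-2}\cdot h_T^{\,n}=h_T^{\,n-2}$. (This is the familiar statement that an entry of the Laplacian stiffness matrix on a quasi-uniform mesh of size $h_T$ is $O(h_T^{\,n-2})$.) Combining the three displays yields~\cref{eq:InfinityNormBoundLocal}.

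The global bound~\cref{eq:InfinityNormBoundGlobal} follows by taking the maximum over $T\in\mesh_H$: since $h_T=2^{-m}H_T\le 2^{-m}H=h$ and $H_T\le H$, each local bound is dominated by $h^{n-2}H^{r+1}|K|_{W^{r+1,\infty}(\mesh_H)}$, and $\|\sfA-\tilde{\sfA}\|_{\max}=\max_{T}\|\sfA-\tilde{\sfA}\|_{\max,T_m}$ because every nonzero entry of $\sfA-\tilde{\sfA}$ lives in some $T_m$. The main obstacle is the rigorous bookkeeping in the second step: one must justify differentiation under the integral sign (legitimate since $\phi$, $\phi_\delta$ are Lipschitz with bounded support and $K|_T\in W^{r+1,\infty}$), and, more delicately, verify that the $L^\infty$-stability constant of $\Pi^\delta_T$ and the implied constants in the Bramble--Hilbert estimate are genuinely independent of $H$ and $m$ — this is where shape-regularity of $\mesh_H$ and the self-similarity of the fine-scale lattices (so that $T_\delta$ has shape-regular, $H$-independent geometry after rescaling by $H_T$) are essential.
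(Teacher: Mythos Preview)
Your proof is correct but follows a genuinely different route from the paper's. The paper does not apply Bramble--Hilbert to the stencil function $\Phi^\delta_T$ at all. Instead, it inserts and subtracts a degree-$r$ Lagrange interpolant $\mcI_T K$ of the coefficient tensor on the \emph{fixed} simplex $T$: writing $K = \mcI_T K + (K-\mcI_T K)$, the polynomial part of each stencil function is reproduced exactly by $\Pi^\delta_T$ (by \Cref{cor:Interpolation}), so it cancels in $\sfA_{ij}-\tilde{\sfA}_{ij}$. What remains is $\beta_{ij}(x_i) - (\Pi^\delta_T\beta_{ij})(x_i)$ with $\beta_{ij}(x)=\int_{\Omega_\delta}\nabla\phi_\delta^\top[K-\mcI_T K](x+y)\nabla\phi\,\dd y$, and both terms are bounded directly via $\|K-\mcI_T K\|_{L^\infty(T)}\lesssim H_T^{r+1}|K|_{W^{r+1,\infty}(T)}$ and the same $h_T^{n-2}$ scaling you use. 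The paper also reduces to the off-diagonal case $i\neq j$ by invoking the zero row sum property~\cref{eq:ZeroRowSumPropertySurrogateDifference}, which you do not need since your argument covers $\delta=0$ uniformly.

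The trade-off is this: your approach is conceptually cleaner (one approximation step, applied to $\Phi^\delta_T$ itself), but it pushes the polynomial-approximation constant onto the $m$-dependent domain $T_\delta$, which is precisely the delicate point you flag at the end. The paper's approach sidesteps this entirely by interpolating $K$ on the fixed macro-element $T$, so the only $m$-dependence that must be controlled is the $L^\infty$-operator norm of $\Pi^\delta_T$ --- a strictly weaker requirement than a uniform Bramble--Hilbert bound on $T_\delta$. Both proofs rely on the same two analytic ingredients ($L^\infty$-stability of $\Pi^\delta_T$ and the $h_T^{n-2}$ scaling of $\|\nabla\phi_\delta\cdot\nabla\phi\|_{L^1(\Omega_\delta)}$), so the difference is one of bookkeeping rather than substance.
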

\begin{proof}
	We prove only~\cref{eq:InfinityNormBoundLocal},~\cref{eq:InfinityNormBoundGlobal} then follows immediately.
	Recall~\cref{eq:SurrogateStiffnessMatrix} and fix $T\in\mesh_H$.
	Let $i$ and $j$ be the indices of the maximal value $|\sfA_{ij}-\tilde{\sfA}_{ij}| = \big\|\sfA - \tilde{\sfA}\big\|_{\max,T_m}$.
		Next, because the theorem trivially holds in the degenerate case $\big\|\sfA - \tilde{\sfA}\big\|_{\max,T_m} = 0$, we proceed under the assumption that $\sfA_{ij} \neq \tilde{\sfA}_{ij}$.
	Notably, it follows from~\cref{cor:Interpolation} that if each $K_{ab}|_T \in \poly_q(T)$, then $\tilde{\sfA}_{ij} = \sfA_{ij}$ and, therefore, we find ourselves in the scenario where the diffusion tensor $K|_T$ is not a polynomial (of degree at most $q$).
	Here, we may also freely assume that $i\neq j$ because of~\cref{eq:ZeroRowSumPropertySurrogateDifference}.
	Indeed, for each $i$,
		$
		|\sfA_{ii}-\tilde{\sfA}_{ii}|
		\leq
								\sum_{j\neq i}
		|\sfA_{ij}-\tilde{\sfA}_{ij}|
		\leq
		\ell(\sfA-\tilde{\sfA})\cdot \max_{j\neq i}{|\sfA_{ij}-\tilde{\sfA}_{ij}|}
			$.
	
	To fix notation in the remainder of the proof, we take $\phi = \phi_i$, and express
	\begin{equation}
		\tilde{\sfA}_{ij}
						=
		\bigg[
		\Pi^\delta_T
		\int_{\Omega_{\delta}} \nabla \phi_{\delta}(y)^\top K(\cdot+y)\, \nabla \phi(y)\hspace{0.5pt} \dd y
		\bigg](x_i)
						,
	\end{equation}
	for some nonzero $\delta\in\scD(T_m)$.
	Let $\mcI_T:C^0(\overline{T})\to \mcP_r(T)$ be the local Lagrange interpolant and define $\big[\mcI_T^{n\times n} K\big]_{ab} = \mcI_T K_{ab}$, for each index $a,b$.
	Splitting the two matrix entries $\sfA_{ij}$ and $\tilde{\sfA}_{ij}$ into polynomial and non-polynomial parts and rewriting
	\begin{align}
		\int_\Omega \nabla \phi_j(x)^\top \big[K-\mcI_T^{n\times n} K\big]\msspace(x)\sspace \nabla \phi_i(x) \dd x
		&=
		\int_{\Omega_{\delta}} \nabla \phi_{\delta}(y)^\top \big[K-\mcI_T^{n\times n} K\big]\msspace(x_i+y)\sspace \nabla \phi(y) \dd y
		\,,
		\\
		\int_\Omega \nabla \phi_j(x)^\top \big[\mcI_T^{n\times n} K\big](x)\sspace \nabla \phi_i(x) \dd x
		&=
		\int_{\Omega_{\delta}} \nabla \phi_{\delta}(y)^\top \big[\mcI_T^{n\times n} K\big] (x_i+y)\sspace \nabla \phi(y)\hspace{0.5pt} \dd y
		\,,
	\end{align}
	we find that
	\begin{equation}
	\begin{aligned}
		\sfA_{ij} &= \int_\Omega \nabla \phi_j^\top \,\mcI_T^{n\times n} K\; \nabla \phi_i \dd x + \int_{\Omega_{\delta}} \nabla \phi_{\delta}(y)^\top \big[K-\mcI_T^{n\times n} K\big](x_i+y)\sspace \nabla \phi(y)\hspace{0.5pt} \dd y,\\
		\tilde{\sfA}_{ij} &= \int_\Omega \nabla \phi_j^\top \,\mcI_T^{n\times n} K\; \nabla \phi_i \dd x + \bigg[\Pi^\delta_T\int_{\Omega_{\delta}} \nabla \phi_{\delta}(y)^\top \big[K-\mcI_T^{n\times n} K\big](\cdot+y)\sspace \nabla \phi(y)\hspace{0.5pt} \dd y\bigg](x_i)
				.
	\end{aligned}
	\label{eq:InterpolationStep}
	\end{equation}
	Upon canceling the first two terms in the expressions above, we arrive at the inequality 
	\begin{equation}
		\big|\sfA_{ij} - \tilde{\sfA}_{ij}\big| \leq |\beta_{ij}(x_i)| + |(\Pi^\delta_T\beta_{ij})(x_i)|
		\,,
	\end{equation}
		where $\beta_{ij}(x) = \int_{\Omega_{\delta}} \nabla \phi_{\delta}(y)^\top \big[K-\mcI_T^{n\times n} K\big](x+y)\sspace \nabla \phi(y)\hspace{0.5pt} \dd y$.
		Recall that the projection $\Pi^\delta_T:C^0(\overline{T}_\delta)\to\mcP(T_\delta)$ is continuous in the $L^\infty(\Omega)$ norm.
	Therefore,
	\begin{gather}
				|(\Pi^\delta_T\beta_{ij})(x_i)| \leq \|\Pi^\delta_T\beta_{ij}\|_{L^\infty(T_\delta)} \lesssim  \|\beta_{ij}\|_{L^\infty(T_\delta)}\lesssim
		\|K-\mcI_T^{n\times n} K\|_{L^\infty(T)} \|\nabla\phi_{\delta} \cdot \nabla\phi\|_{L^1(\Omega_\delta)}
				\,.
	\end{gather}
	A standard scaling argument shows that $\|\nabla\phi_{\delta} \cdot \nabla\phi\|_{L^1(\Omega_\delta)} \lesssim h_T^{n-2}$.
	This, together with the well-known property $\|K_{ab}-\mcI_T K_{ab}\|_{L^\infty(T)} \lesssim H^{r+1}_T|K_{ab}|_{W^{r+1,\infty}(T)}$, yields the sufficient result
	\begin{align}
	|\sfA_{ij} - \tilde{\sfA}_{ij}|
	\lesssim
	h_T^{n-2}H^{r+1}_T |K|_{W^{r+1,\infty}(T)}
	\,.
	\end{align}
\end{proof}

\begin{remark}
The proof of \Cref{lem:InfinityNormConvergence} can be read as a blueprint which extends to the settings of the bilinear forms $a_2(\cdot,\cdot)$, $a_3(\cdot,\cdot)$, defined in \cref{eq:BilinearFormLinearElasticity,eq:SplitBilinearFormspLaplace}. 
Indeed, when $a_2(\cdot,\cdot)$ is considered, the only significant modification to the proof above is that an interpolation operator $\mcI_T:C^0(\overline{T})\to \mcP_r(T)$ must be introduced for each Lam\'e parameter $\mu$ and $\lambda$.
The adaption to the setting $a(\cdot,\cdot) = a_3(\cdot,\cdot)$, is obvious.
Ultimately,
\begin{equation}
	\big\|\sfA - \tilde{\sfA}\big\|_{\max}
	\lesssim
	h^{n-2} H^{r+1}
	\cdot
	\left\{
	\begin{alignedat}{3}
																		\,& 		\big|\lambda\big|_{W^{r+1,\infty}(\mesh_H)} + \big|\mu\big|_{W^{r+1,\infty}(\mesh_H)}
						\qquad
		&&
		\text{if } 
		a(\cdot,\cdot) = a_2(\cdot,\cdot)
		\,,
		\\
		& \big|\sspace|\nabla\tilde{u}|^{p-2}\big|_{W^{r+1,\infty}(\mesh_H)}
				\qquad
		&&
		\text{if } 
		a(\cdot,\cdot) = a_3(\cdot,\cdot)
		\,.
	\end{alignedat}
	\right.
\label{eq:OtherMaxNormBounds}
\end{equation}
Moreover, the proof may be easily modified to permit surrogates $\tilde{\sfA}$ without the zero row sum property.
Likewise, scenarios involving fewer derivatives (which generally do not possess the zero row sum property), e.g., $a(\cdot,\cdot) = m(\cdot,\cdot)$, have similar bounds but invoke a different scaling in $h$.
\end{remark}

\section{\textit{A priori} error estimation for the variable coefficient Poisson equation} \label{sec:a_priori_error_estimation}

In this section, we present a thorough analysis of a surrogate discretization of the variable coefficient Poisson equation.
Given a load $f\in L^2(\Omega)$ and symmetric positive-definite tensor $K:\Omega \to \R^{n\times n}$, the corresponding weak form may be written as follows.
\begin{equation}
	\text{Find } u\in H^1_0(\Omega)
	\text{ satisfying }
	\quad
					a(u,v) = F(v)
	\quad
	\text{for all }
	v\in H^1_0(\Omega)
	\,,
\label{eq:PoissonWeakForm}
\end{equation}
where $a(u,v) = \int_\Omega \nabla u^\top K\sspace \nabla v \dd x$ and $F(v) = \int_\Omega f\hspace{0.25pt} v \dd x$.
As done in \Cref{sec:the_zero_row_sum_property}, define $V_{h} = \{ v \in H^1_0(\Omega) : v|_t \in \mcP_1(t) \text{ for each } t\in\mcS^m(\mesh_H)\}$ and let the corresponding vertex function basis be $\{\phi_i\}$.

\subsection{Coercivity} \label{sub:coercivity}

In the present setting, observe that each $v_h\in V_h$ can be expressed as $v_h(x) = \sum_i v_h(x_i)\phi_i(x)$.
Therefore, due to the zero row/column sum property~\cref{eq:ZeroRowSumPropertySurrogateDifference}, we find
\begin{equation}
\label{eq:DifferenceIdentity}
	\begin{aligned}
		\asurr(\fesol{v}, \fesol{w}) - \a(\fesol{v}, \fesol{w})
		&=
		\sum_{i,j} 
		\big(\tilde{\sfA}_{ij} - \sfA_{ij}\big)\sspace v_h(x_i)\sspace w_h(x_j)
												\\
		&=
		\frac{1}{2}\sum_{i\neq j} \big(\sfA_{ij} - \tilde{\sfA}_{ij}\big) (v_h(x_i) - v_h(x_j))\sspace (w_h(x_i) - w_h(x_j))
		\,.
	\end{aligned}
\end{equation}
Due to the mutual sparsity of the matrices $\tilde{\sfA}$ and $\sfA$, every nonzero term in the sum above can be rewritten as
$
	\big(\sfA_{ij} - \tilde{\sfA}_{ij}\big) (v_h(x_i) - v_h(x_i+\delta))\sspace (w_h(x_i) - w_h(x_i+\delta))
	,
$
for some nonzero $\delta$.
Because $|\delta| \eqsim h$ by construction, one easily arrives at the following upper bound:
\begin{align}
	\a(\fesol{v}, \fesol{w}) - \asurr(\fesol{v}, \fesol{w})
	\lesssim
	h^{2-n}
	\big\|\sfA - \tilde{\sfA}\big\|_{{\max}}\sspace \|\nabla v_h\|_{0}\hspace{1pt} \|\nabla w_h\|_{0}
				\,.
						\label{eq:FiniteDifferenceBound}
\end{align}
We now arrive at the main result of this subsection.

\begin{theorem}
\label{cor:Coercivity}
	Let $0\leq r\leq q$.
		Assume that $a(\cdot,\cdot)$ is coercive and $K\in \big[W^{r+1,\infty}(\mesh_H)\big]^{n\times n}$.
	Then, for any fine enough macro-mesh $\mesh_H$, the surrogate bilinear form $\tilde{a}:V_h\times V_h \to \R$ is also coercive.
\end{theorem}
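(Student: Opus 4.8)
The plan is to control the surrogate perturbation $a(v_h,v_h)-\tilde a(v_h,v_h)$ uniformly in the fine refinement level $m$, using the two estimates already in hand, and then to absorb it into the coercivity constant of $a(\cdot,\cdot)$ by taking the macro-mesh size $H$ small.

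First I would set $v_h = w_h$ in the finite-difference bound~\cref{eq:FiniteDifferenceBound}, which gives
\[
	a(v_h,v_h) - \tilde a(v_h,v_h) \lesssim h^{2-n}\,\big\|\sfA-\tilde\sfA\big\|_{\max}\,\|\nabla v_h\|_0^2 ,
\]
and then invoke the global max-norm estimate~\cref{eq:InfinityNormBoundGlobal} of \Cref{lem:InfinityNormConvergence}, $\|\sfA-\tilde\sfA\|_{\max} \lesssim h^{n-2} H^{r+1}|K|_{W^{r+1,\infty}(\mesh_H)}$. The essential point is that the factors $h^{2-n}$ and $h^{n-2}$ cancel exactly, so that
\[
	a(v_h,v_h) - \tilde a(v_h,v_h) \;\leq\; C\, H^{r+1}\, |K|_{W^{r+1,\infty}(\mesh_H)}\,\|\nabla v_h\|_0^2 \qquad \text{for all } v_h\in V_h,
\]
with $C>0$ independent of both $H$ and the level $m$.

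Next I would use the assumed coercivity of $a(\cdot,\cdot)$ on $V = H^1_0(\Omega)$ — which restricts to $V_h\subset V$ — together with the Poincaré--Friedrichs inequality (equivalently, the equivalence of $\|\cdot\|_1$ and $\|\nabla\,\cdot\,\|_0$ on $H^1_0(\Omega)$) to write $a(v_h,v_h)\geq \alpha\|v_h\|_1^2 \geq \alpha'\|\nabla v_h\|_0^2$ for some $\alpha'>0$. Combining this with the displayed bound yields
\[
	\tilde a(v_h,v_h) \;=\; a(v_h,v_h) - \big(a(v_h,v_h)-\tilde a(v_h,v_h)\big) \;\geq\; \big(\alpha' - C\, H^{r+1} |K|_{W^{r+1,\infty}(\mesh_H)}\big)\,\|\nabla v_h\|_0^2 .
\]
Choosing $H$ small enough that $C\, H^{r+1}|K|_{W^{r+1,\infty}(\mesh_H)} \leq \alpha'/2$ gives $\tilde a(v_h,v_h)\geq \tfrac{\alpha'}{2}\|\nabla v_h\|_0^2 \gtrsim \|v_h\|_1^2$ for all $v_h\in V_h$, i.e., coercivity of $\tilde a(\cdot,\cdot)$ with an $H$- and $m$-independent constant; in particular this verifies the discrete stability condition~\cref{eq:DiscreteStability}.

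There is essentially no analytic obstacle here beyond careful bookkeeping: the one subtlety worth flagging is that the smallness threshold on $H$ must not depend on the fine level $m$, and this is guaranteed precisely by the cancellation of the two powers of $h$ noted above, so that the perturbation is governed by the macro-mesh size alone. All the real work is already contained in \Cref{lem:InfinityNormConvergence}.
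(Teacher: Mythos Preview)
Your argument is correct and follows essentially the same route as the paper: both combine the finite-difference bound~\cref{eq:FiniteDifferenceBound} with \Cref{lem:InfinityNormConvergence} to show that the perturbation $a(v_h,v_h)-\tilde a(v_h,v_h)$ is $\mathcal{O}(H^{r+1})$ uniformly in $m$, and then absorb it into the coercivity constant of $a(\cdot,\cdot)$. The only cosmetic difference is that the paper restricts to the unit sphere in $H^1$ from the outset, whereas you keep $v_h$ general and normalize at the end; your explicit remark about the cancellation of the $h^{2-n}$ and $h^{n-2}$ factors (and hence the $m$-independence of the smallness threshold on $H$) is a useful point that the paper leaves implicit.
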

\begin{proof}
	Let $S = \{v\in H^1 \,:\, \|v\|_1 = 1\}$ be the surface of the unit ball in $H^1$.
	Recall that since $a(\cdot,\cdot)$ is coercive, there exists a coercivity constant $\alpha>0$ such that $a(v,v) \geq \alpha$ for all $v\in S$.
	Notice that $\alpha \leq a(v_h,v_h) \leq \tilde{a}(v_h,v_h) + |a(v_h,v_h) - \tilde{a}(v_h,v_h)|$ for all $v_h \in V_h\cap S$ and, therefore,
	\begin{equation}
		\alpha
		-
				|a(v_h,v_h) - \tilde{a}(v_h,v_h)|
		\leq
				\tilde{a}(v_h,v_h)
		\quad
		\text{for all } v_h \in V_h\cap S
		\,.
	\label{eq:Coercivity1}
	\end{equation}
	Here, the second term on the left may be bounded from above using~\cref{eq:FiniteDifferenceBound,lem:InfinityNormConvergence} as follows,
	\begin{equation}
			|a(v_h,v_h) - \tilde{a}(v_h,v_h)|
												\lesssim
		h^{2-n}\|\sfA - \tilde{\sfA}\|_{\max} \|\nabla v_h\|_0^2
		\lesssim
		H^{r+1}|K|_{W^{r+1,\infty}(\Omega)}
		\,.
		\label{eq:Coercivity2}
	\end{equation}
		Thus, for any small enough $H$, we see that $0 < \alpha - |a(v_h,v_h) - \tilde{a}(v_h,v_h)| \leq \tilde{a}(v_h,v_h)$, as necessary.
	\end{proof}

\subsection[Convergence of the surrogate solution in the H1 norm]{Convergence of the surrogate solution in the $H^1$ norm} \label{sub:convergence_rates_of_the_solution_in_the_h_1_norm}
The purpose of this subsection is to derive a mesh-dependent upper bound on the error in the surrogate solution $\tilde{u}_h$ of the form $\|u - \approxsol{u}\|_1 \leq C(K,\Omega,u)\sspace h + \tilde{C}(K,\Omega,u)H^{r+1}$.
In doing so, we choose to emphasize the primary difference from the classical $\|u - u_h\|_1 \leq C(K,\Omega,u)\sspace h$ error estimate by absorbing the coercivity and continuity constants (which depend on both $K$ and $\Omega$) into the $\lesssim$ symbol.
We begin with a particular version of the First Strang Lemma \cite{strang1972variational}.

\begin{lemma}
\label{lem:Strang1}
	Let $S = \{v\in H^1 \,:\, \|v\|_1 = 1\}$ be the surface of the unit ball in $H^1$.
	Assume that $\tilde{a}:V_{h}\times V_{h}\to \R$ is coercive.
	The following error estimate holds for the surrogate solution $\tilde{u}_h$ of the variable coefficient model problem~\cref{eq:PoissonWeakForm}:
	\begin{equation}
		\honenorm{\weaksol{u} - \approxsol{u}}
		\lesssim
		\inf_{w_h\in V_h}
		\Big[
						\|\weaksol{u} - w_h\|_1
			+
			\sup_{v_h\in V_h\cap S}
			|\tilde{a}(w_h,v_h) - a(w_h,v_h)|
		\Big]
		\,.
	\label{eq:Strang1}
	\end{equation}
\end{lemma}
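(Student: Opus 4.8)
This is the First Strang Lemma adapted to the surrogate setting, so I would follow the classical template: start from coercivity of $\tilde{a}(\cdot,\cdot)$ to control $\|w_h - \tilde{u}_h\|_1$ for an arbitrary $w_h\in V_h$, then combine with a triangle inequality $\|u-\tilde{u}_h\|_1 \leq \|u-w_h\|_1 + \|w_h-\tilde{u}_h\|_1$ and finally take the infimum over $w_h\in V_h$. The only twist relative to the textbook proof is that the consistency term must be measured against the \emph{true} bilinear form $a(\cdot,\cdot)$ using the exact solution, so I need to be careful about which variational identity to insert at which step.

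\textbf{Step 1 (coercivity gives a bound on the discrete difference).} Fix any $w_h\in V_h$ and set $e_h = \tilde{u}_h - w_h\in V_h$. By coercivity of $\tilde a$ (with some constant $\tilde\alpha>0$ absorbed into $\lesssim$), $\|e_h\|_1^2 \lesssim \tilde a(e_h,e_h) = \tilde a(\tilde u_h,e_h) - \tilde a(w_h,e_h)$. Using the surrogate variational problem~\cref{eq:SurrogateVF}, $\tilde a(\tilde u_h,e_h) = F(e_h)$, and using the continuous problem~\cref{eq:PoissonWeakForm}, $F(e_h) = a(u,e_h)$. Hence $\|e_h\|_1^2 \lesssim a(u,e_h) - \tilde a(w_h,e_h)$.

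\textbf{Step 2 (split off the approximation and consistency parts).} Write $a(u,e_h) - \tilde a(w_h,e_h) = \big[a(u,e_h) - a(w_h,e_h)\big] + \big[a(w_h,e_h) - \tilde a(w_h,e_h)\big]$. The first bracket is bounded by continuity of $a(\cdot,\cdot)$ by $\lesssim \|u-w_h\|_1\,\|e_h\|_1$. For the second bracket, since $e_h\in V_h$ and $e_h/\|e_h\|_1\in V_h\cap S$ whenever $e_h\neq 0$, we have $|a(w_h,e_h) - \tilde a(w_h,e_h)| \leq \big(\sup_{v_h\in V_h\cap S}|a(w_h,v_h)-\tilde a(w_h,v_h)|\big)\,\|e_h\|_1$. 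Dividing through by $\|e_h\|_1$ (the case $e_h=0$ being trivial) yields
\begin{equation}
	\|\tilde u_h - w_h\|_1
	\lesssim
	\|u - w_h\|_1 + \sup_{v_h\in V_h\cap S}|a(w_h,v_h) - \tilde a(w_h,v_h)|
	\,.
\end{equation}

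\textbf{Step 3 (triangle inequality and infimum).} Then $\|u-\tilde u_h\|_1 \leq \|u-w_h\|_1 + \|w_h - \tilde u_h\|_1 \lesssim \|u-w_h\|_1 + \sup_{v_h\in V_h\cap S}|a(w_h,v_h)-\tilde a(w_h,v_h)|$, and since $w_h\in V_h$ was arbitrary, taking the infimum over $w_h\in V_h$ gives~\cref{eq:Strang1}. I do not anticipate a genuine obstacle here; the only point requiring mild care is bookkeeping of the two variational identities in Step 1 and confirming that the continuity constant of $a(\cdot,\cdot)$ (finite because $K\in L^\infty$ and $a$ is continuous on $H^1_0\times H^1_0$) is legitimately hidden in the $\lesssim$ notation, as flagged in the paragraph preceding the lemma.
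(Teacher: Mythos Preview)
Your proof is correct and follows the standard template for Strang's First Lemma. The paper itself does not supply a proof of this lemma; it simply states it as ``a particular version of the First Strang Lemma'' with a citation to \cite{strang1972variational} and then moves directly to \Cref{thm:APrioriBoundH1}. Your argument is precisely the classical one the paper is invoking.
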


\begin{theorem}
\label{thm:APrioriBoundH1}
		Let $0\leq r\leq q$ and assume that $K \in \big[W^{r+1,\infty}(\Omega)\big]^{n\times n}$ is symmetric and positive definite with $\lambda_1(K)$ bounded away from zero almost everywhere.
		Let $u\in H^1(\Omega)$ and $\tilde{u}_h\in V_h$ be the unique solutions to~\cref{eq:ContinousVF,eq:SurrogateVF}, respectively, where $a(u,v) = \int_\Omega \nabla u^\top K\sspace \nabla v \dd x$ and $F(v) = \int_\Omega f\hspace{0.25pt} v \dd x$.
		Then, for any sufficiently fine macro-mesh $\mesh_H$, the following upper bound holds:
	\begin{equation}
		\|u - \approxsol{u}\|_{1} \lesssim h\sspace|u|_{2} + H^{r+1}|K|_{W^{r+1,\infty}(\Omega)} |u|_{1}
		\,.
	\label{APrioriBoundH1Local}
	\end{equation}
															\end{theorem}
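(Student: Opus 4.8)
The plan is to invoke the First Strang Lemma (\Cref{lem:Strang1}) with a well-chosen comparison function $w_h\in V_h$, and then to estimate the resulting consistency term by feeding the finite-difference bound~\cref{eq:FiniteDifferenceBound} into \Cref{lem:InfinityNormConvergence}. Before doing so, one must verify the hypothesis of \Cref{lem:Strang1}, namely coercivity of $\tilde{a}$: since $K$ is symmetric positive definite with $\lambda_1(K)$ bounded away from zero almost everywhere, the form $a(\cdot,\cdot)$ is coercive on $H^1_0(\Omega)$ (and continuous, since $K\in\big[W^{r+1,\infty}(\Omega)\big]^{n\times n}\hookrightarrow\big[L^\infty(\Omega)\big]^{n\times n}$); hence \Cref{cor:Coercivity} applies and, for every sufficiently fine macro-mesh $\mesh_H$, the surrogate form $\tilde{a}:V_h\times V_h\to\R$ is coercive. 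This both guarantees well-posedness of~\cref{eq:SurrogateVF} (so that $\tilde{u}_h$ exists and is unique) and yields
\begin{equation*}
	\|u - \tilde{u}_h\|_1 \lesssim \inf_{w_h\in V_h}\Big[\|u - w_h\|_1 + \sup_{v_h\in V_h\cap S}\big|\tilde{a}(w_h,v_h) - a(w_h,v_h)\big|\Big].
\end{equation*}

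Next I would fix $w_h = \mcI_h u$, where $\mcI_h$ is a standard interpolation operator onto the shape-regular mesh $\mcS^m(\mesh_H)$; the extra regularity $u\in H^2(\Omega)$ implicit in the appearance of $|u|_2$ in~\cref{APrioriBoundH1Local} (the usual consequence of elliptic regularity) makes $\mcI_h u$ well defined and gives the classical estimate $\|u - \mcI_h u\|_1\lesssim h\,|u|_2$ together with the $H^1$-stability bound $\|\nabla \mcI_h u\|_0\lesssim|u|_1$. For the consistency term, take any $v_h\in V_h\cap S$, so that $\|\nabla v_h\|_0\le\|v_h\|_1 = 1$. Applying~\cref{eq:FiniteDifferenceBound}—whose derivation crucially uses the zero row sum property~\cref{eq:ZeroRowSumPropertySurrogateDifference} of our construction and the fact that $|\delta|\eqsim h$—and then \Cref{lem:InfinityNormConvergence},
\begin{equation*}
	\big|a(\mcI_h u, v_h) - \tilde{a}(\mcI_h u, v_h)\big| \lesssim h^{2-n}\,\big\|\sfA - \tilde{\sfA}\big\|_{\max}\,\|\nabla \mcI_h u\|_0 \lesssim h^{2-n}\cdot h^{n-2}H^{r+1}|K|_{W^{r+1,\infty}(\mesh_H)}\cdot|u|_1,
\end{equation*}
and the factors of $h$ cancel exactly. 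Since $|K|_{W^{r+1,\infty}(\mesh_H)}\le|K|_{W^{r+1,\infty}(\Omega)}$ by geometric conformity, taking the supremum over $v_h$ and combining with the interpolation estimate gives $\|u - \tilde{u}_h\|_1\lesssim h\,|u|_2 + H^{r+1}|K|_{W^{r+1,\infty}(\Omega)}|u|_1$, which is exactly~\cref{APrioriBoundH1Local}.

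I expect the main obstacle to lie entirely in the consistency term, and in fact to be already absorbed into the preceding lemmas: the essential point is the cancellation $h^{2-n}\cdot h^{n-2}=1$, which is available only because the surrogate difference $a-\tilde{a}$, evaluated on discrete functions, telescopes (via the zero row sum property) into a weighted sum of products of \emph{first differences} $v_h(x_i)-v_h(x_j)$ and $w_h(x_i)-w_h(x_j)$. This is precisely what turns $\|\sfA-\tilde{\sfA}\|_{\max}$ paired with nodal values—which would otherwise cost a full power $h^{2-n}$—into something that matches the $h^{n-2}H^{r+1}$ decay of \Cref{lem:InfinityNormConvergence}. A minor bookkeeping point is to check that the smallness of $H$ required for coercivity of $\tilde{a}$ in \Cref{cor:Coercivity} depends only on the coercivity constant of $a$ and on $|K|_{W^{r+1,\infty}(\Omega)}$, so it can be combined harmlessly with any mild smallness of $h$ invoked for the $H^1$-stability of $\mcI_h$.
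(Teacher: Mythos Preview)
Your proposal is correct and follows essentially the same route as the paper: coercivity of $\tilde{a}$ via \Cref{cor:Coercivity}, Strang's First Lemma, the finite-difference bound~\cref{eq:FiniteDifferenceBound}, and \Cref{lem:InfinityNormConvergence}. The only cosmetic difference is that the paper fixes $w_h$ to be the Scott--Zhang interpolant $\mcS\hspace{-0.4pt}\mcZ_h u$, which directly furnishes both $\|u-\mcS\hspace{-0.4pt}\mcZ_h u\|_1\lesssim h\,|u|_2$ and the $H^1$-stability $|\mcS\hspace{-0.4pt}\mcZ_h u|_1\lesssim |u|_1$ while landing in $V_h\subset H^1_0(\Omega)$; your generic ``standard interpolation operator $\mcI_h$'' is fine provided you have these two properties in hand.
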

\begin{proof}

	With the assumptions above, $a(\cdot,\cdot)$ is coercive.
	Therefore, by \Cref{cor:Coercivity}, if the macro-mesh $\mesh_H$ is taken fine enough, then $\tilde{a}:V_{h}\times V_{h}\to \R$ is coercive.
	We now bound the right-hand side of~\cref{eq:Strang1}.
	Invoking~\cref{eq:FiniteDifferenceBound}, we find
	\begin{equation}
		\honenorm{\weaksol{u} - \approxsol{u}} \lesssim \|\weaksol{u} - w_h\|_1 + h^{2-n} \big\|\sfA - \tilde{\sfA}\big\|_{\max} \|\nabla w_h\|_{0}
		\,,
	\end{equation}
	for every $w_h\in V_h$.
	Setting $w_h = \mcS\hspace{-0.4pt}\mcZ_h u$, the Scott--Zhang interpolant of $u$ \cite{scott1990finite}, we see that
	\begin{equation}
		\honenorm{\weaksol{u} - \approxsol{u}}
		\lesssim
		\|\weaksol{u} - \mcS\hspace{-0.4pt}\mcZ_h u\|_1 + h^{2-n} \big\|\sfA - \tilde{\sfA}\big\|_{\max} |\mcS\hspace{-0.4pt}\mcZ_h u|_{1}
		\lesssim
		h\sspace|u|_{2} + h^{2-n}\big\|\sfA - \tilde{\sfA}\big\|_{\max} |u|_{1}
				\,.
	\end{equation}
	In order to finish the proof, recall that $h^{2-n} \|\sfA - \tilde{\sfA}\|_{\max} \lesssim H^{r+1}|K|_{W^{r+1,\infty}(\Omega)}$, by \Cref{lem:InfinityNormConvergence}.~
\end{proof}

\subsection[Convergence of the surrogate solution in the L2 norm]{Convergence of the surrogate solution in the $L^2$ norm} \label{sub:convergence_of_the_solution_in_the_}
In this subsection, we prove an $L^2$ error estimate of the form $\|u - \approxsol{u}\|_0 \leq C(K,\Omega,u)\sspace h^2 + \tilde{C}(K,\Omega,u)H^{r+1}$.
A second result, which elicits accelerated $H$-convergence, is also proved under the additional assumption $\sum_{x_i,\in T^\delta_m} \big[\Phi^\delta_T-\Pi^\delta_T\Phi^\delta_T\big](x_i) = 0$, where $T^\delta_m = T_m \cap \overline{T}_\delta$.
This is a property which naturally arises for the specific class of least-squares projections introduced in \Cref{sub:PLSR}.
Again, we emphasize the primary differences from the corresponding classical error estimate by absorbing the standard constants into the $\lesssim$ symbol.
\begin{theorem}
\label{thm:APrioriBoundL2}
	Under the conditions of \Cref{thm:APrioriBoundH1}, if $\Omega\subset \R^n$ is a convex domain,
	 			then the following additional upper bound on the error in the surrogate solution holds:
	\begin{subequations}
	\label{eq:APrioriBoundL2}
	\begin{equation}
		\|u - \approxsol{u}\|_0
		\lesssim
		h^2\sspace|u|_{2} + H^{r+1}|K|_{W^{r+1,\infty}(\Omega)} |u|_{1}
				\,.
	\label{eq:APrioriBoundL2_1}
	\end{equation}
		Moreover, if $r>0$ and $\sum_{x_i\in T^\delta_m} \big[\Phi^\delta_T-\Pi^\delta_T\Phi^\delta_T\big](x_i) = 0$, for each $T\in\mesh_H$ and $\delta\in\scD(\mathring{T}_m)$, then
	\begin{equation}
												\|u - \approxsol{u}\|_0
		\lesssim
		h^2\sspace|u|_{2} + H^{r+2}|K|_{W^{r+1,\infty}(\Omega)} \|\nabla u\|_{1}
		\,.
	\label{eq:APrioriBoundL2_2}
	\end{equation}
	\end{subequations}
\end{theorem}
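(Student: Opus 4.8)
The plan is a duality (Aubin--Nitsche) argument in which the surrogate consistency term $(\tilde{a}-a)(\cdot,\cdot)$ plays the role of the quadrature error in a Strang-type estimate. Since $\Omega$ is convex and $K\in\big[W^{r+1,\infty}(\Omega)\big]^{n\times n}$ is symmetric and uniformly positive definite, elliptic regularity gives $u\in H^2(\Omega)$, and for any $g\in L^2(\Omega)$ the dual problem $a(v,z)=\int_\Omega g\,v\dd x$ for all $v\in H^1_0(\Omega)$ admits a unique $z\in H^1_0(\Omega)\cap H^2(\Omega)$ with $\|z\|_2\lesssim\|g\|_0$. Because $n\le 3$ we have $H^2(\Omega)\hookrightarrow C^0(\overline{\Omega})$, so the nodal Lagrange interpolant $\mcI_h$ is defined on $u$ and $z$, satisfies $\|v-\mcI_h v\|_0\lesssim h^2|v|_2$ and $\|v-\mcI_h v\|_1\lesssim h|v|_2$, and maps $H^1_0(\Omega)$ into $V_h$. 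I would also record $\|\nabla\tilde{u}_h\|_0\lesssim|u|_1$, which follows from the coercivity of $\tilde{a}$ (\Cref{cor:Coercivity}) via $\tilde{\alpha}\|\tilde{u}_h\|_1^2\le\tilde{a}(\tilde{u}_h,\tilde{u}_h)=F(\tilde{u}_h)=a(u,\tilde{u}_h)\lesssim\|\nabla u\|_0\|\tilde{u}_h\|_1$.

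Taking $z$ to be the dual solution for $g=u-\tilde{u}_h$, one has $\|u-\tilde{u}_h\|_0^2=a(u-\tilde{u}_h,z)=a(u-\tilde{u}_h,z-\mcI_h z)+a(u-\tilde{u}_h,\mcI_h z)$. The first term is bounded by continuity and interpolation by $\|u-\tilde{u}_h\|_1\,h\,|z|_2$, and using \Cref{thm:APrioriBoundH1} together with $h\le H$ it is $\lesssim\big(h^2|u|_2+H^{r+2}|K|_{W^{r+1,\infty}(\Omega)}|u|_1\big)\|u-\tilde{u}_h\|_0$. For the second term, $\mcI_h z\in V_h$, so $a(u,\mcI_h z)=F(\mcI_h z)=\tilde{a}(\tilde{u}_h,\mcI_h z)$ and hence $a(u-\tilde{u}_h,\mcI_h z)=(\tilde{a}-a)(\tilde{u}_h,\mcI_h z)$. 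To obtain \eqref{eq:APrioriBoundL2_1} I would stop here: by \eqref{eq:FiniteDifferenceBound} and \Cref{lem:InfinityNormConvergence}, $|(\tilde{a}-a)(\tilde{u}_h,\mcI_h z)|\lesssim h^{2-n}\|\sfA-\tilde{\sfA}\|_{\max}\,\|\nabla\tilde{u}_h\|_0\,\|\nabla\mcI_h z\|_0\lesssim H^{r+1}|K|_{W^{r+1,\infty}(\Omega)}\,|u|_1\,\|u-\tilde{u}_h\|_0$; dividing by $\|u-\tilde{u}_h\|_0$ yields \eqref{eq:APrioriBoundL2_1}.

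For the sharper estimate \eqref{eq:APrioriBoundL2_2} I would split $(\tilde{a}-a)(\tilde{u}_h,\mcI_h z)=(\tilde{a}-a)(\tilde{u}_h-\mcI_h u,\mcI_h z)+(\tilde{a}-a)(\mcI_h u,\mcI_h z)$. The first piece is handled exactly as above, but now $\|\nabla(\tilde{u}_h-\mcI_h u)\|_0\lesssim\|u-\tilde{u}_h\|_1+h|u|_2\lesssim h|u|_2+H^{r+1}|K|_{W^{r+1,\infty}(\Omega)}|u|_1$ carries an extra order, so this piece is already $O(H^{r+2})$. The core is the second piece. Using the difference identity \eqref{eq:DifferenceIdentity}, the structure of $\sfA-\tilde{\sfA}$ in \eqref{eq:SurrogateStiffnessMatrix}, the symmetry relation \eqref{eq:SymmetricInterpolants}, and writing $D_\delta v(x)=v(x)-v(x+\delta)$ and $e^\delta_T=\Phi^\delta_T-\Pi^\delta_T\Phi^\delta_T$, I would rewrite it (up to a factor $\tfrac12$) as $\sum_{T\in\mesh_H}\sum_{\delta\in\scD(\mathring{T}_m)\setminus\{0\}}\sum_{x_i\in T^\delta_m}e^\delta_T(x_i)\,D_\delta u(x_i)\,D_\delta z(x_i)$, exploiting that $\mcI_h u$ and $\mcI_h z$ are nodal interpolants. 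For each $(T,\delta)$ the hypothesis $\sum_{x_i\in T^\delta_m}e^\delta_T(x_i)=0$ lets me subtract from $D_\delta u(x_i)D_\delta z(x_i)$ the mean value $\bar F$ of $F:=D_\delta u\,D_\delta z$ over $T$, so the inner sum is bounded by $\|e^\delta_T\|_{L^\infty(T_\delta)}\sum_{x_i\in T^\delta_m}|F(x_i)-\bar F|$. Here $\|e^\delta_T\|_{L^\infty(T_\delta)}\lesssim h_T^{n-2}H_T^{r+1}|K|_{W^{r+1,\infty}(T)}$ comes straight from the proof of \Cref{lem:InfinityNormConvergence}, while for the remaining $\ell^1$ sum I would pass to an $L^1(T)$ integral using $\|v_h\|_{L^1(T)}\eqsim h_T^n\sum_{x_i}|v_h(x_i)|$ for continuous piecewise-linears (plus a higher-order interpolation-error term), then apply the Poincar\'e--Wirtinger inequality $\|F-\bar F\|_{L^1(T)}\lesssim H_T\|\nabla F\|_{L^1(T)}$ and the translation estimates $\|D_\delta v\|_{L^2(T)}\lesssim h_T\|\nabla v\|_0$, $\|\nabla D_\delta v\|_{L^2(T)}\lesssim h_T|v|_2$. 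The powers of $h_T$ cancel, and after summing over $T$ and $\delta$ with Cauchy--Schwarz I arrive at $|(\tilde{a}-a)(\mcI_h u,\mcI_h z)|\lesssim H^{r+2}|K|_{W^{r+1,\infty}(\Omega)}\|\nabla u\|_1\,\|z\|_2\lesssim H^{r+2}|K|_{W^{r+1,\infty}(\Omega)}\|\nabla u\|_1\,\|u-\tilde{u}_h\|_0$. Collecting the three contributions and dividing by $\|u-\tilde{u}_h\|_0$ gives \eqref{eq:APrioriBoundL2_2}.

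I expect this last estimate to be the main obstacle: one must run the argument macro-element by macro-element on the genuine stencil-function domains $T_\delta$ rather than on $T$, control the interpolation-error term produced when the nodal sum is traded for the $L^1$ integral (it is of higher order precisely because the finite differences $D_\delta u$ and $D_\delta z$ are themselves $O(h_T)$), and verify that the gain comes out as exactly one full power of $H$ — which is why the zeroth discrete moment of $e^\delta_T$, and not merely its size $\|e^\delta_T\|_{\max}$, has to be exploited.
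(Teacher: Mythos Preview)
Your overall strategy is sound and the key mechanism for the improved estimate~\eqref{eq:APrioriBoundL2_2} --- subtracting a constant using the vanishing discrete zeroth moment of $\Phi^\delta_T-\Pi^\delta_T\Phi^\delta_T$ and then gaining a factor $H$ via a Poincar\'e--Wirtinger inequality --- is exactly what the paper uses. The two proofs differ mainly in the decomposition. The paper first writes $\|u-\tilde{u}_h\|_0\le\|u-u_h\|_0+\|u_h-\tilde{u}_h\|_0$, disposes of the first term by the classical Aubin--Nitsche argument, and then runs a \emph{discrete} duality on the second term: it introduces $w_h\in V_h$ with $a(w_h,v_h)=(u_h-\tilde{u}_h,v_h)_\Omega$ and obtains $\|u_h-\tilde{u}_h\|_0^2=\tilde{a}(w_h,\tilde{u}_h)-a(w_h,\tilde{u}_h)$. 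You instead run a single Aubin--Nitsche argument directly on $u-\tilde{u}_h$ with a continuous dual $z$, and split the consistency term via the interpolant $\mcI_h u$. Your route is a little more direct; the paper's route keeps the discretization error and the surrogate error cleanly separated.

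There is one technical point where the paper's argument is cleaner and yours needs a small repair. In the core sum you work with $F(x_i)=D_\delta u(x_i)\,D_\delta z(x_i)$ (point values of the continuous $u,z$) and propose to pass from $\sum_{x_i}|F(x_i)-\bar F|$ to $h^{-n}\|F-\bar F\|_{L^1(T)}$ via a quadrature/interpolation argument. But $F\in C^0\cap W^{1,1}$ only (in three dimensions the product of two $H^2$ functions need not lie in $W^{2,1}$), so a nodal-interpolation error bound for $F$ in $L^1$ is not immediate, and your ``higher-order interpolation-error term'' is not obviously controlled. The paper avoids this entirely: since $\tilde{u}_h$ and $w_h$ are piecewise linear, it rewrites the finite differences as $\nabla\tilde{u}_h(y_{i,\delta})\cdot\delta$ and $\nabla w_h(y_{i,\delta})\cdot\delta$, which are \emph{piecewise constant} on the fine mesh; the passage from the lattice sum to $h^{-n}\|(\nabla\tilde{u}_h\cdot\delta)(\nabla w_h\cdot\delta)-C\|_{L^1(T)}$ is then trivial. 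Only after reaching $L^1$ does the paper insert the continuous $u$ and $w$ by a triangle-inequality splitting and apply Poincar\'e--Wirtinger to $(\nabla u\cdot\delta)(\nabla w\cdot\delta)$. You can patch your argument in the same way: observe that $D_\delta u(x_i)=D_\delta(\mcI_h u)(x_i)$ and likewise for $z$, carry out the sum-to-integral step with the piecewise-linear interpolants, and only then trade $\mcI_h u,\mcI_h z$ for $u,z$.
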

\begin{proof}
	By the triangle inequality, $\|u-\tilde{u}_h\|_0 \leq \|u-u_h\|_0 + \|u_h-\tilde{u}_h\|_0$,
										where $u_h\in V_h$ is the discrete solution coming from~\cref{eq:DiscreteVF}.
	It can be shown that if $\Omega$ is convex, then $u\in H^2(\Omega)\cap H^1_0(\Omega)$; see, e.g., \cite{grisvard2011elliptic}.
															It then follows from standard arguments that $\|u-u_h\|_0 \lesssim h^2\sspace |u|_2$; see, e.g., \cite[Theorem~5.7.6]{brenner2007mathematical}.
		Therefore, we only need to analyze the term $\|u_h-\tilde{u}_h\|_0$.
	Since, $\|u_h-\tilde{u}_h\|_0 \leq \|u_h-\tilde{u}_h\|_1$, proceeding as in the proof of \Cref{thm:APrioriBoundH1}, we quickly arrive at~\cref{eq:APrioriBoundL2_1}.

	In order to prove~\cref{eq:APrioriBoundL2_2}, first define $w_h\in V_h$ satisfying $a(w_h,v_h) = (u_h-\tilde{u}_h,v_h)_\Omega$, for all $v_h\in V_h$.
	Observe that the exact solution of the problem $a(w,v) = (u_h-\tilde{u}_h,v)_\Omega$, for all $v\in H^1_0(\Omega)$, belongs to the space $H^2(\Omega)\cap H^1_0(\Omega)$, $\|w\|_2\lesssim\|u_h-\tilde{u}_h\|_{0}$.
	Moreover,
																				\begin{equation}
		\begin{aligned}
		\|u_h-\tilde{u}_h\|_0^2
		&=
		a(w_h,u_h-\tilde{u}_h)
		=
		\tilde{a}(w_h,\tilde{u}_h) - a(w_h,\tilde{u}_h)
		\\
		&=
		\frac{1}{2}
		\sum_{i\neq j} (\sfA_{ij} - \tilde{\sfA}_{ij}) (\tilde{u}_h(x_i) - \tilde{u}_h(x_j)) (w_h(x_i) - w_h(x_j))
		\,,
		\end{aligned}
	\label{eq:SecondTermManipulationAubinNitsche}
	\end{equation}
	where the final line follows from~\cref{eq:DifferenceIdentity}.
	As remarked previously, each nonzero term in this sum can be written as $\big(\sfA_{ij} - \tilde{\sfA}_{ij}\big) (\tilde{u}_h(x_i) - \tilde{u}_h(x_i+\delta))\sspace (w_h(x_i) - w_h(x_i+\delta))$, for some $T\in\mesh_H$ and nonzero $\delta \in \scD(T_m)$.
	We can make better use of this expression with the identity $v_h(x_i) - v_h(x_i+\delta) = \nabla v_h(y_{i,\delta})$, wherein each $y_{i,\delta}$ is chosen from the edge connecting $x_i$ and $x_i+\delta$, and with the relationship $\sfA_{ij} - \tilde{\sfA}_{ij} = \big[\Phi^\delta_T-\Pi^\delta_T\Phi^\delta_T\big](x_i)$, since $\sfA_{ij} - \tilde{\sfA}_{ij} \neq 0$ and $i\neq j$.
	With these observations in hand, we have
	\begin{align}
		2\|u_h&-\tilde{u}_h\|_0^2
		=
		\sum_{T\in\mesh_H}
		\sum_{\delta\in \scD(\mathring{T}_m)}
		\sum_{x_i\in T^\delta_m}
		\big[\Phi^\delta_T-\Pi^\delta_T\Phi^\delta_T\big](x_i)\,
		(\nabla \tilde{u}_h(y_{i,\delta}) \cdot \delta)(\nabla w_h(y_{i,\delta})\cdot \delta)
		\\
		&\leq
		\sum_{T\in\mesh_H}
		\sum_{\delta\in \scD(\mathring{T}_m)}
		\sum_{x_i\in T^\delta_m}
		\big[\Phi^\delta_T-\Pi^\delta_T\Phi^\delta_T\big](x_i)
		\Big(
		(\nabla \tilde{u}_h(y_{i,\delta}) \cdot \delta)(\nabla w_h(y_{i,\delta})\cdot \delta) - C
		\Big)
		\\
		&\lesssim
		h^{-n} 
		\|\sfA-\tilde{\sfA}\|_{\max}
		\sum_{T\in\mesh_H}
		\sum_{\delta\in \scD(\mathring{T}_m)}
		\|(\nabla \tilde{u}_h \cdot \delta)(\nabla w_h\cdot \delta)-C\|_{L^1(T)}
		\,.
	\end{align}
																						If we set the constant above to the following average value, $C := \frac{1}{\mathrm{vol}(T)} \int_T(\nabla u \cdot \delta)(\nabla w\cdot \delta) \dd x$, then $\|(\nabla u \cdot \delta)(\nabla w\cdot \delta) - C\|_{L^1(T)} \lesssim H_T\sspace |(\nabla u \cdot \delta)(\nabla w\cdot \delta)|_{W^{1,1}(T)}$.
											Therefore,
										\begin{align}
		\|(\nabla \tilde{u}_h \cdot \delta)&(\nabla w_h\cdot \delta)-C\|_{L^1(T)}
		\leq
		\|(\nabla \tilde{u}_h \cdot \delta)(\nabla w_h\cdot \delta) - (\nabla u \cdot \delta)(\nabla w_h\cdot \delta)\|_{L^1(T)}
		\\
		&+
		\| (\nabla u \cdot \delta)(\nabla w_h\cdot \delta) - (\nabla u \cdot \delta)(\nabla w\cdot \delta)\|_{L^1(T)}
		+
		\|(\nabla u \cdot \delta)(\nabla w\cdot \delta) - C\|_{L^1(T)}
		\\
		&\lesssim
		\|\nabla (u - \tilde{u}_h) \cdot \delta\|_{0,T}\,\|\nabla w_h\cdot \delta\|_{0,T}
		+
		\|\nabla (w - w_h)\cdot \delta\|_{0,T}\, \|\nabla u \cdot \delta\|_{0,T}
		\\
		&+
		H_T\sspace\|\nabla u \cdot \delta\|_{1,T}\sspace \|\nabla w\cdot \delta\|_{1,T}
		\,.
	\end{align}
	After summing over all $T\in \mesh_H$ and $\delta\in\scD(T_m)$ and taking into account $|\delta| \eqsim h$, we arrive at the bound
																																													\begin{align}
		\|u_h-\tilde{u}_h\|_0^2
		&\lesssim
		h^{2-n} 
		\|\sfA-\tilde{\sfA}\|_{\max}
		\big(
		\|u - \tilde{u}_h\|_{1}\, | w_h|_{1}
		+
		\|w - w_h\|_{1}\, |u|_{1}
		+
		H\|\nabla u\|_{1} \|\nabla w\|_{1}
		\big)
		\\
		&\lesssim
		h^{2-n} 
		\|\sfA-\tilde{\sfA}\|_{\max}
		\big(
		h\,|u|_2 + H^{r+1}|K|_{W^{r+1,\infty}(\Omega)}|u|_1 + H\, \|\nabla u\|_{1}
		\big)
		\sspace
		\|w\|_{2}
		\\
		&\lesssim
		H^{r+1}|K|_{W^{r+1,\infty}(\Omega)}
		\big(
		H\, \|\nabla u\|_{1} + H^{r+1}|K|_{W^{r+1,\infty}(\Omega)}|u|_1
		\big)
		\sspace
		\|w\|_{2}
		\,.
	\end{align}
		The proof is completed by recalling that $\|w\|_2\lesssim\|u_h-\tilde{u}_h\|_{0}$.
																																			\end{proof}

\begin{remark}
	As stated previously, the error estimates in \Cref{thm:APrioriBoundH1,thm:APrioriBoundL2} not do track the constants appearing in the corresponding classical estimates.
		When $r>0$, none of the constants in either classical estimate depend on the higher order norms $|K|_{W^{r+1,\infty}(\Omega)}$.
		\end{remark}

\begin{remark}
The main ingredients in the error analysis presented in this section are \Cref{lem:InfinityNormConvergence}, Strang's First Lemma, and~\cref{eq:DifferenceIdentity}, which simply follows from the zero row sum property and symmetry.
Simple generalizations of \Cref{lem:InfinityNormConvergence}, for the bilinear forms $a_2(\cdot,\cdot)$ and $a_3(\cdot,\cdot)$ have been stated in~\cref{eq:OtherMaxNormBounds}.
Meanwhile,~\cref{eq:DifferenceIdentity} simply follows from the zero row sum property of $\tilde{\sfA}$ and symmetry.
Therefore, we see no reason to doubt that our analysis here can be generalized to the other problems of interest in this paper.
Hence, we proceed with numerical verification and demonstration.
\end{remark}

\section{Implementation} \label{sec:implementation}
The performance of a numerical method largely depends on its implementation.
Therefore, in this section, we highlight the important features of ours.
We use the HyTeG finite-element software framework \cite{Kohl2018HyTeGfinite} as the core framework for all the numerical experiments in \Cref{sec:numerical_experiments}.
It offers efficient distributed data structures for simplicial meshes in 2D and 3D, which serve as a basis for the implementation of massively parallel fast iterative solvers.
Its main concept is based on the idea that a coarse input mesh is split into its geometrical primitives, i.e., vertices, edges, and faces, and each of these primitives is uniformly refined. Because the primitives of the same dimension are decoupled from the others, all primitives of the same dimension may be processed in parallel.
This partitioning and the hierarchy of locally structures meshes allows for efficient parallel implementations of geometric multigrid methods.
More importantly, these data structures fit perfectly to the concept of macro-elements introduced in \Cref{sub:stencil_functions_on_uniformly_structured_meshes}.
The problems in this paper are mainly solved by employing a geometric multigrid solver using V-cycles with a hybrid Gauss--Seidel smoother. On the coarsest grid, either a preconditioned conjugate gradient method or the direct solver  MUMPS \cite{MUMPS01,MUMPS02}, as provided by the PETSc interface \cite{petsc-user-ref,petsc-efficient}, is used. For improved parallel scalability of the coarse grid solver, agglomeration techniques as provided by \texttt{PCTELESCOPE} \cite{MaySananRuppKnepleySmith2016} are used in runs with many processes.

\subsection{Polynomial least squares regression}
\label{sub:PLSR}
An important factor in the performance of the surrogate approach is the approximation of the stencil functions $\Phi^\delta_T$ by polynomials $\tilde{\Phi}^\delta_T$.
This step in the solver process must be very fast and is usually done in a pre-process step before the actual solve.
After various preparatory experiments, we have seen satisfactory performance and accuracy from simply computing $\tilde{\Phi}^\delta_T = \Pi^\delta_T \Phi^\delta_T$ via solving a simple least-squares problem, which we now describe.

Let $T\in\mesh_H$ be a macro-element and recall that $T_m$ is the associated lattice on level $m$. Suppose that $\Phi_T^\delta$ is the stencil function in direction $\delta \in \scD(T_m)$ which we want to approximate.
For the least-squares regression, we fix a level $m_{\mathrm{LS}}$ with $m \geq m_{\mathrm{LS}} \geq 2$ and define the set of least-squares points ${T}{}^\delta_{\mathrm{LS}} \coloneqq {T}_{m_{\mathrm{LS}}}\cap \overline{T}_\delta$.
Furthermore, let $\{p_k\}_{k=1}^{M}$ be a basis of $\mathcal{P}_q(T)$, the space of polynomials with maximal degree $q$.
Assume that $m_{\mathrm{LS}}$ is chosen large enough such that $\big|{T}{}^\delta_{\mathrm{LS}}\big| \geq M$ and introduce the following norm on $\mathcal{P}_q(T)$: $\|p\|^2_{{T}{}^\delta_{\mathrm{LS}}} \coloneqq \sum_{x_i \in {T}{}^\delta_{\mathrm{LS}}} p(x_i)^2$.
The least-squares regression problem, which in turn defines $\Pi^\delta_T$, is formalized as follows:
\begin{align}
\label{eqn:leastsquaresproblem}
\text{Find } \sfc \in \mathbb{R}^M \text{ satisfying }
\quad
\sfc = \argmin_{\sfd \in \mathbb{R}^M} \left\|\Phi^\delta_{T} - \sum_{k=1}^{M} d_k p_k\right\|^2_{{T}{}^\delta_{\mathrm{LS}}}
\,.
\end{align}
The approximated stencil function is then defined as $\tilde{\Phi}^\delta_{T} \coloneqq \sum_{k=1}^{M} c_k p_k$.
This problem is equivalent to solving the possibly overdetermined linear system of
equations $\sfB \sfc = \sff$ in a least-squares sense,
where $\sfB_{ij} = p_j(x_i)$ and $\sff_i = \Phi^\delta_{T}(x_i)$ for $1 \leq i \leq |{T}{}^\delta_{\mathrm{LS}}|$ and $1 \leq j \leq N$. The choice of the polynomial basis is arbitrary. However, for an easier implementation, we employ the monomial basis, even knowing that the resulting linear system is ill conditioned. Since it is crucial to get numerically precise results, a stable solver for this problem has to be chosen. For this purpose, we apply the
\texttt{colPivHouseholderQr} method from the Eigen 3.3.5 library \cite{eigenweb}, which offers a good balance between speed and accuracy. Obviously, each of these linear systems is independent of others, therefore they may be solved in parallel.
\begin{remark}
\label{rem:OptimalityCondition}
Taking into account $\tilde{\Phi}^\delta_{T} = \sum_{k=1}^{M} c_k p_k$, the first order optimality condition for~\cref{eqn:leastsquaresproblem} can be stated as $\sum_{x_i\in{T}{}^\delta_{\mathrm{LS}}} \big[\Phi^\delta_T-\Pi^\delta_T\Phi^\delta_T\big](x_i) = 0$.
If $m=m_{\mathrm{LS}}$, then the secondary assumption in \Cref{thm:APrioriBoundL2} is satisfied and we see higher order convergence in $H$, as stated in~\cref{eq:APrioriBoundL2_2}.
Usually, when $m_{\mathrm{LS}}$ is close but not equal to $m$, we see preasymptotic $H$-convergence in between the two estimates given in~\cref{eq:APrioriBoundL2}; see \Cref{fig:varying_hls}.
\end{remark}
\begin{remark}
\label{remark:symmetricregression}
In the case where the bilinear form $a(\cdot,\cdot)$ is symmetric, we need only approximate a single stencil function $\Phi^\delta_T$ for both directions $\delta$ and $-\delta$.
Indeed, as observed in \Cref{rem:Symmetry}, the corresponding stencil functions are identical, up to a shift by $\delta$.
Furthermore, the symmetry requirement \cref{eq:SymmetricInterpolants}, from \Cref{rem:SurrogateSymmetry}, is satisfied with the projection operator defined above.
Indeed, one may verify that for every $\delta$, $T_\delta = T_{-\delta} - \delta$.
Therefore,
\begin{equation}
	\Big\|\Phi^\delta_{T} - \tilde{\Phi}^\delta_{T}\Big\|^2_{{T}{}^\delta_{\mathrm{LS}}}
	=
	\Big\|\Phi^{-\delta}_{T} - \tilde{\Phi}^{-\delta}_{T}\Big\|^2_{{T}{}^{-\delta}_{\mathrm{LS}}}
	\qquad
	\text{and}
	\qquad
	\tilde{\Phi}^\delta_{T}(x) = \tilde{\Phi}^{-\delta}_{T}(x+\delta)
	\,.
\end{equation}
Thus, on simplicial meshes in 2D, only four instead of seven polynomials per macro-element have to be determined and stored in memory. In some cases, where the zero row sum property holds, the number of required polynomials may be even reduced to three.
\end{remark}
\subsection{Fast polynomial evaluation}
An even more important factor with respect to the performance of our implementation is the fast evaluation of the surrogate stencil functions $\tilde{\Phi}^\delta_{T}$. Contrary to the computation of each $\tilde{\Phi}^\delta_{T}$, which will happen only once per solve, these evaluations will be made during every matrix-vector multiplication. Therefore, the costs of evaluating the stiffness matrix entries associated to a degree of freedom, may not exceed the costs of evaluating the bilinear forms with the respective ansatz functions. In this case not only the reduction of floating point operations per degree of freedom is of importance, but also the required memory traffic has to be taken into account.

When performing a matrix-vector multiplication in HyTeG, the degrees of freedom in a macro-element are processed in a row-wise fashion as illustrated in the left of \cref{fig:poly_eval_1d}. In each row, the stencil function may be interpreted as a 1D function. We assume without loss of generality, that the 1D stencil functions are aligned with the $x$-axis. This property is also inherited by the approximated stencil function.
\begin{figure}\centering
\includegraphics[width=0.33\linewidth]{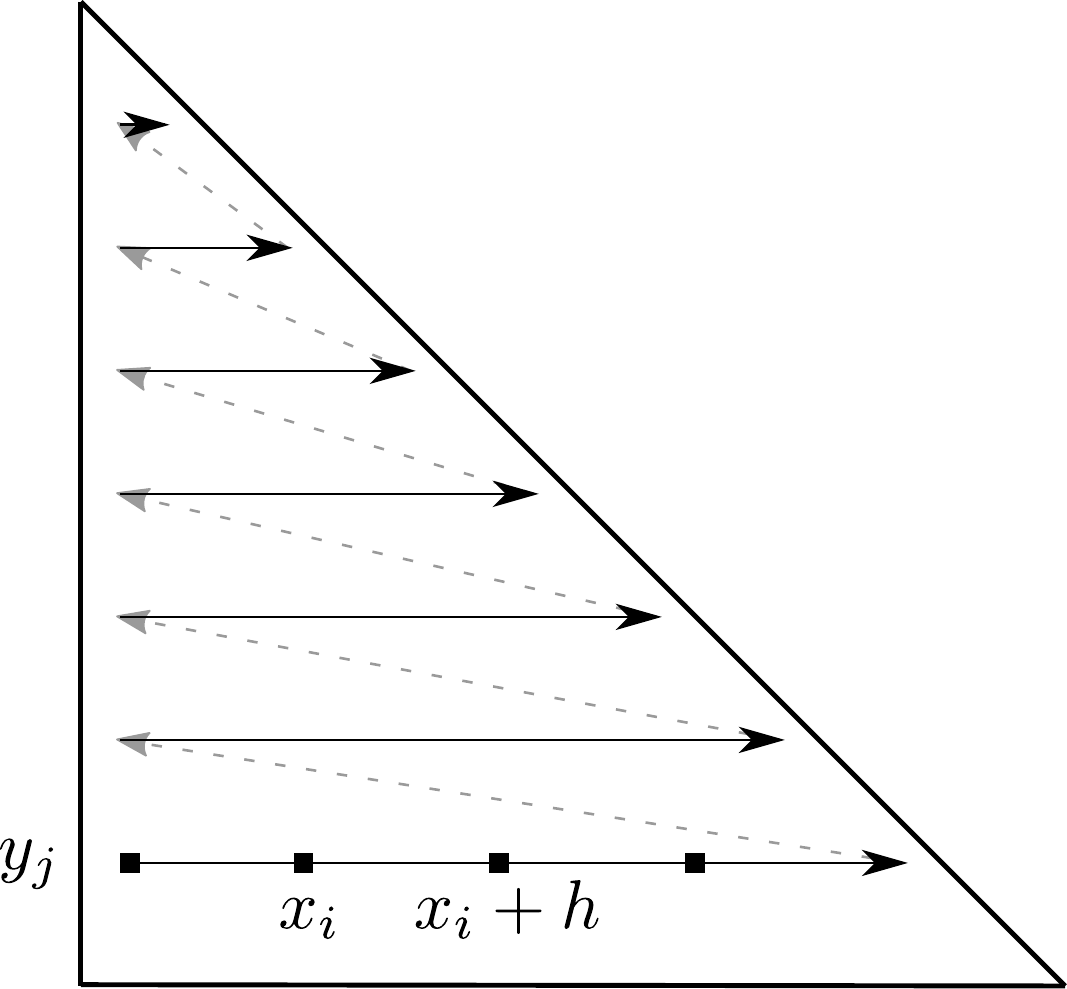}\hspace*{2em}
\includegraphics[width=0.375\linewidth]{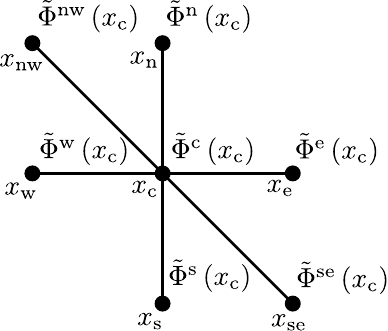}
\caption{\label{fig:poly_eval_1d} Illustration of a loop through the degrees of freedom in a macro-element. In each row of the loop, the 2D stencil function may be interpreted as a 1D function (left). Seven stencil functions have to be evaluated in order to obtain the whole stencil for a degree of freedom (right).}
\end{figure}
To further optimize the evaluation of the 1D polynomial, we exploit that the stencil functions have to be evaluated on a line subdivided into uniformly sized intervals of length $h$. Let $(x_i, y_j)$ be a vertex node in the lattice and let $p_{y_j}(\cdot) \coloneqq \tilde{\Phi}_\delta(\cdot, y_j)$ be the approximated 1D stencil function associated to row $y_j$.

Assuming that we already have evaluated the stencil function $p_{y_j}$ at a point $x_i$, we want to evaluate it at the next point $x_i + h$ as efficiently as possible. Since the grid points are equidistantly distributed, we can use a special case of the divided differences, called forward differences \cite[pg. 126]{RichardL.Burden2015Numerical}.

First, we need $q+1$ helper variables $\left\{\Delta_{x_0}^{(k)}\right\}$ for $k \in \{0,1,\ldots,q\}$ which are defined in a preprocessing step as follows:
\begin{align}
\Delta^{(0)}_{x_0} &\coloneqq p_{y_j}(x_0)\,,\\
\Delta^{(k)}_{x_0} &\coloneqq \Delta_{x_0+h}^{(k-1)} - \Delta_{x_0}^{(k-1)}, \quad k \in \{1,\ldots,q\}\,.
\end{align}

The value at position $p_{y_j}(x_0)$ is then given by $\Delta_{x_0}^{(0)}$. In order to obtain the value at $p_{y_j}(x_0+h)$, one has to update all the helper variables in the following way:
\begin{align}
\Delta_{x_0}^{(k)} &\coloneqq \Delta_{x_0}^{(k)} + \Delta_{x_0}^{(k+1)}, \quad k \in \{0,1,\ldots,q\}\,.
\end{align}
After that, the value of $p_{y_j}(x_0+h)$ is given by $\Delta_{x_0}^{(0)}$. Doing this recursively yields the approximated stencil function values at all mesh points on a single row using only $q+1$ helper variables and $q+1$ floating point additions.
When iterating through a row, in general seven polynomial evaluations, one for each direction, are required, cf.~right of \cref{fig:poly_eval_1d}. In the symmetric case this may be reduced to six polynomial evaluations, since the western stencil weight may be obtained from the previous eastern evaluation.
Keep also in mind that in the symmetric case the polynomials of approximated stencil functions in opposite directions are the same but only evaluated at different positions, cf. \cref{remark:symmetricregression}.
Therefore, $6\cdot(q+1)$ helper variables are required for a single row. For $q=8$, these results in $54\cdot8$ bytes of memory which fits easily into a modern L1 CPU cache. When moving from one lattice point to another, $6\cdot(q+1)$ vectorizable floating point additions have to be performed, to obtain the updated polynomial evaluations.
Furthermore, in our implementation, the polynomial degree is realized as a C++ template parameter, therefore, all loops concerning the evaluation of a polynomial of a certain degree may be optimized at compile time.
Since our focus lies in the theoretical analysis of the surrogate approach, thorough performance studies employing performance models should be considered beyond the scope of this paper.
Similar performance studies have been carefully completed in \cite{bauer2018new,bauer2017two}.
Thus, in the next section, we only report on relative run-times of the surrogate approach compared to the standard method, also implemented on HyTeG, using on-the-fly quadrature of the integrals stemming from the bilinear forms.

\section{Numerical experiments} \label{sec:numerical_experiments}
In this section, we numerically verify \Cref{thm:APrioriBoundH1,thm:APrioriBoundL2}, both related to the variable coefficient Poisson equation.
Additionally, we present proof-of-concept results for a linearized elasticity application and a simple $p$-Laplacian diffusion problem.
While not covered by the theory, we include these latter examples to demonstrate the breadth of generality of the methodology.

All run-time measurements in this sections were obtained on a machine equipped with two Intel\textsuperscript{\textregistered} Xeon\textsuperscript{\textregistered} Gold 6136 processors with a nominal base frequency of 3.0 GHz.
Each processor has 12 physical cores which results in a total of 24 physical cores.
The total available memory of \SI{251}{\giga\byte} is split into two NUMA domains, one for each socket.
We use the GCC 7.3.0 compiler and specify the following compiler arguments: \texttt{-O3 -march=native}.
All the examples in this section were executed in parallel using all available 24 physical cores.

When comparing run times from the standard and the surrogate approaches, many factors are responsible for the relative speed-up of the surrogate approach.
Increasing the polynomial order $q$ of the surrogate stencil functions not only increases the run time of a multigrid iteration but also the time spent in the setup phase (i.e., computing each $\Phi_T^\delta$).
The cost of the setup phase, however, is mostly dominated by the sampling level $m_{\mathrm{LS}}$.
Therefore, when the ratio of time spent in the iterative solver to the time spent in the setup phase(s) for solving a particular problem is large, the setup cost is almost negligible and we see the best performance.
Since the problems in the following subsections differ in complexity and have different ratios of solver to setup time, the observed relative speed-ups are not directly comparable.
Nonetheless, the reported speed-ups for all tested examples range between a factor of 14 and 20.
Such significant speed-ups are in particular important in case of dynamic or stochastic applications.
Most stochastic applications demand an enormous number of deterministic solves resulting quite often in extreme long run times.
Having such a surrogate approach at hand can help to make stochastic approaches such as, e.g., multilevel Monte Carlo and its variants, more accessible for complex applications.

\subsection{Quantitative benchmark problem}
\label{sub:quantitativebenchmark}
In this subsection, we examine the surrogate method for the variable coefficient Poisson equation which has been described and analyzed above.
The strong form of the problem is
\begin{equation}
\begin{alignedat}{2}
-\mathrm{div}\left(K \nabla u\right) &= f &&\quad \text{in } \Omega,\\
u &= g &&\quad \text{on } \partial \Omega.
\end{alignedat}
\label{eqn:strongform}
\end{equation}
We consider both the bilinear form coming from the scalar coefficient scenario (i.e., $K = k\cdot \mathrm{Id}$), introduced in~\cref{eq:BilinearFormPoisson}, and the tensorial coefficient scenario, introduced in~\cref{eq:BilinearFormPoissonTensor}.
In the scalar coefficient experiments, we use the unit-square domain $\Omega = (0,1)^2$.
In the tensorial coefficient experiments, the domain $\Omega$ has a curvilinear boundary.

\begin{figure}\centering
\begin{minipage}{0.25\linewidth}
	\centering
	\includegraphics[width=\linewidth]{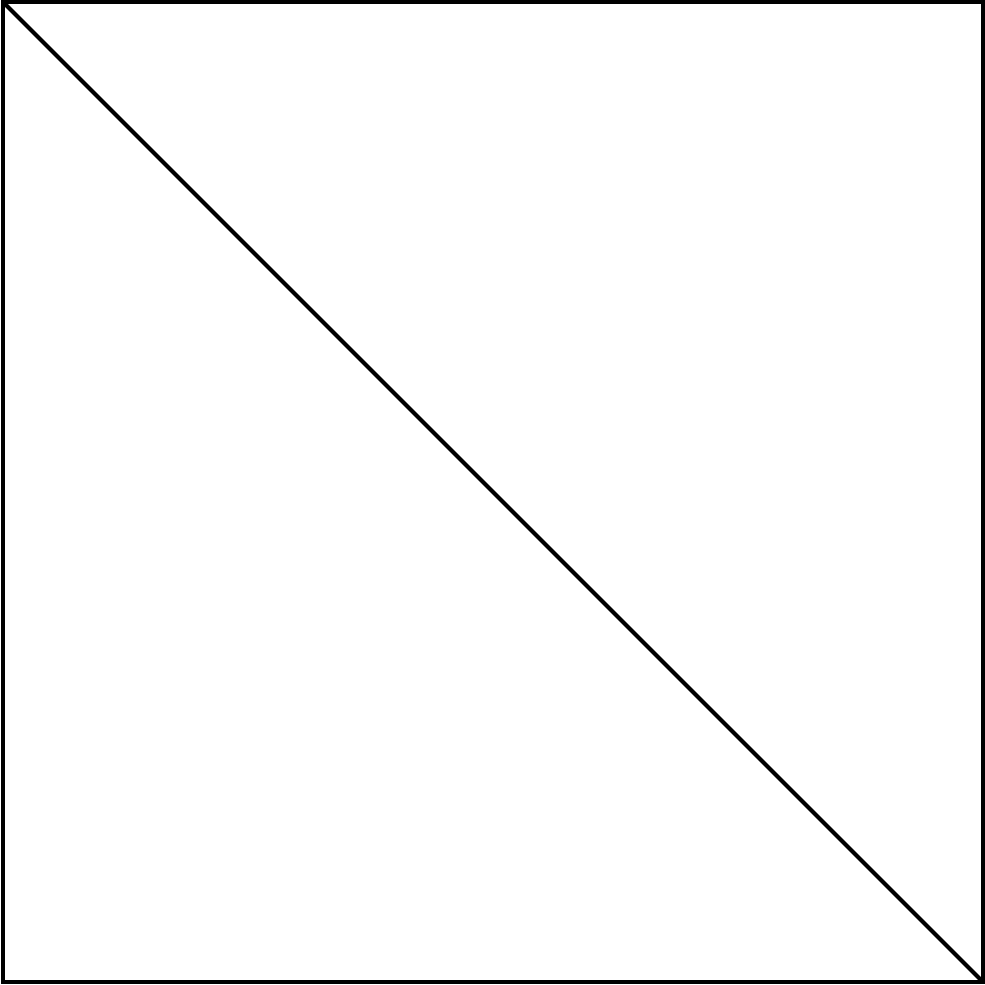}
\end{minipage}\hspace*{2em}
\begin{minipage}{0.25\linewidth}
	\centering
	\includegraphics[width=\linewidth]{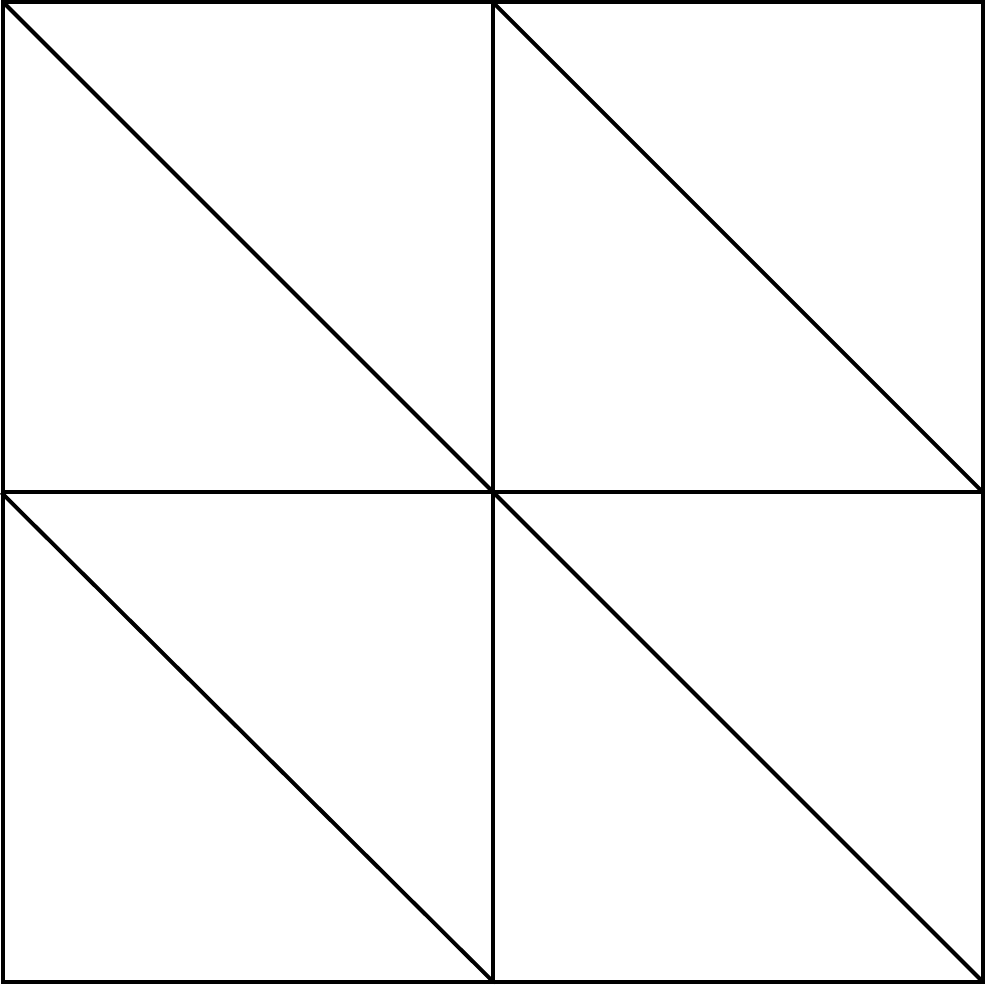}
\end{minipage}\hspace*{2em}
\begin{minipage}{0.25\linewidth}
	\centering	
	\includegraphics[width=\linewidth]{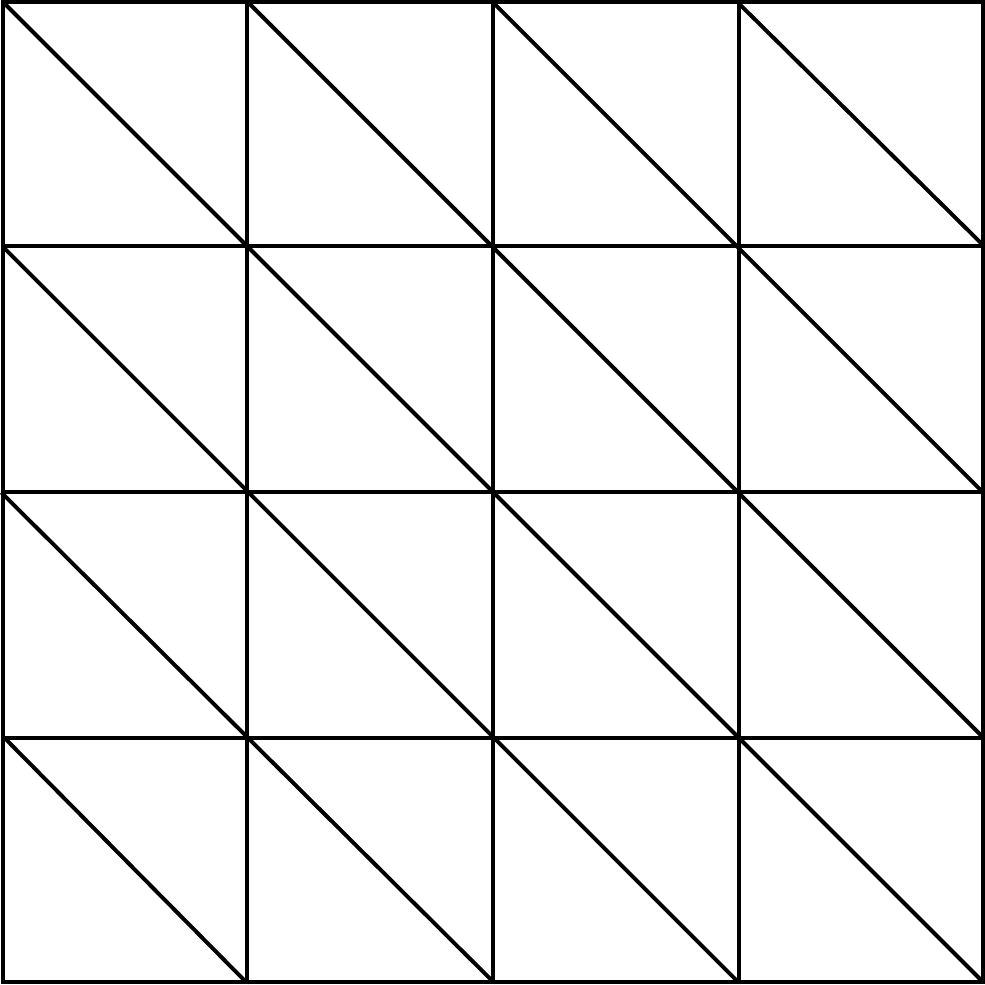}
\end{minipage}
\caption{\label{fig:unitsquare_meshes} Coarse macro-meshes of the unit-square with mesh sizes $H = H_0$ (left), $H = \nicefrac{H_{0}}{2}$ (middle), and $H = \nicefrac{H_{0}}{4}$ (right). Meshes with a smaller $H$ follow the same uniform refinement pattern.}
\end{figure}

\subsubsection{Scalar coefficient on unit square}
\label{sec:scalar_coefficient_benchmark}
In the first benchmark problem ($K = k\cdot \mathrm{Id}$), we take $\Omega = (0,1)^2$ and employ the scalar coefficient function
\begin{align}
k(x,y) = \exp{\left(xy\right)} + \sin{\left (3\pi x y \right )} + \cos{\left (\pi x^{2} y \right )} + 1
\label{eq:ScalarCoefficient}
\end{align}
in problem \cref{eqn:strongform}.
The manufactured solution $u$ is chosen as $u(x,y) = \sin(x)\sinh(y)$. The restriction of $u$ to the boundary is chosen as Dirichlet datum $g$. The right-hand-side $f$ is directly computed by inserting $u$ into the equation.
In this benchmark, we fix the finest mesh size $h$ and report on the errors depending on $H$ and $q$ to show the proven $\mathcal{O}\left(H^{q+1}\right)$ estimate in the $H^1$ norm and $\mathcal{O}\left(H^{q+2}\right)$ in the $L^2$ norm. For this purpose, $h$ is chosen to be very small in order for the error to be mostly dominated by the surrogate part.

The reference macro-mesh size is given by $H_0$, as illustrated in the left of~\cref{fig:unitsquare_meshes}. All finer macro-meshes, with associated mesh sizes $H<H_0$, stem from uniformly refining this reference mesh; see middle and right of~\cref{fig:unitsquare_meshes}. The fine mesh, with associated mesh size $h\ll H$, is the 13 times uniformly refined reference macro-mesh, i.e.,~$h = 2^{-13} H_0$. 
This fine mesh, has about \num{6.71e7} degrees of freedom.
The approximation of the stencil functions through least-squares regression, is done on the mesh associated to mesh size $H_{\mathrm{LS}} = 2^{-8} H$.
Note that this keeps the number of sampling points in each macro-element constant to \num{32639}.
Each linear system is solved by applying geometric multigrid V(2,2) iterations until a relative residual of \num{1e-13} is obtained.

\begin{table}
\centering
\caption{\label{tab:capitalhconvergence_scalar_h1}Relative $H^1$ errors and experimental orders of convergence for fixed $h$ and varying $q$ and $H$ in the case of problem~\cref{eqn:strongform} with the scalar coefficient~\cref{eq:ScalarCoefficient}. Here, the relative $H^1$ error with the classical FEM is \num{1.23e-08}.}
\resizebox{\linewidth}{!}{\begin{tabular}{r|c|c|c|c|c|c|c|c}	\toprule & \multicolumn{2}{c|}{$q = 1$} & \multicolumn{2}{c|}{$q = 2$} & \multicolumn{2}{c|}{$q = 3$} & \multicolumn{2}{c}{$q = 4$}\\
	\multicolumn{1}{c|}{$\frac{H}{H_0}$} & rel. $H^1$ err. & eoc & rel. $H^1$ err. & eoc & rel. $H^1$ err. & eoc & rel. $H^1$ err. & eoc \\\midrule
	\csvreader[late after line=\\,late after last line=\\\bottomrule,head to column names]{./results/benchmarks/results/scalar_h1.csv}{}
	{\capitalh & \qOne & \eocOne & \qTwo & \eocTwo & \qThree & \eocThree & \qFour & \eocFour}
\end{tabular}}
\end{table}

\begin{table}
\centering
\caption{\label{tab:capitalhconvergence_scalar_l2}Relative $L^2$ errors and experimental orders of convergence for fixed  $h$ and varying $q$ and $H$ in the case of problem~\cref{eqn:strongform} with the scalar coefficient~\cref{eq:ScalarCoefficient}. Here, the relative $L^2$ error with the classical FEM is \num{4.10e-09}.}
\resizebox{\linewidth}{!}{\begin{tabular}{r|c|c|c|c|c|c|c|c}	\toprule & \multicolumn{2}{c|}{$q = 1$} & \multicolumn{2}{c|}{$q = 2$} & \multicolumn{2}{c|}{$q = 3$} & \multicolumn{2}{c}{$q = 4$}\\
	\multicolumn{1}{c|}{$\frac{H}{H_0}$} & rel. $L^2$ err. & eoc & rel. $L^2$ err. & eoc & rel. $L^2$ err. & eoc & rel. $L^2$ err. & eoc \\\midrule
	\csvreader[late after line=\\,late after last line=\\\bottomrule,head to column names]{./results/benchmarks/results/scalar_l2.csv}{}
	{\capitalh & \qOne & \eocOne & \qTwo & \eocTwo & \qThree & \eocThree & \qFour & \eocFour}
\end{tabular}}
\end{table}

In~\cref{tab:capitalhconvergence_scalar_h1,tab:capitalhconvergence_scalar_l2}, the relative $H^1$ and $L^2$ errors for decreasing mesh sizes $H$ are shown. Both tables show the expected convergence rates. In the case of the $L^2$ norm for $q=3$ and $q=4$, the convergence rate deteriorates for small macro-mesh sizes $H$ because the discretization error is dominating the total error.

In order to show the dependence of the least-squares approach on the sampling level, we provide \Cref{fig:varying_hls}.
Here, we show two plots of the relative $L^2$ errors for fixed $q \in \{1,3\}$, $h=2^{-13}H_0$, and varying $H_\mathrm{LS}$.
From this figure, one can see that it is crucial to tune the sampling level fine enough in order to achieve optimal $\mathcal{O}\left(H^{q+2}\right)$ convergence in the $L^2$-norm.

\begin{remark}
The choice of sampling level $m_{\mathrm{LS}}$ or, equivalently, $H_{\mathrm{LS}}$ is very important, since the cost of the polynomial regression grows exponentially with $m_{\mathrm{LS}}$.
However, choosing a too large $H_{\mathrm{LS}}$ may violate the discrete $L^2$ projection property required in \cref{thm:APrioriBoundL2} in order to obtain an increased order of convergence.
Therefore, it is crucial to choose a suitable $H_{\mathrm{LS}}$ for an optimal ratio between the accuracy of the solution and the run time of the stencil function approximation.
In each of our experiments, setting $H_{\mathrm{LS}}$ two to four times larger than the fine mesh size $h$ yielded satisfactory results with respect to accuracy and run time.
\end{remark}

Furthermore, in \cref{fig:stencilplot_var_q_H} we want to illustrate the dependence of the polynomial degree $q$ and the macro-mesh size $H$ within the surrogate approach. For this purpose, we plot the central true and surrogate stencil functions over the subdomain $\{(x,y)^\top \in \Omega : x + y \geq 1\}$ for different pairings of $q$ and $H$. It can be observed, that there is no visible difference of both functions when either the pairing $H = H_0$ and $q = 8$, or the pairing $H = \nicefrac{H_0}{8}$ and $q = 2$ is chosen. Obviously, the quality of $\tilde{\sfA}$ can be improved by either increasing $q$ or decreasing $H$. For smooth coefficients $K$, increasing $q$ is the more efficient option, like in the $hp$-FEM context.

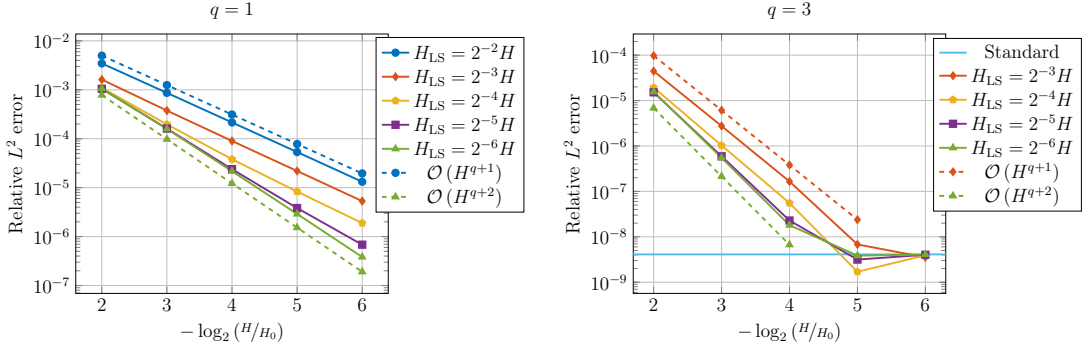
\begin{figure}\centering
\tikzset{font=\large}
\begin{minipage}{0.48\textwidth}
\begin{scaletikzpicturetowidth}{\textwidth}
\begin{tikzpicture}[scale=\tikzscale]
\begin{semilogyaxis}[
xlabel={$-\log_2{\left(\nicefrac{H}{H_0}\right)}$},
ylabel={Relative $L^2$ error},
xmajorgrids,
ymajorgrids,
title={$q=1$},
legend style={at={(0.96,0.65)},anchor=west},
xtick = {2,...,6}
]
\addplot[color1, mark=*, very thick] table [x index = {0}, y index={1}, col sep=comma] {./results/benchmarks/l2errors.csv};
\addlegendentry{$H_\mathrm{LS} = 2^{-2} H$};
\addplot[color2, mark=diamond*, very thick] table [x index = {0}, y index={2}, col sep=comma] {./results/benchmarks/l2errors.csv};
\addlegendentry{$H_\mathrm{LS} = 2^{-3} H$};
\addplot[color3, mark=pentagon*, very thick] table [x index = {0}, y index={3}, col sep=comma] {./results/benchmarks/l2errors.csv};
\addlegendentry{$H_\mathrm{LS} = 2^{-4} H$};
\addplot[color4, mark=square*, very thick] table [x index = {0}, y index={4}, col sep=comma] {./results/benchmarks/l2errors.csv};
\addlegendentry{$H_\mathrm{LS} = 2^{-5} H$};
\addplot[color5, mark=triangle*, very thick] table [x index = {0}, y index={5}, col sep=comma] {./results/benchmarks/l2errors.csv};
\addlegendentry{$H_\mathrm{LS} = 2^{-6} H$};
\addplot[color1, mark=*, very thick, dashed,mark options={solid}] table [x index = {0}, y index={8}, col sep=comma] {./results/benchmarks/l2errors.csv};
\addlegendentry{$\mathcal{O}\left(H^{q+1}\right)$};
\addplot[color5, mark=triangle*, very thick, dashed,mark options={solid}] table [x index = {0}, y index={9}, col sep=comma] {./results/benchmarks/l2errors.csv};
\addlegendentry{$\mathcal{O}\left(H^{q+2}\right)$};
\end{semilogyaxis}
\end{tikzpicture}
\end{scaletikzpicturetowidth}
\end{minipage}\hfill
\begin{minipage}{0.48\textwidth}
\begin{scaletikzpicturetowidth}{\textwidth}
\begin{tikzpicture}[scale=\tikzscale]
\begin{semilogyaxis}[
xlabel={$-\log_2{\left(\nicefrac{H}{H_0}\right)}$},
ylabel={Relative $L^2$ error},
xmajorgrids,
ymajorgrids,
title={$q=3$},
legend style={at={(0.96,0.65)},anchor=west},
xtick = {2,...,6},
xmin=1.7,xmax=6.3
]
\addplot[color6, mark=none, very thick, samples=2, restrict x to domain=1.7:6.3] plot coordinates {
	(1.7, 4.1e-9)
	(6.3, 4.1e-9)};
\addlegendentry{Standard};
\addplot[color2, mark=diamond*, very thick] table [x index = {0}, y index={2}, col sep=comma,restrict x to domain=1:6] {./results/benchmarks/l2errors_q3.csv};
\addlegendentry{$H_\mathrm{LS} = 2^{-3} H$};
\addplot[color3, mark=pentagon*, very thick] table [x index = {0}, y index={3}, col sep=comma,restrict x to domain=1:6] {./results/benchmarks/l2errors_q3.csv};
\addlegendentry{$H_\mathrm{LS} = 2^{-4} H$};
\addplot[color4, mark=square*, very thick] table [x index = {0}, y index={4}, col sep=comma,restrict x to domain=1:6] {./results/benchmarks/l2errors_q3.csv};
\addlegendentry{$H_\mathrm{LS} = 2^{-5} H$};
\addplot[color5, mark=triangle*, very thick] table [x index = {0}, y index={5}, col sep=comma,restrict x to domain=1:6] {./results/benchmarks/l2errors_q3.csv};
\addlegendentry{$H_\mathrm{LS} = 2^{-6} H$};
\addplot[color2, mark=diamond*, very thick, dashed,mark options={solid},restrict x to domain=1:5] table [x index = {0}, y index={8}, col sep=comma] {./results/benchmarks/l2errors_q3.csv};
\addlegendentry{$\mathcal{O}\left(H^{q+1}\right)$};
\addplot[color5, mark=triangle*, very thick, dashed,mark options={solid},restrict x to domain=1:4] table [x index = {0}, y index={9}, col sep=comma] {./results/benchmarks/l2errors_q3.csv};
\addlegendentry{$\mathcal{O}\left(H^{q+2}\right)$};
\end{semilogyaxis}
\end{tikzpicture}
\end{scaletikzpicturetowidth}
\end{minipage}
\caption{\label{fig:varying_hls}Relative $L^2$ errors for fixed $h=2^{-13}H_0$, varying $H_\mathrm{LS}$, $q=1$ (left), and $q=3$ (right) in the case of the variable coefficient Poisson equation on the unit-square with a scalar coefficient. For $q=3$ the relative $L^2$ error obtained from the standard approach is included, since the discretization error is dominating the surrogate error on the meshes with $H \leq 2^{-5}H_0$.
}
\end{figure}

\begin{figure}
\includegraphics[width=0.25\textwidth]{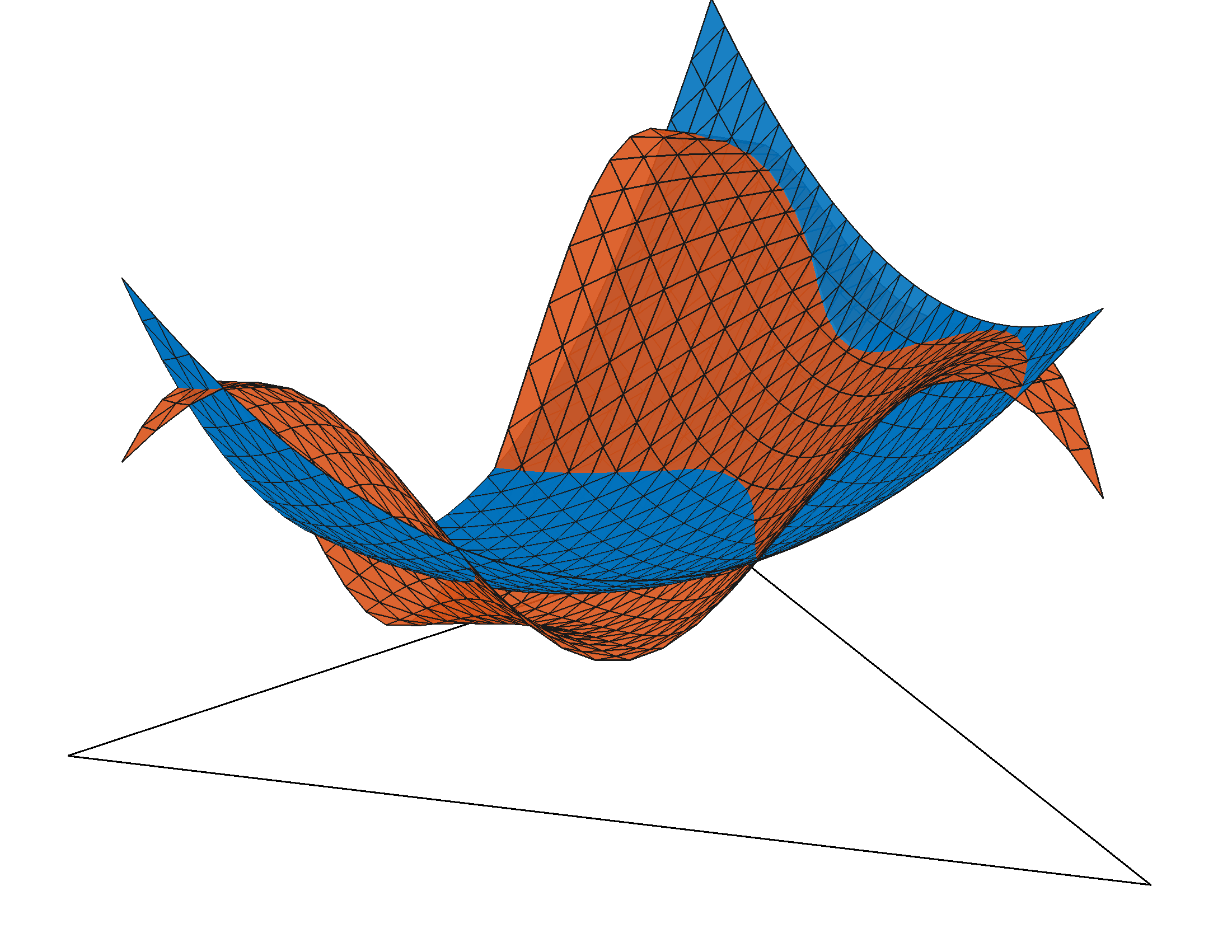}\includegraphics[width=0.25\textwidth]{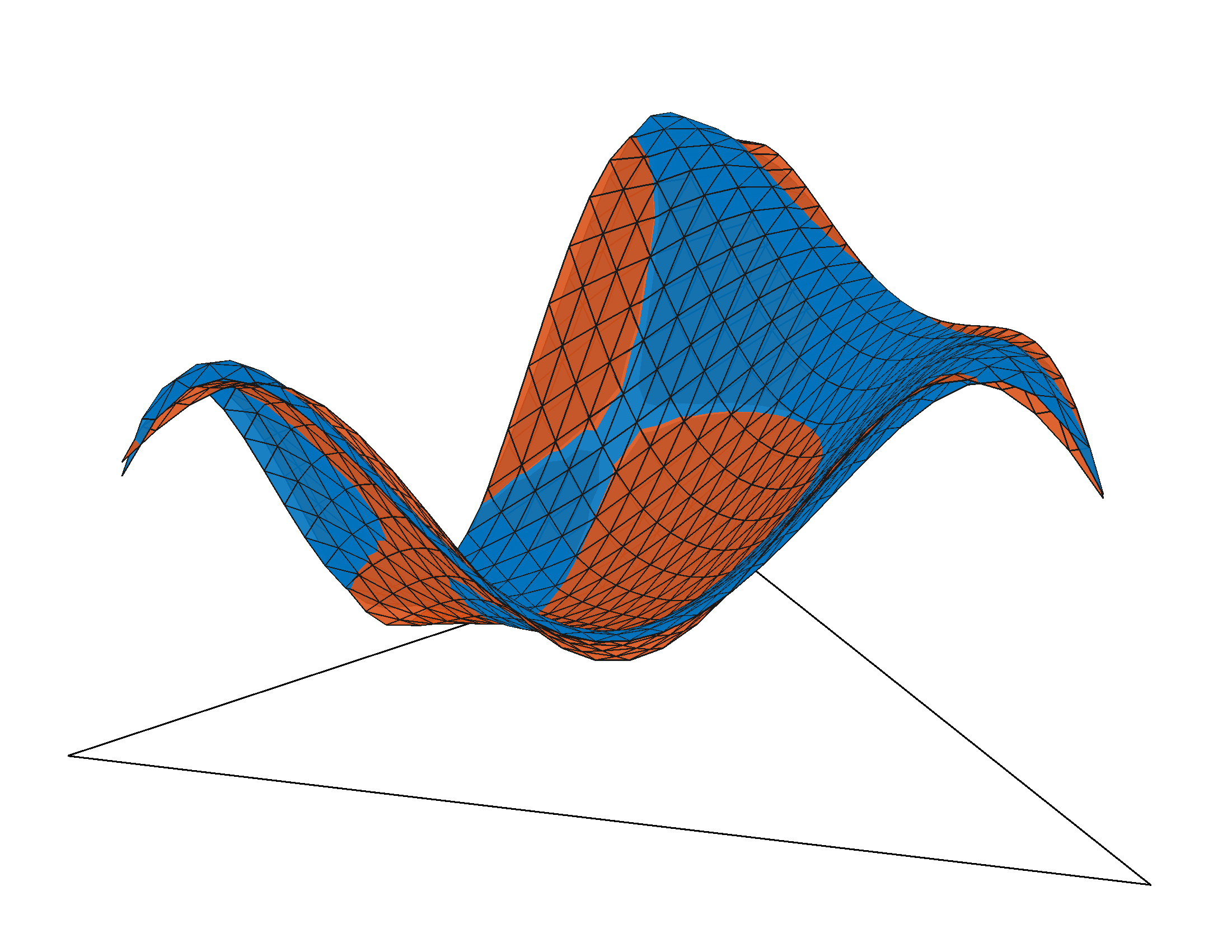}\includegraphics[width=0.25\textwidth]{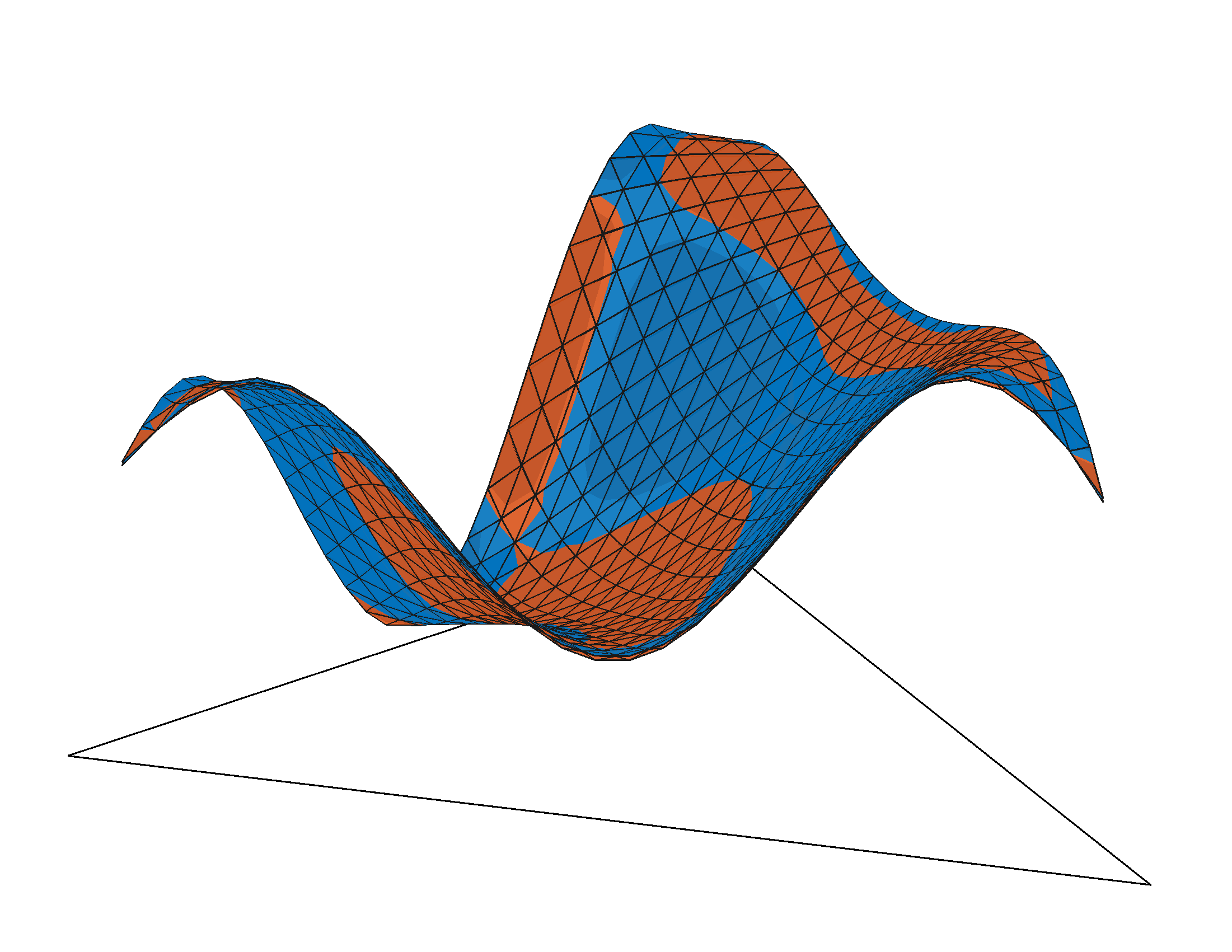}\includegraphics[width=0.25\textwidth]{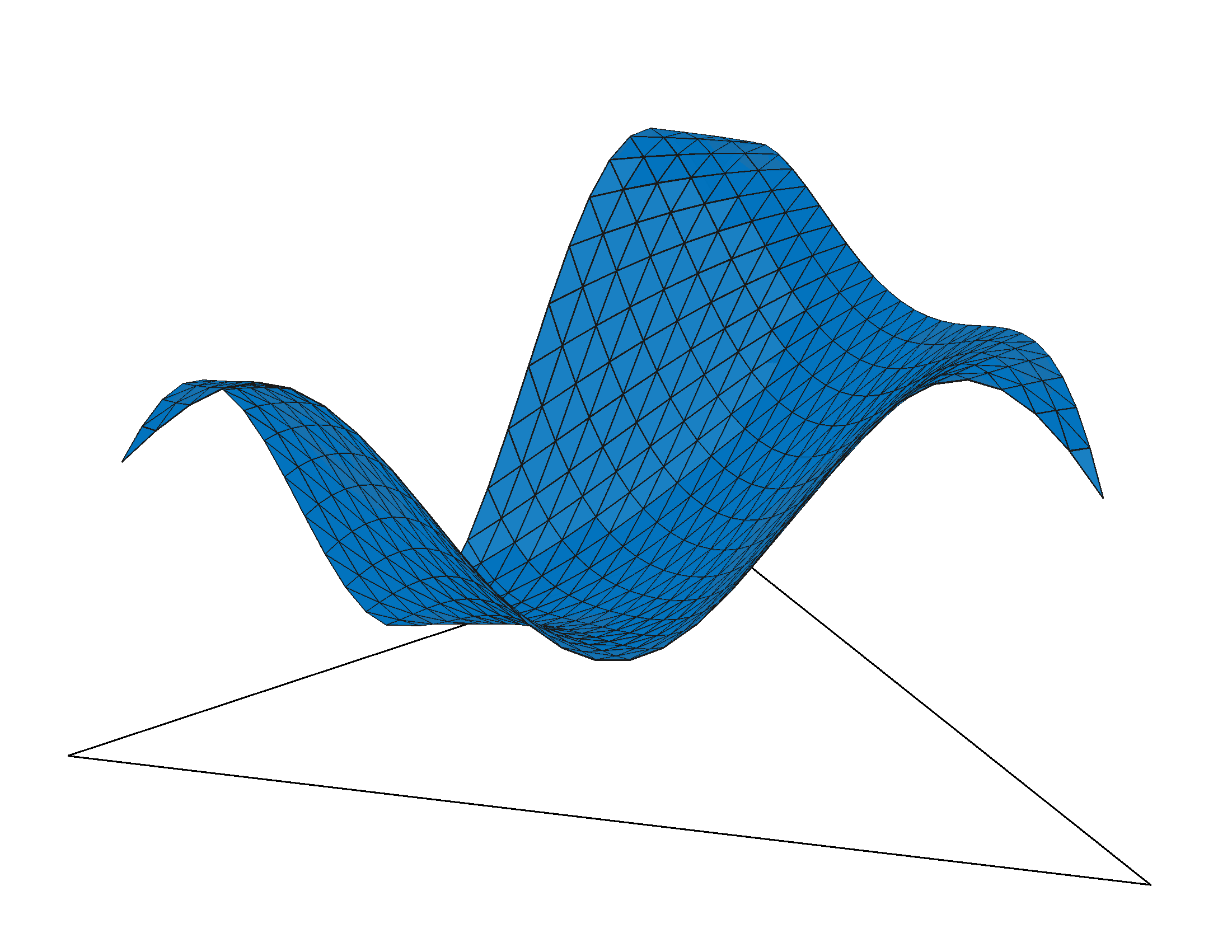}\\
\includegraphics[width=0.25\textwidth]{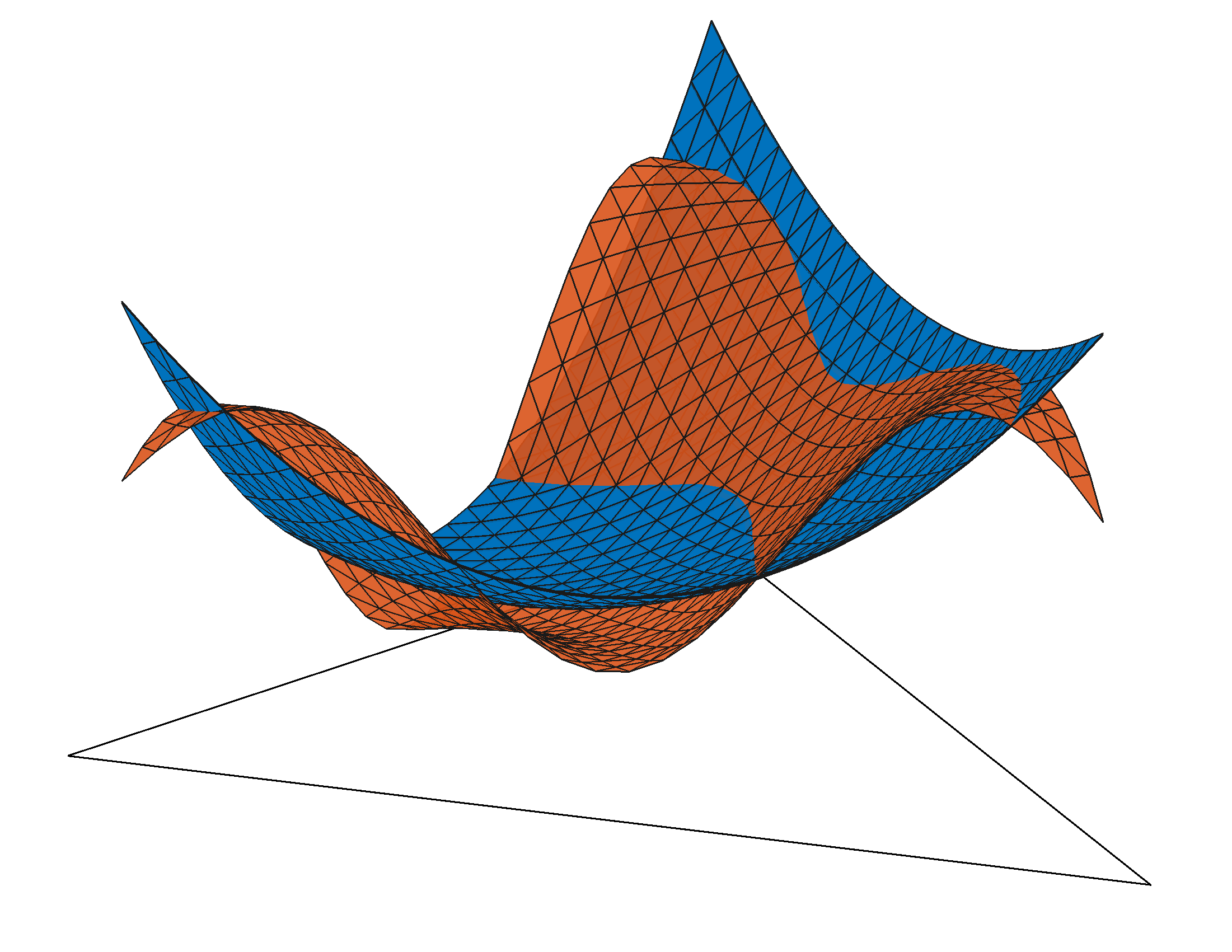}\includegraphics[width=0.25\textwidth]{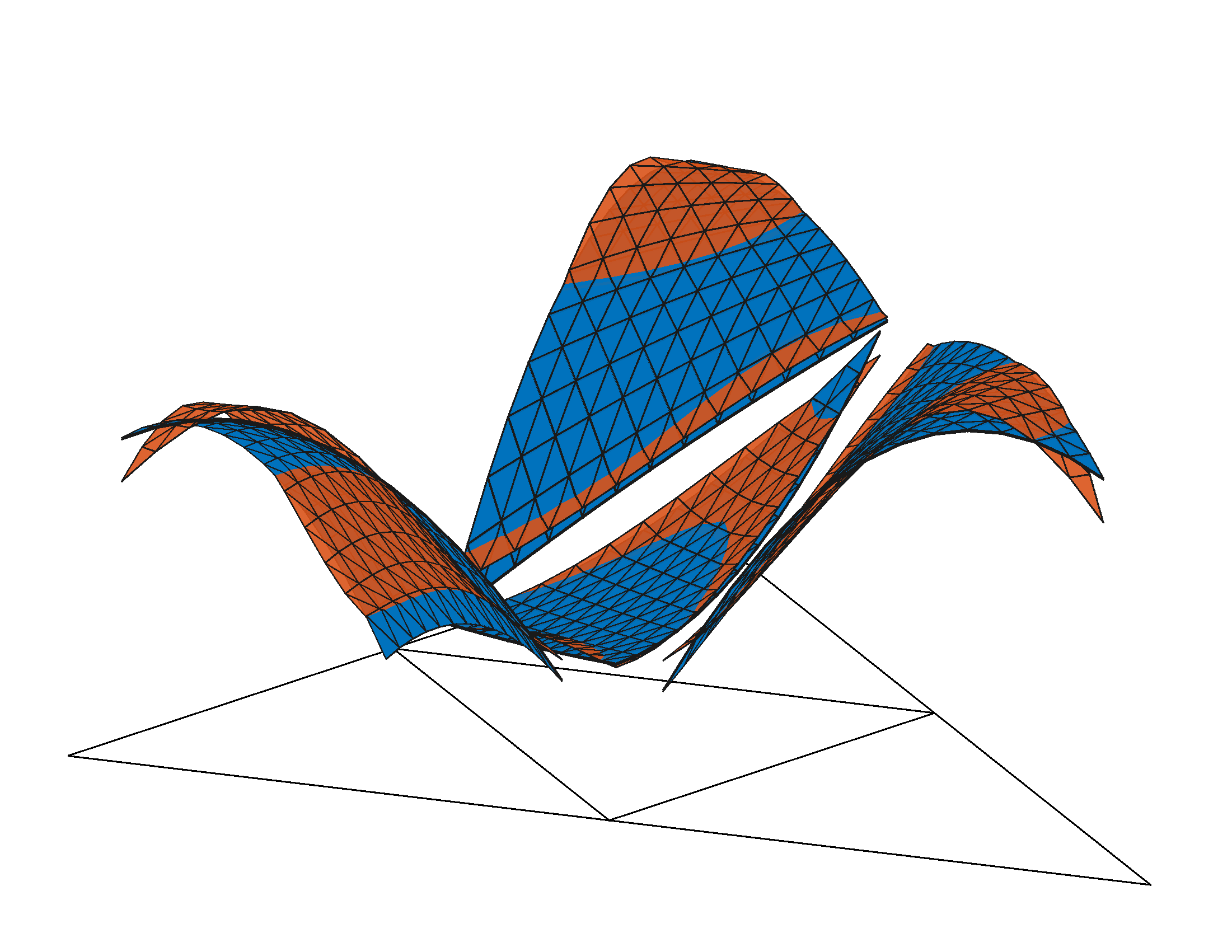}\includegraphics[width=0.25\textwidth]{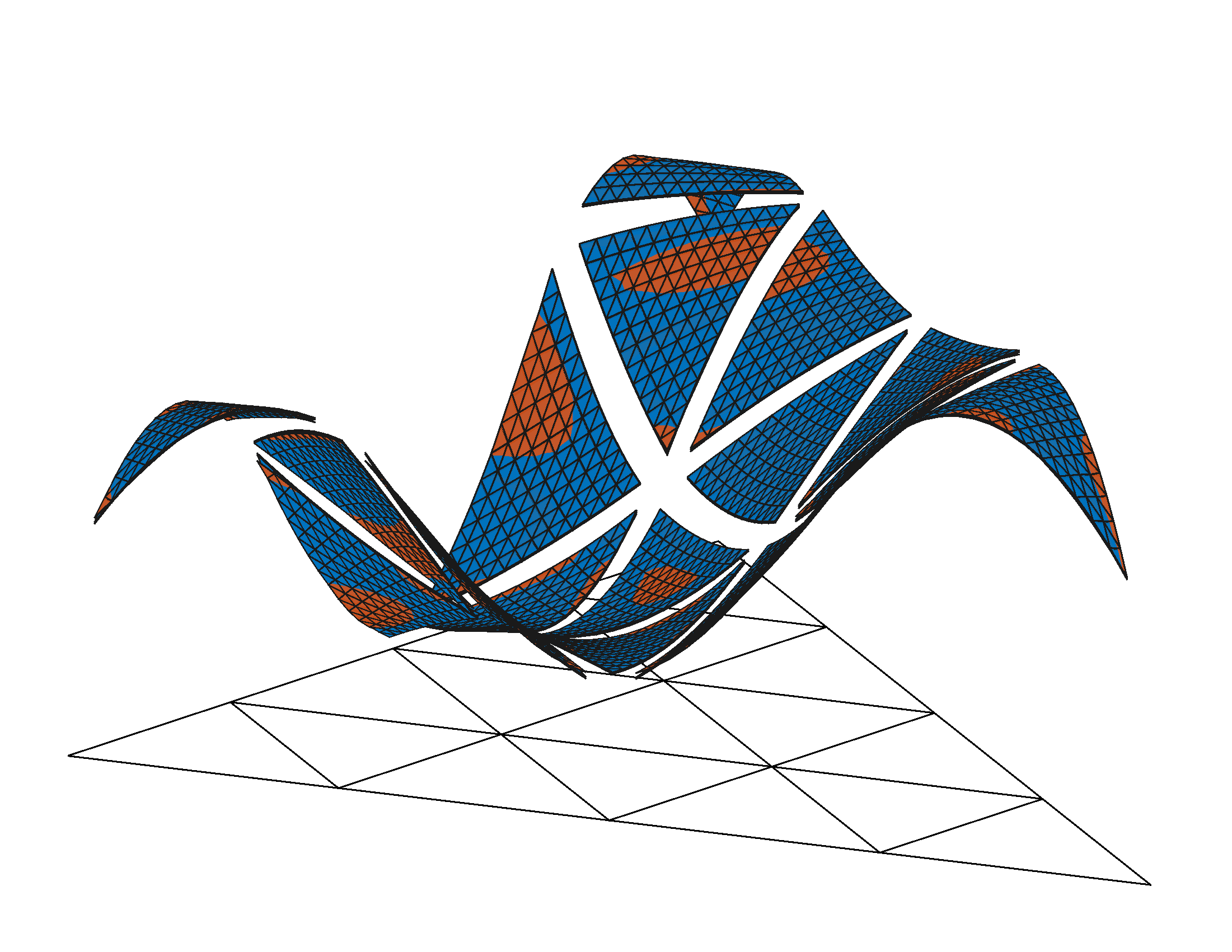}\includegraphics[width=0.25\textwidth]{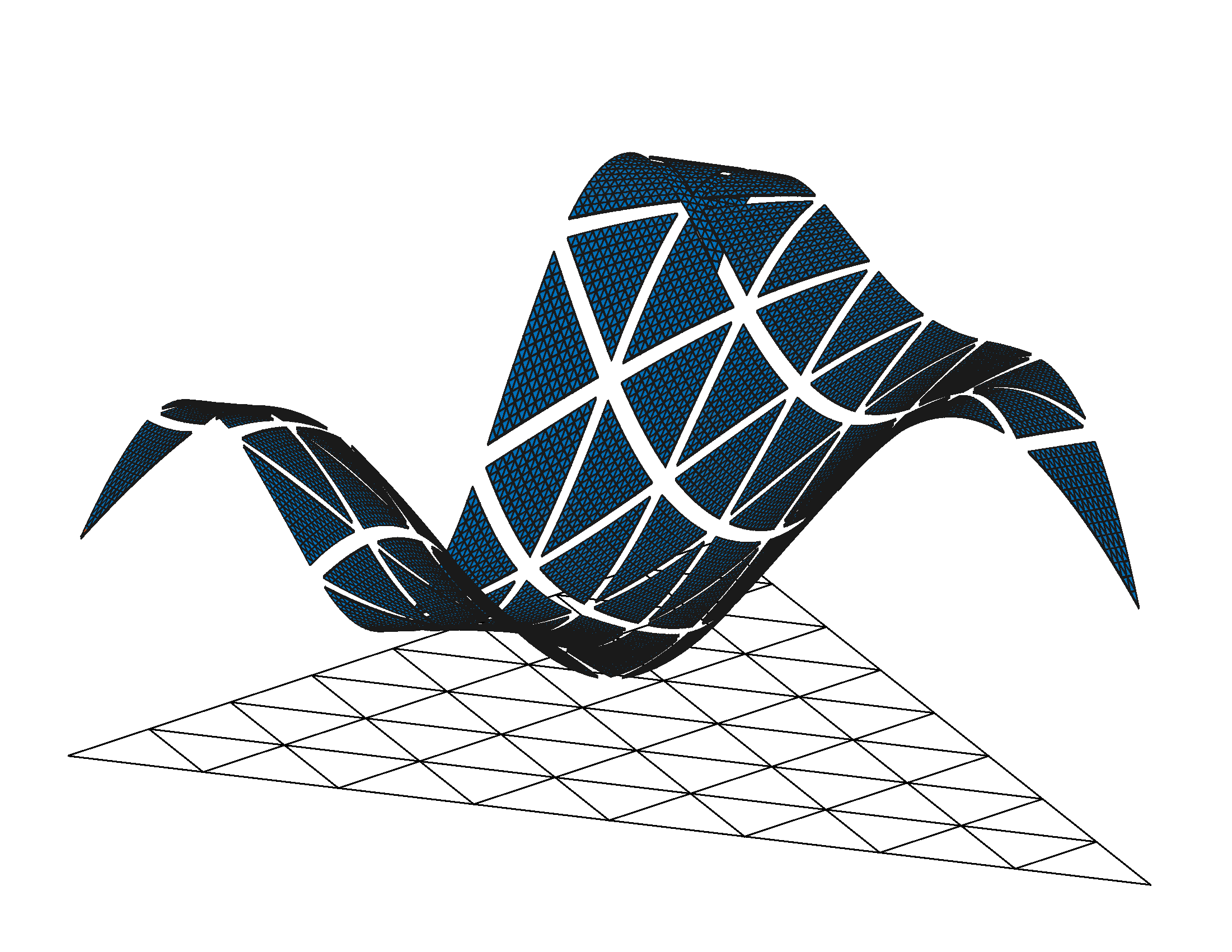}
\caption{\label{fig:stencilplot_var_q_H}Plots of true stencil functions in orange and surrogate stencil functions in blue for $\delta = 0$ over the subdomain $\{(x,y)^\top \in \Omega : x + y \geq 1\}$ in the case of the variable coefficient Poisson equation. Top row: Fixed $H = H_0$ and varying $q = 2$, $4$, $6$, and $8$ from left to right. Bottom row: Fixed $q=2$ and varying $H = H_0$, $\nicefrac{H_0}{2}$, $\nicefrac{H_0}{4}$, and $\nicefrac{H_0}{8}$ from left to right.}
\end{figure}

\subsubsection{Tensor coefficient on domain with curved boundaries}
\label{sec:tensor_coefficient_benchmark}
In the second benchmark problem, we study problem \cref{eqn:strongform} with the symmetric and positive definite tensor coefficient
\begin{align}
\label{eqn:benchmark_tensor_coefficient}
K(x,y) = \left[\begin{matrix}3 x^{2} + 2 y^{2} + 1 & - x^{2} - y^{2}\\- x^{2} - y^{2} & 4 x^{2} + 5 y^{2} + 1\end{matrix}\right]
\,.
\end{align}
Moreover, we consider the domain $\Omega$ with the curved boundary illustrated in \cref{fig:benchmark_curved_domain}. In the following scenarios, $a = 0.1$ is used as the amplitude of the boundary perturbation.
The mapping from the reference unit-square to the perturbed domain is defined by $\varphi$ in \cref{eq:benchmark_mapping_perturbed}. To map the coefficient onto the perturbed domain, we replace the coefficient $K$ in \cref{eqn:strongform} by a new coefficient, $K_0$, induced by the domain transformation, \textit{viz.},
\begin{align}
\label{eq:benchmark_mapping_perturbed}
K_0 = \frac{D\varphi^{-1} (K\circ \varphi) \sspace D\varphi^{-\top}}{|\det{\left(D\varphi^{-1}\right)}|}
\,,
\qquad\text{where}\qquad
\varphi(x,y) &= \left[\begin{matrix}
x\\
\left(2ay - a\right) \sin^2{\left( 2\pi x\right)} + y
\end{matrix}\right]
\,.
\end{align}
The manufactured solution $u$ is chosen to be $u(x,y) = \sin(\varphi_1(x,y))\sinh(\varphi_2(x,y))$. The restriction of $u$ to the boundary is chosen as Dirichlet datum $g$ and the right-hand-side $f$ is directly computed by inserting $u$ into the strong form of the equation~\cref{eqn:strongform}.

Here, we perform the same verification as in the previous subsection.
That is, fixing $h$ and varying $q$ and $H$ with the same meshes and solver settings. In~\cref{tab:capitalhconvergence_tensor_h1,tab:capitalhconvergence_tensor_l2}, the relative $H^1$ and $L^2$ errors for decreasing mesh sizes $H$ are shown. Both tables show the expected convergence rates. In the case of $q=4$, the convergence rate deteriorates for small macro-mesh sizes $H$, because the discretization error is dominating.

Additionally, we present results for fixed $H = 2^{-3}H_0$ and varying $h$ and $q$. \Cref{tab:littlehconvergence_tensor_pullback_l2} shows the relative $L^2$ errors and convergence rates of the standard approach and the surrogate approach with $q \in \{3,5,7\}$. Only for $q = 7$, the $L^2$ error coincides with the errors from the standard approach for all $h$. The relative time-to-solution (rtts) shown for the surrogate approaches is defined as the time-to-solution (tts) including the setup-phase of the surrogate approach divided by the time-to-solution of the standard approach. In the case with the smallest $h$, the surrogate approach took at most only $7\%$ of the time of the standard approach.
That is, a speed-up by more than a factor of $14$.
\begin{figure}	\centering
	\includegraphics[width=0.6\linewidth]{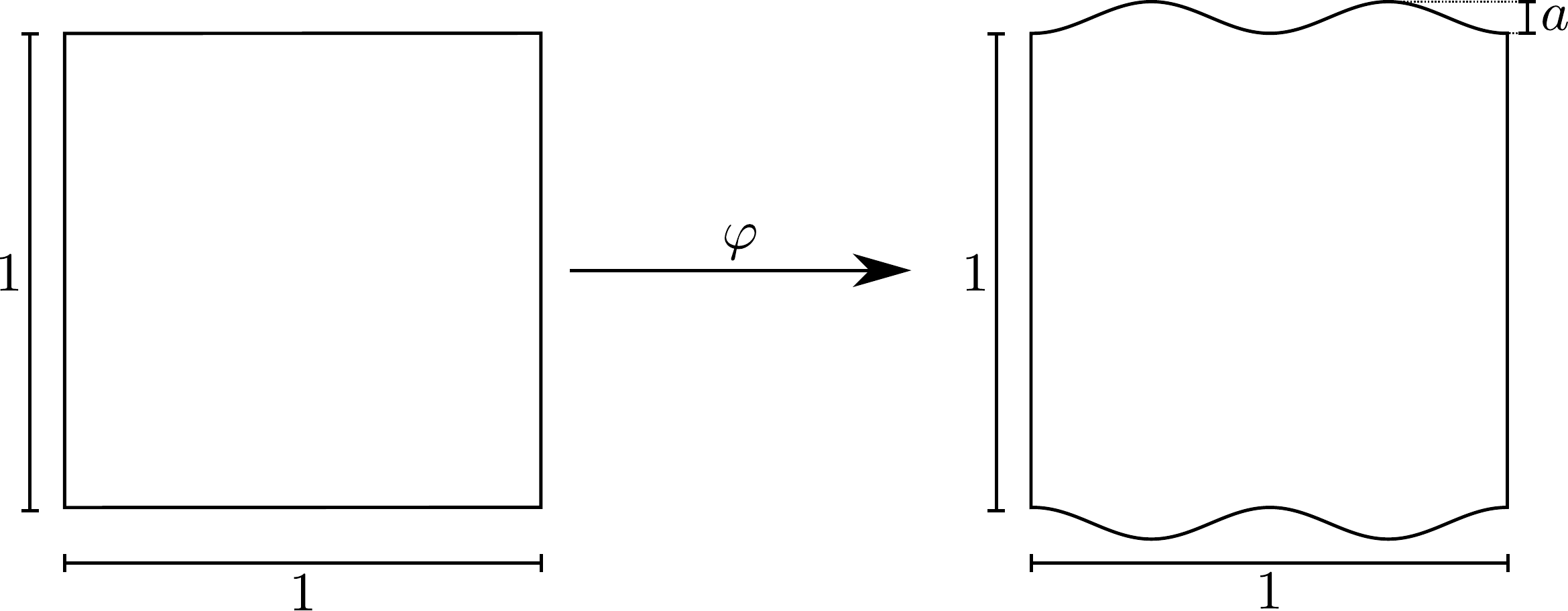}
	\caption{\label{fig:benchmark_curved_domain}Illustration of the mapping $\varphi$ from the unit-square to the perturbed unit-square. The top boundary is parametrized by $y = a\cdot\sin\left(2\pi x\right)^2 + 1$ and the bottom boundary by $y = -a\cdot\sin\left(2\pi x\right)^2$.}
\end{figure}

\begin{table}
	\centering
	\caption{\label{tab:capitalhconvergence_tensor_h1}Relative $H^1$ errors and experimental orders of convergence for fixed $h$ and varying $q$ and $H$ in the case of problem~\cref{eqn:strongform} with the tensorial coefficient~\cref{eqn:benchmark_tensor_coefficient} and curved boundary. The relative $H^1$ error with the classical FEM is \num{1.59e-08}.}
\resizebox{\linewidth}{!}{\begin{tabular}{r|c|c|c|c|c|c|c|c}	\toprule & \multicolumn{2}{c|}{$q = 1$} & \multicolumn{2}{c|}{$q = 2$} & \multicolumn{2}{c|}{$q = 3$} & \multicolumn{2}{c}{$q = 4$}\\
	\multicolumn{1}{c|}{$\frac{H}{H_0}$} & rel. $H^1$ err. & eoc & rel. $H^1$ err. & eoc & rel. $H^1$ err. & eoc & rel. $H^1$ err. & eoc \\\midrule
	\csvreader[late after line=\\,late after last line=\\\bottomrule,head to column names]{./results/benchmarks/results/tensor_h1.csv}{}
	{\capitalh & \qOne & \eocOne & \qTwo & \eocTwo & \qThree & \eocThree & \qFour & \eocFour}
\end{tabular}}
\end{table}

\begin{table}
\centering
\caption{\label{tab:capitalhconvergence_tensor_l2}Relative $L^2$ errors and experimental orders of convergence for fixed $h$ and varying $q$ and $H$ in the case of problem~\cref{eqn:strongform} with the tensorial coefficient~\cref{eqn:benchmark_tensor_coefficient} and curved boundary. The relative $L^2$ error with the classical FEM is \num{4.56e-09}.}
\resizebox{\linewidth}{!}{\begin{tabular}{r|c|c|c|c|c|c|c|c}	\toprule & \multicolumn{2}{c|}{$q = 1$} & \multicolumn{2}{c|}{$q = 2$} & \multicolumn{2}{c|}{$q = 3$} & \multicolumn{2}{c}{$q = 4$}\\
	\multicolumn{1}{c|}{$\frac{H}{H_0}$} & rel. $L^2$ err. & eoc & rel. $L^2$ err. & eoc & rel. $L^2$ err. & eoc & rel. $L^2$ err. & eoc \\\midrule
	\csvreader[late after line=\\,late after last line=\\\bottomrule,head to column names]{./results/benchmarks/results/tensor_l2.csv}{}
	{\capitalh & \qOne & \eocOne & \qTwo & \eocTwo & \qThree & \eocThree & \qFour & \eocFour}
\end{tabular}}
\end{table}

\begin{table}
\centering
\caption{\label{tab:littlehconvergence_tensor_pullback_l2}Relative $L^2$ errors, experimental orders of convergence, and relative time-to-solutions for fixed $H$ and varying $q$ and $h$ in the case of problem~\cref{eqn:strongform} with the tensorial coefficient~\cref{eqn:benchmark_tensor_coefficient} and curved boundary.}
\resizebox{\linewidth}{!}{\begin{tabular}{r|r|r|c|c|c|c|c|c|c|c|c|c|c}	\toprule & & & \multicolumn{2}{c|}{standard} & \multicolumn{3}{c|}{$q = 3$} & \multicolumn{3}{c|}{$q = 5$} & \multicolumn{3}{c}{$q = 7$}\\
	\multicolumn{1}{c|}{$\frac{h}{H_0}$} & \multicolumn{1}{c|}{$\frac{H_{\mathrm{LS}}}{h}$} & \multicolumn{1}{c|}{DoFs} & rel. $L^2$ err. & eoc & rel. $L^2$ err. & eoc & rtts & rel. $L^2$ err. & eoc & rtts & rel. $L^2$ err. & eoc & rtts \\\midrule
	\csvreader[late after line=\\,late after last line=\\\bottomrule,head to column names]{./results/benchmarks/results/tensor_comparison.csv}{}
	{\h & \HLS & \dofs & \ref & \eoc & \qthree & \eocthree & \relttsthree & \qfive & \eocfive & \relttsfive & \qseven & \eocseven & \relttsseven}
\end{tabular}}
\end{table}

\subsection{Linearized elasticity example}
In this subsection, we compare a standard method with a surrogate method applied to the linearized elasticity problem presented in \Cref{sub:linearized_elasticity}.
In our surrogate method, we employ the zero row sum property described in \Cref{sec:the_zero_row_sum_property}.
We choose an annular domain composed of two distinct and concentric materials under uniform pressure loading.
The problem is inspired by a similar 3D experiment documented in \cite{fuentes2017coupled}.
Let $\mathcal{B}_r \subset \mathbb{R}^{2}$ be the two-dimensional open ball of radius $r$ with the midpoint at the origin. The computational domain is then defined as $\Omega = \mathcal{B}_{R_{\mathrm{out}}} \setminus \mathcal{B}_{R_{\mathrm{in}}}$. We split this domain into two disjoint sets $\Omega_I = \mathcal{B}_{R_{\mathrm{mid}}} \setminus \mathcal{B}_{R_{\mathrm{in}}}$ and $\Omega_O = \mathcal{B}_{R_{\mathrm{out}}} \setminus \mathcal{B}_{R_{\mathrm{mid}}}$ corresponding to each material. In our experiments, we fix $R_{\mathrm{in}} = \SI{1}{\centi\meter}$, $R_{\mathrm{mid}} = \SI{1.75}{\centi\meter}$, and $R_{\mathrm{out}} = \SI{2}{\centi\meter}$. Refer to the leftmost diagram in \cref{fig:linear_elasticity_domain} for an illustration of the setup. Here, the macro-elements adjacent to the boundary and the material interface are mapped to the physical geometry by using the transformation described in \cite{zlamal1973curved}.

\begin{figure}\centering
\begin{minipage}{0.45\linewidth}
\centering
\includegraphics[width=\linewidth]{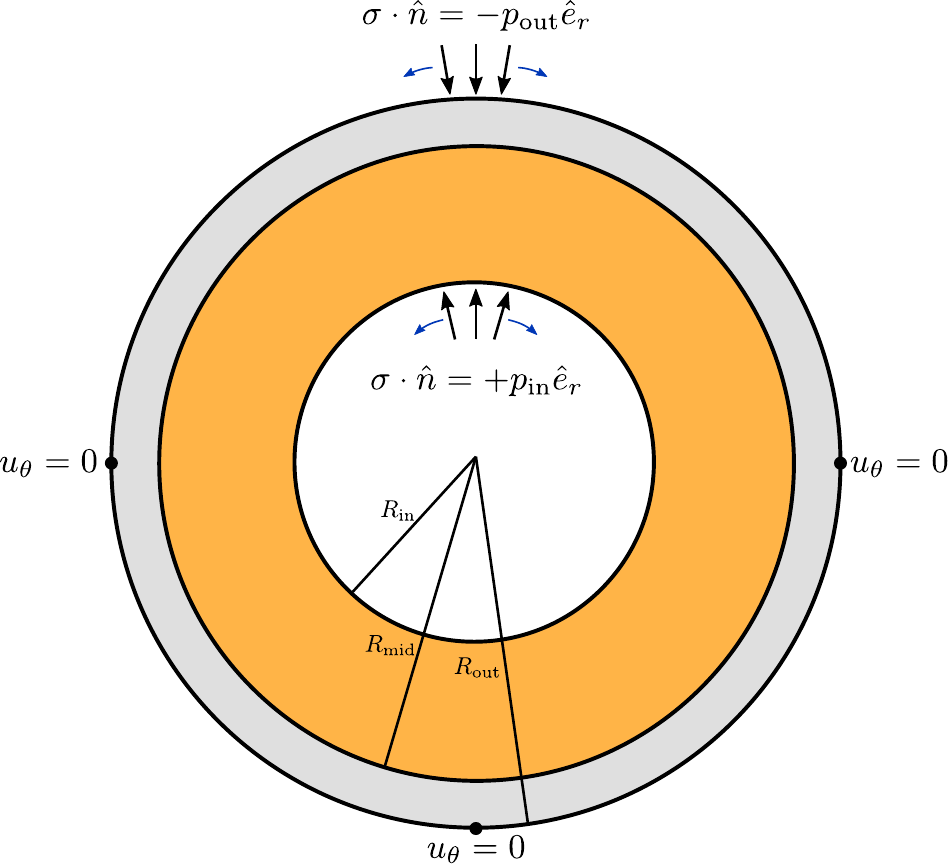}
\end{minipage}\hspace*{0.5em}
\begin{minipage}{0.45\linewidth}
\centering
\includegraphics[width=\linewidth]{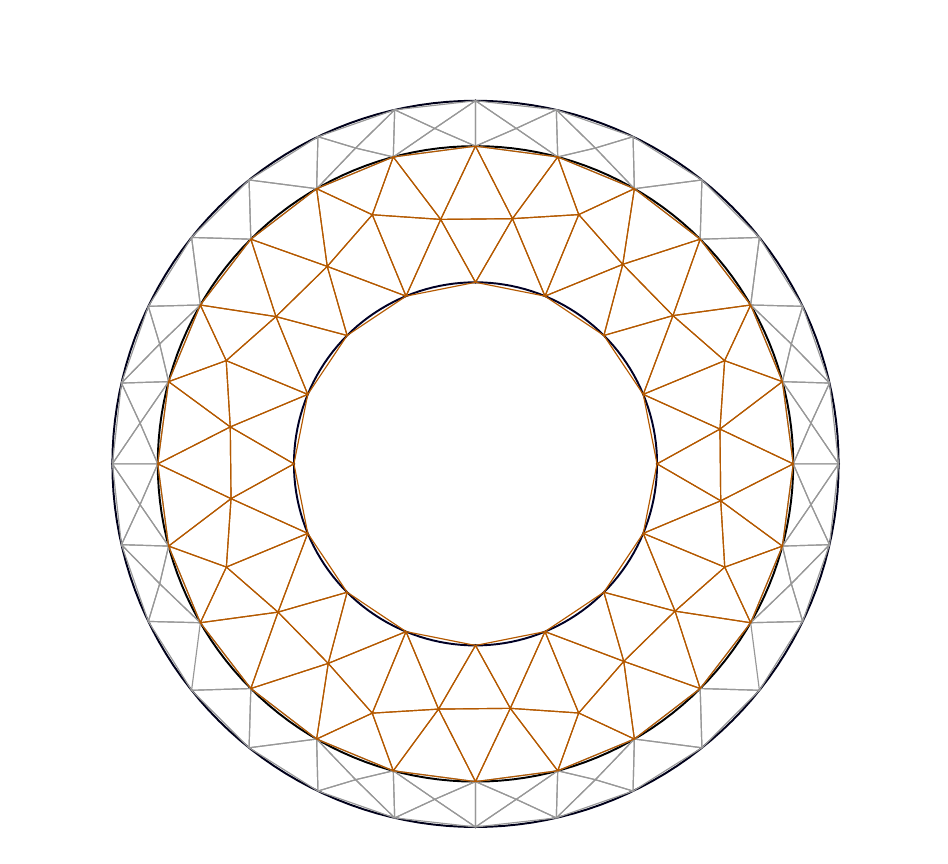}
\end{minipage}
\caption{\label{fig:linear_elasticity_domain}Linear elasticity problem setup (left) and initial macro-mesh $\mesh_H$ (right).}
\end{figure}

Let $r(x) = |x|$.
The strong form of the problem is
\begin{equation}
\begin{alignedat}{2}
-\mathrm{Div}\left(\sigma\right) &= \vec{f} &&\quad \text{in } \Omega,\\
u_{\theta} &= 0 &&\quad \text{on } \{(-R_{\mathrm{out}},0)^\top, (0,-R_{\mathrm{out}})^\top, (R_{\mathrm{out}},0)^\top\},\\
\sigma \cdot \hat{n} &= p_{\mathrm{in}} \hat{e}_r &&\quad \text{on }\{x \in \partial\Omega : r = R_{\mathrm{in}} \},\\
\sigma \cdot \hat{n} &= -p_{\mathrm{out}} \hat{e}_r &&\quad \text{on } \{x \in \partial\Omega : r = R_{\mathrm{out}} \}.
\end{alignedat}
\label{eqn:linearelasticity}
\end{equation}
Here, the stress tensor $\sigma$ is given by Hooke's law for isotropic materials as defined in \Cref{sub:linearized_elasticity}. The unit vector in radial direction is denoted $\hat{e}_r$ and the outward pointing unit normal vector is denoted $\hat{n}$. We neglect body forces and therefore set $\vec{f} = \vec{0}$. The displacement is described in polar coordinates where $u_r$ is the radial displacement and $u_\theta$ is the tangential displacement. In order to make the system uniquely solvable, we enforce the tangential displacement $u_{\theta}$ to be zero at three points; see \cref{fig:linear_elasticity_domain}.
The materials are chosen to be cork in the inner domain $\Omega_I$ with an A36 steel layer in the outer domain $\Omega_O$.
Poisson's ratio and Young's modulus for this scenario are $E_I = \SI{0.02}{\giga\pascal}$, $E_O = \SI{200.0}{\giga\pascal}$, $\nu_I = 0$, and $\nu_O = 0.26$. The Lam\'{e} parameters are obtained by the expressions $\mu = \frac{E}{2(1+\nu)}$ and $\lambda = \frac{E\nu}{(1-2\nu)\cdot(1+\nu)}$.
Note that while these expressions induce piecewise-constant Lam\'e parameters, $\lambda = \lambda(r)$ and $\mu = \mu(r)$, once the smooth domain is mapped to the computational domain depicted on the right of \cref{fig:linear_elasticity_domain}, these parameters will not be piecewise-constant anymore due to the transformation.
The pressure on the outer boundary is set to $p_{\mathrm{out}} = \SI{0}{\mega\pascal}$ and on the inner boundary $p_{\mathrm{in}} = \SI{1}{\mega\pascal}$ is prescribed.

In this particular scenario, there is an analytic solution available for the radial displacement $u_r$ which has the form
\begin{equation}
\label{eqn:linear_elasticity_analytic_solution}
\begin{aligned}
u_r(r) = \begin{cases} 
A \cdot r + B \cdot r^{-1} &\quad \text{if }r \in [R_{\mathrm{in}}, R_{\mathrm{mid}}]\,, \\
C \cdot r + D \cdot r^{-1} &\quad \text{if }r \in (R_{\mathrm{mid}}, R_{\mathrm{out}}]\,.
\end{cases}
\end{aligned}
\end{equation}
The tangential displacement $u_\theta$ is zero everywhere due to the symmetry of the problem.
In~\cref{eqn:linear_elasticity_analytic_solution}, the coefficients $A$, $B$, $C$, and $D$ are uniquely determined by the following system of linear equations:
\begin{equation}
\label{eqn:linear_elasticity_analytic_system}
\begin{aligned}
\begin{bmatrix}
E_{I}R_{\mathrm{in}}^{2} & E_{I} \left(2\nu_{I} - 1\right) & 0 & 0\\
0 & 0 & E_{O}R_{\mathrm{out}}^{2} & E_{O} \left(2\nu_{O}-1\right)\\
R_{\mathrm{mid}}^2 & 1 & -R_{\mathrm{mid}}^2 & -1\\
- E_{I} R_{\mathrm{mid}}^{2} d_{O} & -d_{O}  E_{I} \left(2\nu_{I} - 1\right) & E_{O} R_{\mathrm{mid}}^{2} d_{I} & d_{I} E_{O} \left(2\nu_{O} - 1\right)
\end{bmatrix} \begin{bmatrix}
A\\
B\\
C\\
D
\end{bmatrix} &= \begin{bmatrix}
p_{in} R_{\mathrm{in}}^{2} d_{I}\\
p_{out} R_{\mathrm{out}}^{2} d_{O}\\
0\\
0
\end{bmatrix},
\end{aligned}
\end{equation}
where $d_{I} \coloneqq 2 \nu_{I}^{2} + \nu_{I} - 1$ and $d_{O} \coloneqq 2 \nu_{O}^{2} + \nu_{O} - 1$.
This system is derived after deducing the continuity of the displacement and surface traction at the material interface, then by incorporating the prescribed external forces at the boundaries.
\begin{figure}\centering
\begin{minipage}{0.45\linewidth}
\centering
\includegraphics[width=\linewidth]{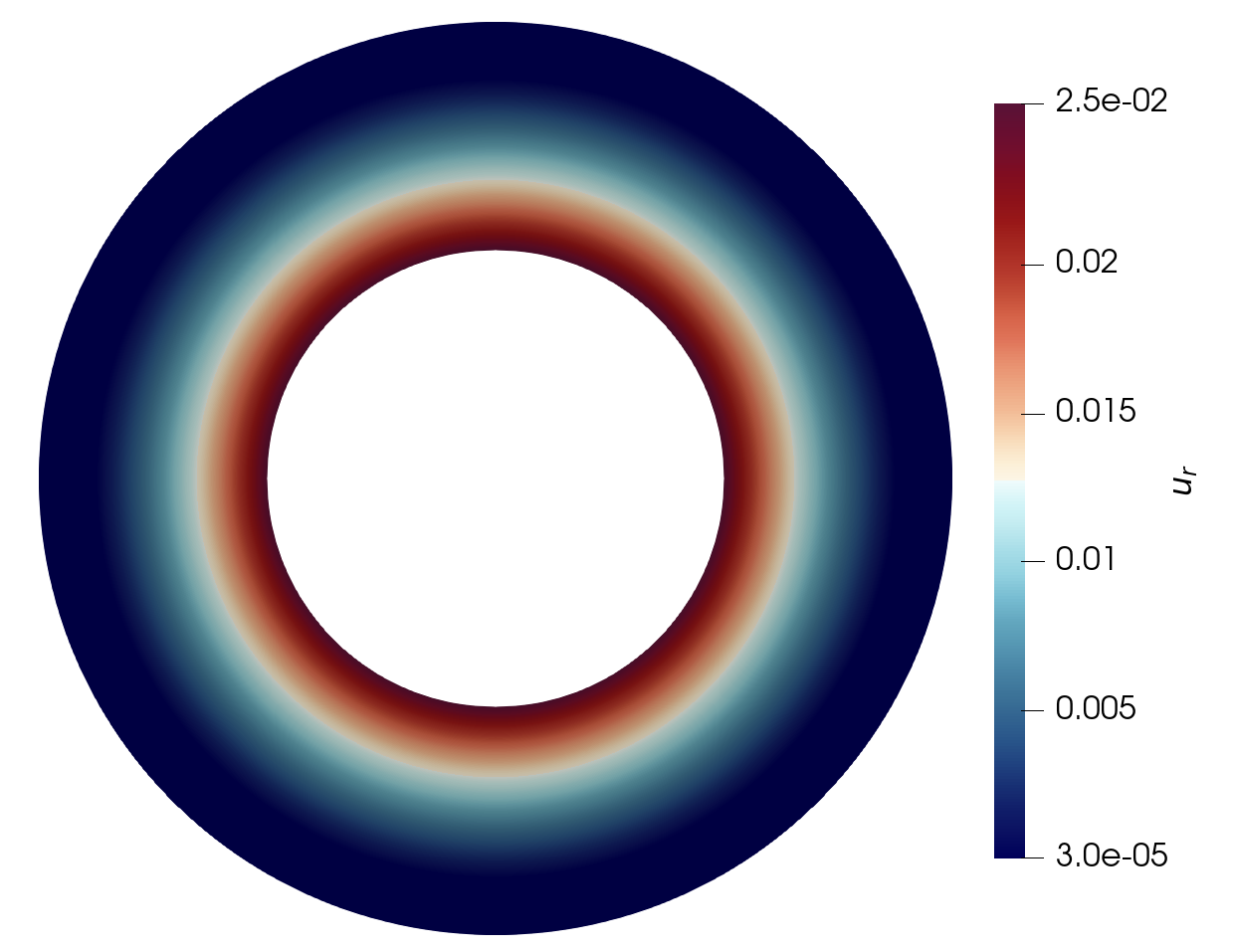}
\end{minipage}\hspace*{0.5em}
\begin{minipage}{0.45\linewidth}
\centering	
\includegraphics[width=\linewidth]{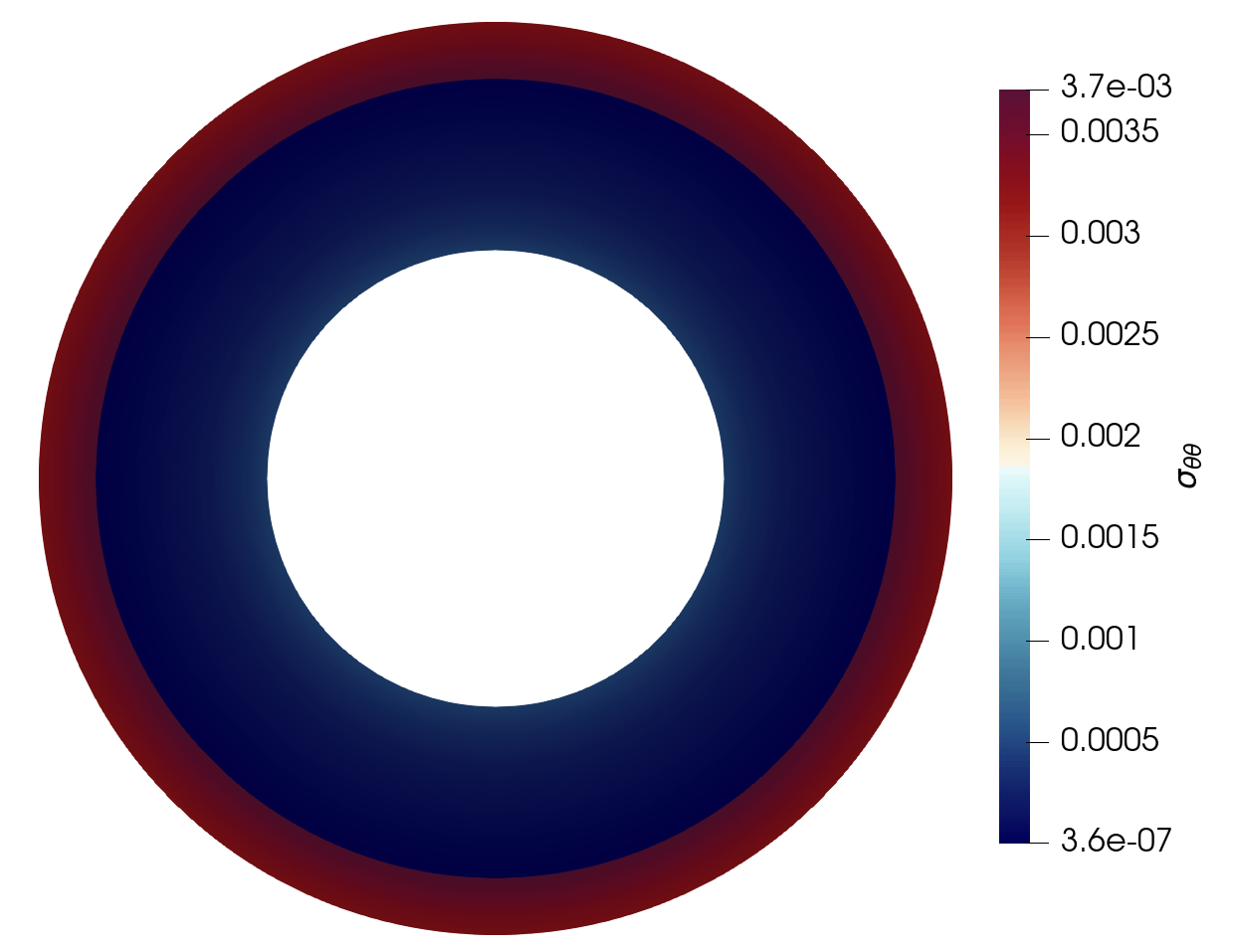}
\end{minipage}
\caption{\label{fig:linear_elasticity_results}Plots of radial displacement $u_r$ (left) and the tangential stress $\sigma_{\theta\theta}$ (right) computed on the fine mesh $\mcS^6(\mesh_H)$, corresponding to $h = 2^{-6}H$, with $q = 4$.}
\end{figure}
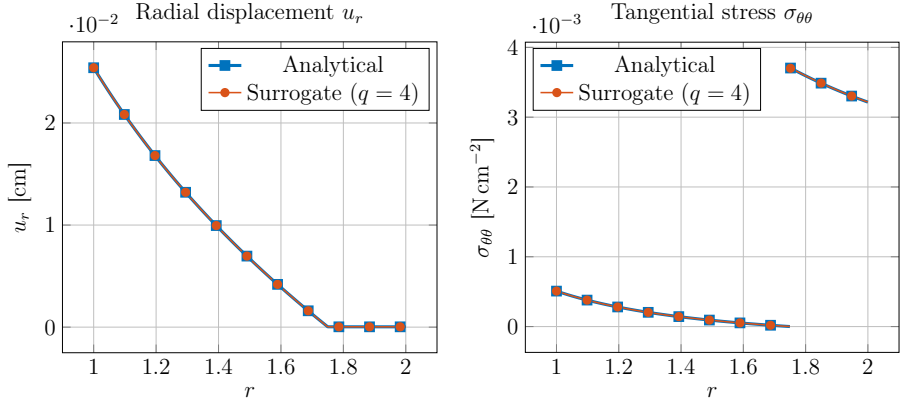
\begin{figure}\centering
\tikzset{font=\large}
\begin{minipage}{0.4\textwidth}
\begin{scaletikzpicturetowidth}{\textwidth}
\begin{tikzpicture}[scale=\tikzscale]
\begin{axis}[
xlabel={$r$},
ylabel={$u_r$ [$\si{\centi\meter}$]},
xmajorgrids,
ymajorgrids,
title={Radial displacement $u_r$},
mark repeat=49,
]
\addplot[color1, mark=square*, ultra thick] table [x index = {0}, y index={1}, col sep=comma] {./results/linearelasticity/plot.csv};
\addlegendentry{Analytical};
\addplot[color2, mark=*, thick] table [x index = {0}, y index={2}, col sep=comma] {./results/linearelasticity/plot.csv};
\addlegendentry{Surrogate ($q = 4$)};
\end{axis}
\end{tikzpicture}
\end{scaletikzpicturetowidth}
\end{minipage}\hspace*{1em}\begin{minipage}{0.4\textwidth}
\begin{scaletikzpicturetowidth}{\textwidth}
\begin{tikzpicture}[scale=\tikzscale]
\begin{axis}[
xlabel={$r$},
ylabel={$\sigma_{\theta\theta}$ [$\si{\newton\per\centi\meter\squared}$]},
xmajorgrids,
ymajorgrids,
title={Tangential stress $\sigma_{\theta\theta}$},
mark repeat=49,
legend style={at={(0.02,0.88)},anchor=west}
]
\addplot[color1, mark=square*, ultra thick, restrict x to domain=1:1.75] table [x index = {0}, y index={3}, col sep=comma] {./results/linearelasticity/plot.csv};
\addplot[color1, mark=square*, ultra thick, restrict x to domain=1.75:2, forget plot] table [x index = {0}, y index={3}, col sep=comma]
{./results/linearelasticity/plot.csv};
\addlegendentry{Analytical};
\addplot[color2, mark=*, thick] table [x index = {0}, y index={4}, col sep=comma, restrict x to domain=1:1.75] {./results/linearelasticity/plot.csv};
\addplot[color2, mark=*, thick] table [x index = {0}, y index={4}, col sep=comma, restrict x to domain=1.75:2, forget plot] {./results/linearelasticity/plot.csv};
\addlegendentry{Surrogate ($q = 4$)};
\end{axis}
\end{tikzpicture}
\end{scaletikzpicturetowidth}
\end{minipage}
\caption{\label{fig:linear_elasticity_lineplots}Plots over line of the radial displacement $u_r$ (left) and the tangential stress $\sigma_{\theta\theta}$ (right) computed on the fine mesh $\mcS^6(\mesh_H)$, corresponding to $h = 2^{-6}H$, with $q = 4$.}
\end{figure}

In order to verify the accuracy of the surrogate method, we select the polynomial degree $q=7$ and the macro-mesh $\mesh_H$, illustrated on the right of \cref{fig:linear_elasticity_domain}.
Note that the discontinuity in the material parameters lies along the macro-element interfaces and so $\lambda,\mu \in \prod_{T\in\mesh_H}C^\infty(T) \subsetneq W^{r+1,\infty}(\mesh_H)$, for any $r>0$.

Each linear system is solved by applying geometric multigrid iterations with V(3,3) cycles until the relative residual is reduced by the factor \num{1e-7}. On the coarsest level used in the multigrid hierarchy, we employ MUMPS as a direct solver. In \cref{tab:littlehconvergence_linearelasticity}, we report on the results for varying $h$ and present the relative $L^2$ errors for the standard and surrogate approach, respectively. On the finest mesh involving about \num{1.8e8} degrees of freedom, the surrogate approach required only 5\% of the time required by the standard approach while having the same accuracy.
That is a speed up by a factor of $20$.
\cref{fig:linear_elasticity_results} shows the radial displacement $u_r$ and the tangential stress $\sigma_{\theta\theta}$ computed with the surrogate approach on the fine mesh $\mcS^6(\mesh_H)$, corresponding to $h = 2^{-6}H$, with $q = 4$.
This is illustrated further by the plots in \cref{fig:linear_elasticity_lineplots} which allow a visual comparison between $u_r$ and $\sigma_{\theta\theta}$ in the surrogate and analytical solutions.
\begin{table}
	\centering
	\caption{\label{tab:littlehconvergence_linearelasticity}Relative $L^2$ errors, experimental orders of convergence, and relative time-to-solutions for fixed $H$, $q = 7$, and varying $h$ in the case of the linearized elasticity problem~\cref{eqn:linearelasticity}.}
			\begin{tabular}{r|r|r|c|c|c|c|c}			\toprule & & & \multicolumn{2}{c|}{standard} & \multicolumn{3}{c}{$q = 7$} \\
			\multicolumn{1}{c|}{$\frac{h}{H}$} & \multicolumn{1}{c|}{$\frac{H_{\mathrm{LS}}}{h}$} & \multicolumn{1}{c|}{DoFs} & rel. $L^2$ err. & eoc & rel. $L^2$ err. & eoc & rtts \\\midrule
			\csvreader[late after line=\\,late after last line=\\\bottomrule,head to column names]{./results/linearelasticity/results.csv}{}
			{\h & \HLS & \dofs & \ref & \eoc & \qseven & \eocseven & \relttsseven}
	\end{tabular}\end{table}

\subsection{$p$-Laplacian diffusion example}
\label{sec:plaplacian_diffusion_example}

In this subsection, we consider the time-dependent example introduced in \Cref{sub:p-Laplacian_diffusion}.
Here, we solve the non-linear $p$-Laplacian diffusion problem~\cref{eqn:nonlinearheatstrong}, given in strong form as
\begin{equation}
\begin{alignedat}{2}
\frac{\partial u}{\partial t} - \mathrm{div}\left(|\nabla u|^{p-2} \cdot u\right) &= f &&\quad \text{in } \Omega \times (0,T]\,,\\
u &= 0 &&\quad \text{on } \partial \Omega \times (0,T]\,,\\
u &= u_0 &&\quad \text{in } \Omega \times \{0\}\,.
\end{alignedat}
\label{eqn:nonlinearheatstrong}
\end{equation}
The computational domain is set to the unit disk, i.e., $\Omega \coloneqq \mathcal{B}_1$ and the right-hand-side is set to a specific constant, $f(x) = 2\hat{q}^{\nicefrac{p}{\hat{q}}}$, where $\hat{q} = \frac{p}{p-1}$. The initial solution is set to $u_0(x) = 0.1 \cdot \left(1 - |x|^2\right)$.
For this particular problem, the stationary limit $u_\infty$ has an analytic solution exists with unit magnitude, namely $u_\infty(x) = 1 - |x|^{\hat{q}}$ \cite[Example~3.1]{barrett1993finite}.

Our discretization follows a standard approach where a mass matrix $\sfM_{ij} = \int_\Omega \phi_i \phi_j\sspace \dd x$ and a stiffness matrix $\sfA_{ij}(\tilde{\sfu}) = \int_\Omega |\nabla \tilde{u}_h|^{p-2}\sspace \nabla\phi_j\cdot\nabla \phi_i\sspace \dd x$ are introduced.
At this point, $\tilde{\sfu}$ is the coefficient vector, in the $\{\phi_i\}$ basis, of an arbitrary discrete function $\tilde{u}_h$.
The time derivative is discretized by a backward Euler scheme, and the non-linearity in each time step is resolved by Picard fixed-point iterations.
Let $\sfu^{l}_k$ be the coefficient vector of the discrete solution at the $k$-th time step and $l$-th fixed point iteration.
Employing the bilinear form~\cref{eq:BilinearFormpLaplace} and fixing a time step size $dt>0$, the discrete problem in each time step $k>0$ and fixed point iteration $l>0$ reads as follows:
\begin{equation}
\Big(\sfM + dt\sspace \sfA\msspace\big(\sfu^{l-1}_k\big)\Big)\sspace\sfu^{l}_k = \sfM \sfu_{k-1} + dt\sspace \sfM \sff\,,
\label{eqn:nonlinearheatdiscrete}
\end{equation}
where $\sfu_{k-1}$ is the final coefficient vector from the previous time step.
In each time step, this system is solved multiple times (once for each fixed-point iteration) by the application of five V(2,2) multigrid cycles.
The fixed-point iterations continue until the relative increment $\frac{\|\sfu^l_k - \sfu^{l-1}_k\|_2}{\|\sfu^l_k\|_2}$ is smaller than the fixed tolerance \num{1e-3}.
Then $k$ is incremented and a new $\sfu_{k-1} = \sfu_{k-1}^l$ is defined.

In our surrogate method, the stencil functions of the stiffness matrix $\sfA\msspace\big(\sfu^{k-1}\big)$ are approximated by solving the least-squares problems after every fixed-point iteration, all the while enforcing the zero row sum property (cf. \Cref{sub:quantitativebenchmark}).
Meanwhile, the stencil function of the mass matrix $\sfM$ is only approximated once in a pre-processing step because it does not depend on any free variables in the computation.
The time step surrogate polynomials of both operators are then simply summed together to obtain the time step matrix $\sfM + dt\sspace \sfA\msspace\big(\sfu^{k-1}\big)$.
This particular splitting of the surrogate matrices, which reproduces the zero row sum property in the stiffness matrix, allows for faster re-approximation of the time step matrix stencil function and appears to improve the stability of the method.
In this example, we did not enforce the symmetry condition featured in~\Cref{sub:the_surrogate_stiffness_matrix}.
Instead, whenever a vertex $x_i\in\bbX_m$ was on the boundary of a macro-element $\bdry T_m$, we set the surrogate stiffness matrix to the exact value stiffness matrix $\tilde{\sfA}_{ij}= \sfA_{ij}$.
This minor asymmetry, is more amenable to computation because there is less data transfer and it did not affect the behavior of our multigrid solver.
In fact, this choice improved our results with this problem, which we believe is due to better accuracy in the surrogate near the singularity in the coefficient; i.e., at the origin $x=(0,0)$.
The success of this approach suggests that the definition given in~\cref{eq:SurrogateStiffnessMatrix} may be relaxed in other applications as well.\footnote{See \cite{bauer2017two} for further evidence.}
In the proximity of this singularity and for $p > 2$, the coefficient depending on the solution of the previous fixed-point iteration is getting very close to zero which serves as a challenge for the approximated off-diagonal stencil functions.
Depending on the polynomial degree $q$, they might erroneously take on positive values due to overshoots which possibly results in a loss of positive definiteness of the surrogate matrix.
However, this drawback could not be observed in the scenario considered in the following example.

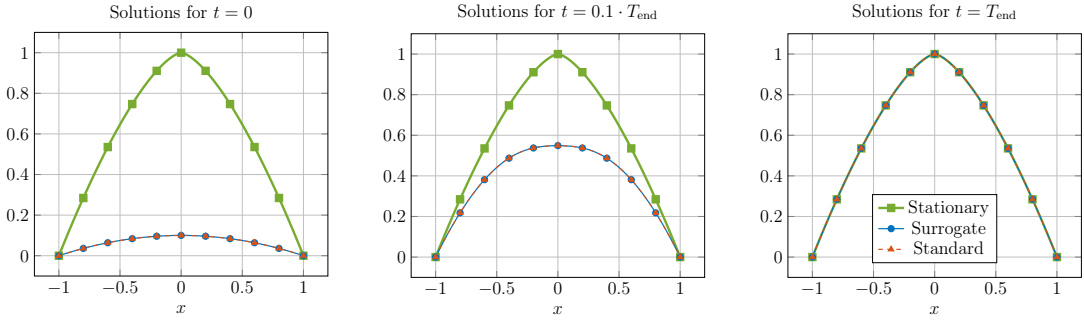
\begin{figure}\centering
\tikzset{font=\large}
\begin{minipage}{0.3\textwidth}
\begin{scaletikzpicturetowidth}{\textwidth}
\begin{tikzpicture}[scale=\tikzscale]
\begin{axis}[
xlabel={$x$},
xmajorgrids,
ylabel={},
ymajorgrids,
ymin=-0.1,
ymax=1.1,
title={Solutions for $t = 0$},
mark repeat=10,
]
\addplot[color5,mark=square*,ultra thick] table [x index = {4}, y index={1}, col sep=comma] {./results/nonlinearheatequation/pthree_surrogate.0.csv};
\addplot[color1,mark=*,thick] table [x index = {4}, y index={0}, col sep=comma] {./results/nonlinearheatequation/pthree_surrogate.0.csv};
\addplot[color2,mark=triangle*,thick,dashed,mark options={solid}] table [x index = {4}, y index={0}, col sep=comma] {./results/nonlinearheatequation/pthree_standard.0.csv};
\end{axis}
\end{tikzpicture}
\end{scaletikzpicturetowidth}
\end{minipage}\hfill
\begin{minipage}{0.3\textwidth}
\begin{scaletikzpicturetowidth}{\textwidth}
\begin{tikzpicture}[scale=\tikzscale]
\begin{axis}[
xlabel={$x$},
xmajorgrids,
ylabel={},
ymajorgrids,
ymin=-0.1,
ymax=1.1,
title={Solutions for $t = 0.1 \cdot T_{\mathrm{end}}$},
mark repeat=10,
]
\addplot[color5,mark=square*,ultra thick] table [x index = {4}, y index={1}, col sep=comma] {./results/nonlinearheatequation/pthree_surrogate.10.csv};
\addplot[color1,mark=*,thick] table [x index = {4}, y index={0}, col sep=comma] {./results/nonlinearheatequation/pthree_surrogate.10.csv};
\addplot[color2,mark=triangle*,thick,dashed,mark options={solid}] table [x index = {4}, y index={0}, col sep=comma] {./results/nonlinearheatequation/pthree_standard.10.csv};
\end{axis}
\end{tikzpicture}
\end{scaletikzpicturetowidth}
\end{minipage}\hfill
\begin{minipage}{0.3\textwidth}
\begin{scaletikzpicturetowidth}{\textwidth}
\begin{tikzpicture}[scale=\tikzscale]
\begin{axis}[
xlabel={$x$},
xmajorgrids,
ylabel={},
ymajorgrids,
ymin=-0.1,
ymax=1.1,
title={Solutions for $t = T_{\mathrm{end}}$},
mark repeat=10,
legend style={at={(0.5,0.2)},anchor=center}
]
\addplot[color5,mark=square*,ultra thick] table [x index = {4}, y index={1}, col sep=comma] {./results/nonlinearheatequation/pthree_surrogate.100.csv};
\addlegendentry{Stationary};
\addplot[color1,mark=*,thick] table [x index = {4}, y index={0}, col sep=comma] {./results/nonlinearheatequation/pthree_surrogate.100.csv};
\addlegendentry{Surrogate};
\addplot[color2,mark=triangle*,thick,dashed,mark options={solid}] table [x index = {4}, y index={0}, col sep=comma] {./results/nonlinearheatequation/pthree_standard.100.csv};
\addlegendentry{Standard};
\end{axis}
\end{tikzpicture}
\end{scaletikzpicturetowidth}
\end{minipage}
\caption{\label{fig:nonlinear_pthree} Plots of standard, surrogate and stationary analytical solution over the line $[0,1] \times \{0\}$ for different times $t = 0$, $t = 0.1 \cdot T$, and $t = T_{\mathrm{end}}$.}
\end{figure}
\begin{figure}\centering
\begin{minipage}{0.5\textwidth}
\includegraphics[width=\textwidth]{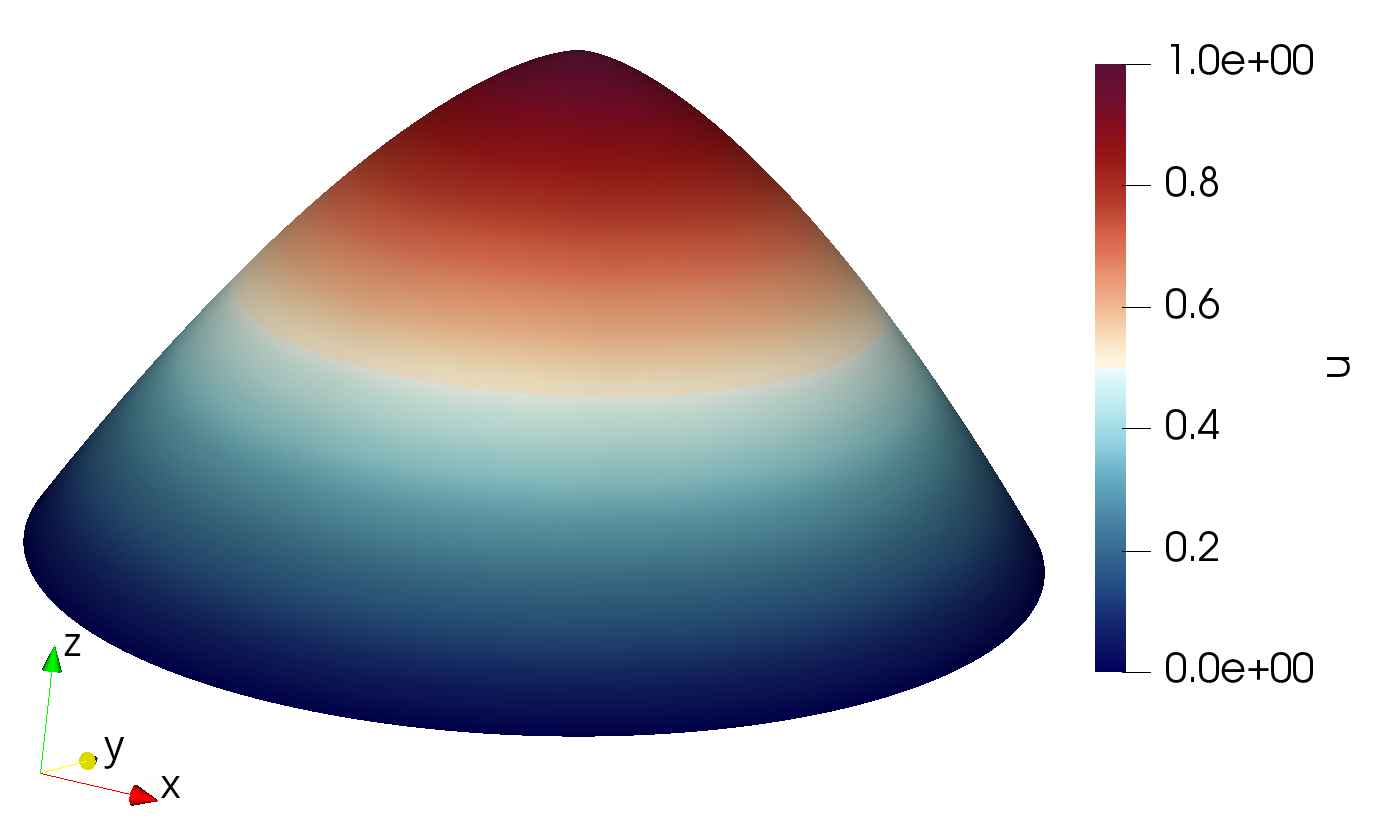}
\end{minipage}\begin{minipage}{0.5\textwidth}
	\includegraphics[width=\textwidth]{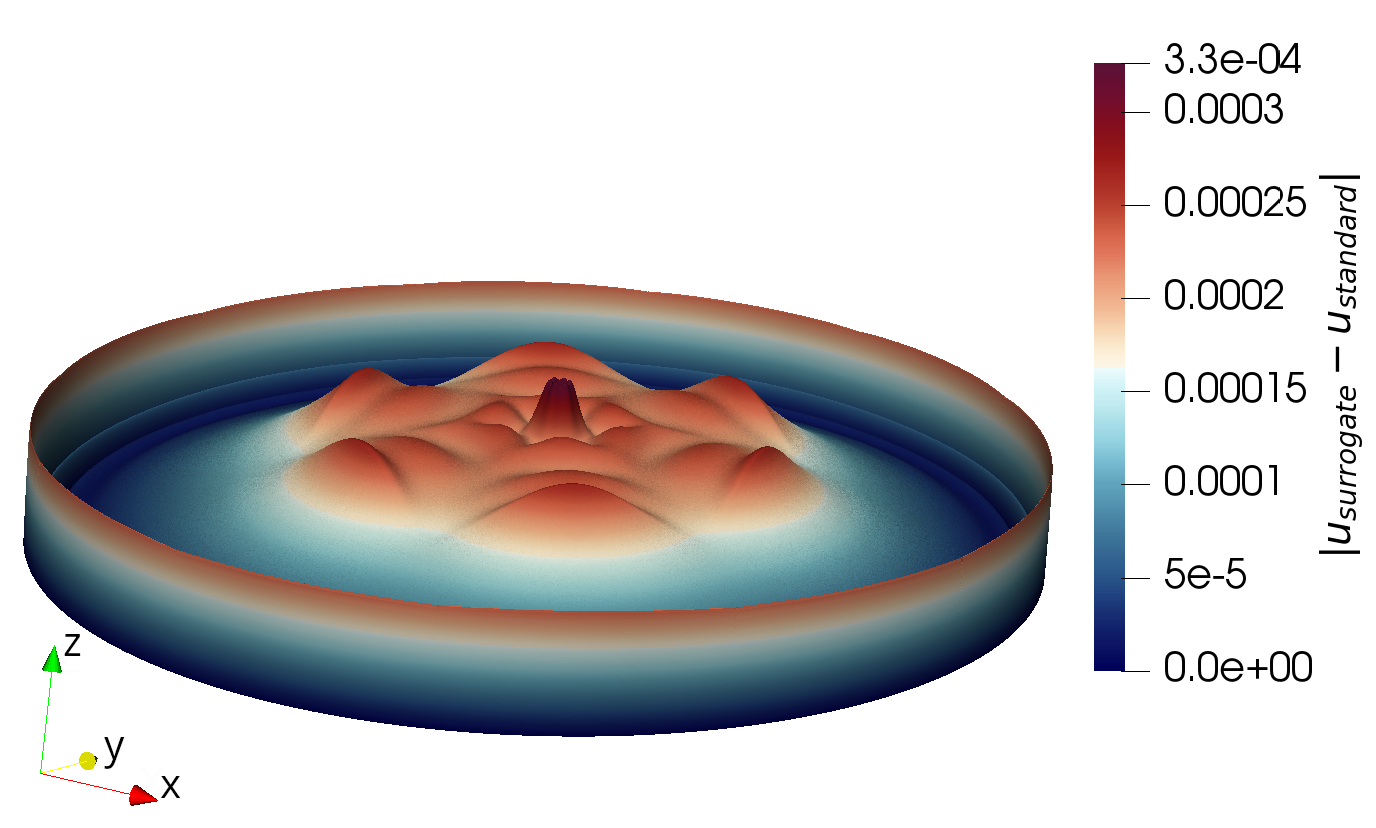}
\end{minipage}
\caption{\label{fig:nonlinear_surfaceplots}Surface plot of non-stationary $p$-Laplacian surrogate solution for $t = T_{\mathrm{end}}$ with $p=3$ and $q=6$ (left). Absolute difference between the discrete standard and surrogate solution at the time $t = T_{\mathrm{end}}$ (right).}
\end{figure}

The unit-disk is discretized by the macro-mesh $\mesh_H$ featured on the right of \cref{fig:stencilweight_surfaceplot}.
Note that the vertices of the central macro-elements meet at the origin $x=(0,0)$; i.e., exactly where the singularity occurs in the stationary limit $u_\infty$.
The simulations are conducted on the mesh $\mcS^9(\mesh_H)$, which involves about \num{4.72e6} degrees of freedom.
The macro-elements adjacent to the boundary are mapped to the physical geometry by using the mapping described in \cite{zlamal1973curved}. Furthermore, the least-squares regressions are carried out on the mesh corresponding to $H_{\mathrm{LS}} = 4h$ and the polynomial degree of the approximated stencil functions is fixed to $q = 6$.
In this scenario, we consider the $p$-Laplacian operator with $p = 3$, fix the time step size $\Delta t = \num{1e-2}$, and solve until time $T_{\mathrm{end}} = 1$.
\Cref{fig:nonlinear_pthree} illustrates the standard and surrogate solutions plotted over the line $[0,1] \times \{0\}$ for different times $t$. In the left of \cref{fig:nonlinear_surfaceplots}, the surface plot of the surrogate solution is depicted. Since the difference of the solutions is very small, we added in the right of \cref{fig:nonlinear_surfaceplots} a surface plot of the absolute difference of the surrogate and standard solution at the final time $t = T_{\mathrm{end}}$.
The surrogate approach required only $5.4$\% of the time required by the standard approach.
That is, a speed-up by more than a factor of $18$.

\appendix
\section{Proofs} \label{app:appendix}

\begin{proof}[Proof of \Cref{prop:SpectralConvergence}]
By the min-max theorem \cite{courant1953methods}, the $k$-th eigenvalue of $\sfM$ is
\begin{align}
	\lambda_k(\sfM)
	&=
	\min_{W\subset \R^N}
	\Bigg\{
		\max_{\|\sfx\|_2 = 1}
		\Big\{
			\sfx^\top \sfM\sfx : \sfx \in W
		\Big\}
		:
		\mathrm{dim}\,W=k
	\Bigg\}
	\\
	&=
	\max_{W\subset \R^N}
	\Bigg\{
		\min_{\|\sfx\|_2 = 1}
		\Big\{
			\sfx^\top \sfM\sfx : \sfx \in W
		\Big\}
		:
		\mathrm{dim}\,W=N-k+1
	\Bigg\}
	\,.
\end{align}

Define $\sfD = \sfM - \sfN$.
We first show that $\lambda_1(\sfD) \leq  \lambda_k(\sfM) - \lambda_k(\sfN) \leq \lambda_N(\sfD)$.
Indeed,
\begin{align}
	\lambda_k(\sfM)
												&\leq
	\min_{W\subset \R^N}
	\Bigg\{
		\max_{\|\sfx\|_2 = 1}
		\Big\{
			\sfx^\top \sfN\sfx : \sfx \in W
		\Big\}
		+
		\max_{\|\sfx\|_2 = 1}
		\Big\{
			\sfx^\top \sfD\sfx : \sfx \in W
		\Big\}
		:
		\mathrm{dim}\,W=k
	\Bigg\}
	\\
	&\leq
	\min_{W\subset \R^N}
	\Bigg\{
		\max_{\|\sfx\|_2 = 1}
		\Big\{
			\sfx^\top \sfN\sfx : \sfx \in W
		\Big\}
		:
		\mathrm{dim}\,W=k
	\Bigg\}
	+
	\max_{\|\sfx\|_2 = 1}
	\Big\{
		\sfx^\top \sfD\sfx : \sfx \in \R^N
	\Big\}
	\\
	&=
	\lambda_k(\sfN) + \lambda_N(\sfD)
\end{align}
and, likewise,
\begin{align}
	\lambda_k(\sfM)
												&\geq
	\max_{W\subset \R^N}
	\Bigg\{
		\min_{\|\sfx\|_2 = 1}
		\Big\{
			\sfx^\top \sfN\sfx : \sfx \in W
		\Big\}
												+
		\min_{\|\sfx\|_2 = 1}
		\Big\{
			\sfx^\top \sfD\sfx : \sfx \in W
		\Big\}
		:
		\mathrm{dim}\,W=N-k+1
	\Bigg\}
	\\
	&\geq
	\max_{W\subset \R^N}
	\Bigg\{
		\min_{\|\sfx\|_2 = 1}
		\Big\{
			\sfx^\top \sfN\sfx : \sfx \in W
		\Big\}
		:
		\mathrm{dim}\,W=N-k+1
	\Bigg\}
						+
	\min_{\|\sfx\|_2 = 1}
	\Big\{
		\sfx^\top \sfD\sfx : \sfx \in \R^N
	\Big\}
	\\
	&=
	\lambda_k(\sfN) + \lambda_1(\sfD)
		\,.
\end{align}
This immediately leads us to the inequality $|\lambda_k(\sfM) - \lambda_k(\sfN)| \leq \max\{|\lambda_1(\sfD)|,|\lambda_N(\sfD)|\}$.
Now, for at least one $i$, $|\lambda_N(\sfD)| - |\sfD_{ii}| \leq |\lambda_N(\sfD) - \sfD_{ii}| \leq \sum_{j\neq i} |\sfD_{ij}|$, by the Gershgorin circle theorem.
Therefore, $|\lambda_N(\sfD)| \leq \sum_{j} |\sfD_{ij}| \leq \|\sfD\|_{\infty}$.
Similarly, $|\lambda_1(\sfD)| \leq \|\sfD\|_{\infty}$.
\end{proof}

\section*{Acknowledgments}
This project has received funding from the European Union's Horizon 2020 research and innovation programme under grant agreement No 800898.
This work was also partly supported by the German Research Foundation through
the Priority Programme 1648 "Software for Exascale Computing" (SPPEXA) and by grant WO671/11-1.
The authors gratefully acknowledge the Gauss Centre for Supercomputing e.V. (GCS, \href{www.gauss-centre.eu}{www.gauss-centre.eu}) for funding this project by providing computing time on the GCS supercomputer SuperMUC at Leibniz Supercomputing Centre (LRZ, \href{www.lrz.de}{www.lrz.de}).

\phantomsection\bibliographystyle{siam}
\bibliography{main}

\begin{thebibliography}{10}

\bibitem{MUMPS01}
{\sc P.~R. Amestoy, I.~S. Duff, J.-Y. L'Excellent, and J.~Koster}, {\em A fully
  asynchronous multifrontal solver using distributed dynamic scheduling}, SIAM
  Journal on Matrix Analysis and Applications, 23 (2001), pp.~15--41.

\bibitem{MUMPS02}
{\sc P.~R. Amestoy, A.~Guermouche, J.-Y. L'Excellent, and S.~Pralet}, {\em
  Hybrid scheduling for the parallel solution of linear systems}, Parallel
  Computing, 32 (2006), pp.~136--156.

\bibitem{Arbenz:2008:IJMME}
{\sc P.~Arbenz, G.~H. van Lenthe, U.~Mennel, R.~M{\"u}ller, and M.~Sala}, {\em
  A scalable multi-level preconditioner for matrix-free $\mu$-finite element
  analysis of human bone structures}, International Journal for Numerical
  Methods in Engineering, 73 (2008), pp.~927--947.

\bibitem{petsc-user-ref}
{\sc S.~Balay, S.~Abhyankar, M.~F. Adams, J.~Brown, P.~Brune, K.~Buschelman,
  L.~Dalcin, V.~Eijkhout, W.~D. Gropp, D.~Kaushik, M.~G. Knepley, D.~A. May,
  L.~C. McInnes, R.~T. Mills, T.~Munson, K.~Rupp, P.~Sanan, B.~F. Smith,
  S.~Zampini, H.~Zhang, and H.~Zhang}, {\em {PETS}c users manual}, Tech. Rep.
  ANL-95/11 - Revision 3.9, Argonne National Laboratory, 2018.

\bibitem{petsc-efficient}
{\sc S.~Balay, W.~D. Gropp, L.~C. McInnes, and B.~F. Smith}, {\em Efficient
  management of parallelism in object oriented numerical software libraries},
  in Modern Software Tools in Scientific Computing, E.~Arge, A.~M. Bruaset, and
  H.~P. Langtangen, eds., Birkh{\"{a}}user Press, 1997, pp.~163--202.

\bibitem{barrett1993finite}
{\sc J.~W. Barrett and W.~B. Liu}, {\em Finite element approximation of the
  $p$-{L}aplacian}, Mathematics of computation, 61 (1993), pp.~523--537.

\bibitem{bauer2017stencil}
{\sc S.~Bauer, D.~Drzisga, M.~Mohr, U.~Ruede, C.~Waluga, and B.~Wohlmuth}, {\em
  A stencil scaling approach for accelerating matrix-free finite element
  implementations}, arXiv preprint arXiv:1709.06793,  (2017).

\bibitem{bauer2018large}
{\sc S.~Bauer, M.~Huber, S.~Ghelichkhan, M.~Mohr, U.~R{\"u}de, and
  B.~Wohlmuth}, {\em Large-scale simulation of mantle convection based on a new
  matrix-free approach}, Preprint,  (2018).

\bibitem{bauer2018new}
{\sc S.~Bauer, M.~Huber, M.~Mohr, U.~R{\"u}de, and B.~Wohlmuth}, {\em A new
  matrix-free approach for large-scale geodynamic simulations and its
  performance}, in International Conference on Computational Science, Springer,
  2018, pp.~17--30.

\bibitem{bauer2017two}
{\sc S.~Bauer, M.~Mohr, U.~R{\"u}de, J.~Weism{\"u}ller, M.~Wittmann, and
  B.~Wohlmuth}, {\em A two-scale approach for efficient on-the-fly operator
  assembly in massively parallel high performance multigrid codes}, Applied
  Numerical Mathematics, 122 (2017), pp.~14--38.

\bibitem{bergen2005hierarchical}
{\sc B.~Bergen}, {\em Hierarchical Hybrid Grids: {D}ata Structures and Core
  Algorithms for Efficient Finite Element Simulations on Supercomputers:
  Hierarchische Hybride Gitter: Datenstrukturen und Algorithmen Zur Effizienten
  Simulation Mit Finiten Elementen Auf H{\"o}chstleistungsrechnern}, SCS
  Publishing House, 2005.

\bibitem{bergen2004hierarchical}
{\sc B.~Bergen and F.~H{\"u}lsemann}, {\em Hierarchical hybrid grids: {D}ata
  structures and core algorithms for multigrid}, Numerical linear algebra with
  applications, 11 (2004), pp.~279--291.

\bibitem{bergen2007hierarchical}
{\sc B.~Bergen, G.~Wellein, F.~H{\"u}lsemann, and U.~R{\"u}de}, {\em
  Hierarchical hybrid grids: {A}chieving {TERAFLOP} performance on large scale
  finite element simulations}, International Journal of Parallel, Emergent and
  Distributed Systems, 22 (2007), pp.~311--329.

\bibitem{bey1995tetrahedral}
{\sc J.~Bey}, {\em Tetrahedral grid refinement}, Computing, 55 (1995),
  pp.~355--378.

\bibitem{Bielak:2005:CMES}
{\sc J.~Bielak, O.~Ghattas, and E.-J. Kim}, {\em Parallel {O}ctree-{B}ased
  {F}inite {E}lement {M}ethod for {L}arge-{S}cale {E}arthquake {G}round
  {M}otion {S}imulation}, Computer Modeling in Engineering \& Sciences, 10
  (2005), pp.~99--112.

\bibitem{brenner2007mathematical}
{\sc S.~Brenner and R.~Scott}, {\em The mathematical theory of finite element
  methods}, vol.~15, Springer Science \& Business Media, 2007.

\bibitem{Brown:2010:JSC}
{\sc J.~Brown}, {\em Efficient {N}onlinear {S}olvers for {N}odal {H}igh-{O}rder
  {F}inite {E}lements in {3D}}, J.~Scientific Computing, 45 (2010), pp.~48--63.

\bibitem{RichardL.Burden2015Numerical}
{\sc R.~L. Burden and J.~D. Faires}, {\em Numerical Analysis}, Cengage
  Learning, Inc, 2015.

\bibitem{Carey:1986:CANM}
{\sc G.~F. Carey and B.-N. Jiang}, {\em Element-by-element linear and nonlinear
  solution schemes}, Communications in Applied Numerical Methods, 2 (1986),
  pp.~145--153.

\bibitem{ciarlet1988three}
{\sc P.~G. Ciarlet}, {\em Three-dimensional elasticity}, vol.~20, Elsevier,
  1988.

\bibitem{courant1953methods}
{\sc R.~Courant and D.~Hilbert}, {\em Methods of mathematical physics, Volume
  1}, 1953.

\bibitem{Engwer:2017:CCPE}
{\sc C.~Engwer, R.~D. Falgout, and U.~M. Yang}, {\em Stencil computations for
  {PDE}-based applications with examples from {DUNE} and {Hypre}}, Concurrency
  and Computation: Practice and Experience,  (2017), p.~13.

\bibitem{ern2013theory}
{\sc A.~Ern and J.-L. Guermond}, {\em Theory and practice of finite elements},
  vol.~159, Springer Science \& Business Media, 2013.

\bibitem{Flaig:2012:LSSC}
{\sc C.~Flaig and P.~Arbenz}, {\em A {H}ighly {S}calable {M}atrix-{F}ree
  {M}ultigrid {S}olver for $\mu${FE} {A}nalysis {B}ased on a {P}ointer-{L}ess
  {O}ctree}, in Large-Scale Scientific Computing: 8th International Conference,
  LSSC 2011, Sozopol, Bulgaria, June 6-10, 2011, Revised Selected Papers,
  I.~Lirkov, S.~Margenov, and J.~Wa{\'{s}}niewski, eds., Springer Berlin
  Heidelberg, 2012, pp.~498--506.

\bibitem{fuentes2017coupled}
{\sc F.~Fuentes, B.~Keith, L.~Demkowicz, and P.~Le~Tallec}, {\em Coupled
  variational formulations of linear elasticity and the {DPG} methodology},
  Journal of Computational Physics, 348 (2017), pp.~715--731.

\bibitem{gmeiner2015towards}
{\sc B.~Gmeiner, U.~R{\"u}de, H.~Stengel, C.~Waluga, and B.~Wohlmuth}, {\em
  Towards textbook efficiency for parallel multigrid}, Numer. Math. Theor.
  Meth. Appl., 8 (2015), pp.~22--46.

\bibitem{grisvard2011elliptic}
{\sc P.~Grisvard}, {\em Elliptic problems in nonsmooth domains}, Pitman
  Advanced Publishing Program, 1985.

\bibitem{eigenweb}
{\sc G.~Guennebaud, B.~Jacob, et~al.}, {\em Eigen v3}.
\newblock http://eigen.tuxfamily.org, 2010.

\bibitem{Kohl2018HyTeGfinite}
{\sc N.~Kohl, D.~Th{\"o}nnes, D.~Drzisga, D.~Bartuschat, and U.~R{\"u}de}, {\em
  The {HyTeG}~finite-element software framework for scalable multigrid
  solvers}, International Journal of Parallel, Emergent and Distributed
  Systems,  (2018), pp.~1--20.

\bibitem{Kronbichler:2012:CAF}
{\sc M.~Kronbichler and K.~Kormann}, {\em A generic interface for parallel
  cell-based finite element operator application}, Computers and Fluids, 63
  (2012), pp.~135--147.

\bibitem{Ljungkvist:2017:HPC17}
{\sc K.~Ljungkvist}, {\em Matrix-free {F}inite-element {C}omputations on
  {G}raphics {P}rocessors with {A}daptively {R}efined {U}nstructured {M}eshes},
  in Proceedings of the 25th High Performance Computing Symposium, HPC '17,
  Society for Computer Simulation International, 2017, pp.~1:1--1:12.

\bibitem{Ljungkvist:2017:TechRep}
{\sc K.~Ljungkvist and M.~Kronbichler}, {\em Multigrid for {M}atrix-{F}ree
  {F}inite {E}lement {C}omputations on {G}raphics {P}rocessors}, Tech. Rep.
  2017-006, Department of Information Technology, Uppsala University, 2017.

\bibitem{loffeld2017arithmetic}
{\sc J.~Loffeld and J.~Hittinger}, {\em On the arithmetic intensity of
  high-order finite-volume discretizations for hyperbolic systems of
  conservation laws}, The International Journal of High Performance Computing
  Applications,  (2017).

\bibitem{May:2015:CMAME}
{\sc D.~A. May, J.~Brown, and L.~L. Pourhiet}, {\em A scalable, matrix-free
  multigrid preconditioner for finite element discretizations of heterogeneous
  {S}tokes flow}, Computer Methods in Applied Mechanics and Engineering, 290
  (2015), pp.~496--523.

\bibitem{MaySananRuppKnepleySmith2016}
{\sc D.~A. May, P.~Sanan, K.~Rupp, M.~G. Knepley, and B.~F. Smith}, {\em
  Extreme-scale multigrid components within {PETS}c}, in Proceedings of the
  Platform for Advanced Scientific Computing Conference, PASC '16, New York,
  NY, USA, 2016, ACM, pp.~5:1--5:12.

\bibitem{scott1990finite}
{\sc L.~R. Scott and S.~Zhang}, {\em Finite element interpolation of nonsmooth
  functions satisfying boundary conditions}, Mathematics of Computation, 54
  (1990), pp.~483--493.

\bibitem{strang1972variational}
{\sc G.~Strang}, {\em Variational crimes in the finite element method}, in The
  mathematical foundations of the finite element method with applications to
  partial differential equations, Elsevier, 1972, pp.~689--710.

\bibitem{Rietbergen:1996:IJMME}
{\sc B.~van Rietbergen, H.~Weinans, R.~Huiskes, and B.~Polman}, {\em
  Computational strategies for iterative solutions of large {FEM} applications
  employing voxel data}, International Journal for Numerical Methods in
  Engineering, 39 (1996), pp.~2743--2767.

\bibitem{zlamal1973curved}
{\sc M.~Zl{\'a}mal}, {\em Curved elements in the finite element method. {I}},
  SIAM Journal on Numerical Analysis, 10 (1973), pp.~229--240.

\end{thebibliography}

\end{document}